\documentclass[11pt,reqno,twoside,a4paper]{article}
\usepackage{fullpage}
\usepackage{mysty}

\title{Stable Phase Retrieval with Mirror Descent}

\author{Jean-Jacques Godeme\thanks{Normandie Univ, ENSICAEN, CNRS, GREYC, France. e-mail: \texttt{jean-jacques.godeme@unicaen.fr, Jalal.Fadili@ensicaen.fr}.} \and Jalal Fadili\footnotemark[1] \and Claude Amra \thanks{Aix-Marseille Univ, CNRS, Centrale Marseille, Institut Fresnel, Marseille, France. \texttt{firstname.lastname@fresnel.fr}.} \and Myriam Zerrad\footnotemark[2]}  
\date{}
\begin{document}    
\maketitle
\begin{abstract}
In this paper, we aim to reconstruct an $n$-dimensional real vector from $m$ phaseless measurements corrupted by an  additive noise. We extend the noiseless framework developed in \cite{godeme2023provable}, based on mirror descent (or Bregman gradient descent), to deal with noisy measurements and prove that the procedure is stable to (small enough) additive noise. In the deterministic case, we show that mirror descent converges to a critical point of the phase retrieval problem, and if the algorithm is well initialized and the noise is small enough, the critical point is near the true vector up to a global sign change. When the measurements are \iid Gaussian and the signal-to-noise ratio is large enough, we provide global convergence guarantees that ensure that with high probability, mirror descent converges to a global minimizer near the true vector (up to a global sign change), as soon as the number of measurements $m$ is large enough. The sample complexity bound can be improved if a spectral method is used to provide a good initial guess. We complement our theoretical study with several numerical results showing that mirror descent is both a computationally and statistically efficient scheme to solve the phase retrieval problem.
\end{abstract}
\begin{keywords}
Phase retrieval, Noise, Stability, Inverse problems, Mirror descent.
\end{keywords}
%\tableofcontents
\section{Introduction}\label{sec:intro}

\subsection{Problem statement and motivations}
The phase retrieval problem arises in many applications including X-ray crystallography, diffraction imaging, and light scattering, to name just a few; see \cite{shechtman_phase_2014,JaganathanReview16,luke_phase_2017} and references therein. Phase retrieval is a very active research area and we refer to \cite{shechtman_phase_2014,JaganathanReview16,luke_phase_2017,fannjiang_numerics_2020,vaswani_non-convex_2020} for recent reviews of the current state of the art.

Our focus in this paper is phase retrieval with possibly noisy measurements. In real applications, the intensity measurements are not perfectly acquired. For instance, let us consider light scattering for precision in optics~\cite{amra_instantaneous_2018} which is our motivating application, where the goal is to describe the roughness of a polished surface. The latter is illuminated with a laser source, and the diffusion is measured by moving a detector. Then the power spectral density of the surface topography can be directly measured. However, during the acquisition process, different  types of noise can corrupt the measurements such as photon noise, thermal noise, Johnson noise, \etc. Knowing the statistical model underlying the noise and the way it contaminates the measurements can prove useful to achieve robust reconstruction. The noise model can then be incorporated as the negative log-likelihood in the minimization objective. There are several noise models used in phase retrieval. One of them is the signal-dependent Poisson noise model which models the photon count noise. Another noise model is the (complex-valued) noise arising from multiple scattering, which can be modelled by the (complex) circularly-symmetric Gaussian distribution, and used to describe Rayleigh fading channels encountered in communication systems. Yet another source of noise is the thermal one or the incoherent background noise. 

In this manuscript, and similarly to \cite{candes_phaselift_2013,demanet_stable_2013,chen_solving_2017}, we will work with a generic additive noise model, without any particular statistical assumption, in which the noisy phase retrieval problem reads
\begin{equation}\tag{NoisyPR}\label{eq:NoisyPR}
\begin{cases}
\text{Recover $\avx\in \bbR^n$ from the measurements $y \in \bbR^m$} \\
y[r]=|\adj{a_r}\avx|^{2}+\epsi[r], \quad r \in \bbrac{m}, 
\end{cases}
\end{equation}
where $[r]$ is the $r$-th entry of the corresponding vector, and $\epsi\in\bbR^m$ is the noise vector. Throughout the paper, $A$ is the $m \times n$ matrix with $\adj{a_r}$'s as its rows.

Since $\avx$ is real-valued, the best one can hope for is to ensure that $\avx$ is uniquely determined from its intensities up to a global sign. Phase retrieval is in fact an ill-posed inverse problem in general, and   even  for $\epsilon=0$, checking whether a solution to \eqref{eq:NoisyPR} exists or not is known to be NP complete \cite{Sahinoglou91}. The situation is even more complicated in presence of noise. Thus, one of the major challenges is to design efficient recovery algorithms and find conditions on $m$, $(a_r)_{r \in {\bbrac{m}}}$ and $\epsilon$ which guarantee stable recovery in presence of noise. This is the goal we pursue in this paper.

In this paper, we cast the noise-aware phase retrieval problem \eqref{eq:NoisyPR} as the smooth but nonconvex minimization problem
\begin{equation}\label{eq:formulepro}
\min_{x\in\bbR^n}\left\{ f(x)\eqdef\qsom{\Ppa{y[r]-|(Ax)[r]|^2}^2}\right\}.
\end{equation}
In fact, this is the same problem as in \eqref{eq:eqminPR} studied in the noiseless case in \cite{godeme2023provable}. There, we proposed a mirror descent algorithm based on a suitably chosen entropy. In particular, we analyzed the case where the measurements were either \iid standard Gaussian measurements or drawn from the Coded Diffraction Pattern (CDP) model. It is our aim in this paper to extend these results to the noisy case and prove stability guarantees for mirror descent to minimize \eqref{eq:eqminPR}.

\subsection{Prior work}\label{sec:prior-work}
In phase retrieval, understanding the impact of noise is crucial because real-world measurements are invariably corrupted by it. Thus, establishing stability of phase retrieval to (small enough) noise is of paramount importance. Stability of phase retrieval to (small enough) noise has been studied by several authors with various measurement ensembles and reconstruction procedures. For convex relaxations, Cand\`es and Li showed in \cite{candes_solving_2014} that a noise-aware variant of PhaseLift is stable against additive noise with a reconstruction error bound $O\Ppa{\normm{\avx},\frac{\norm{\epsi}{1}}{m\normm{\avx}}}$ as soon as $m\gtrsim n$ (complex) Gaussian measurements are taken (see also \cite{candes_phaselift_2013} where the sample complexity was $m\gtrsim n\log(n)$). This is of course only meaningful if the signal-to-noise ration is sufficiently high. This result has been extended to the case of sub-Gaussian measurements in \cite{krahmer_complex_2020}. For nonconvex formulations, Huang and Xu in \cite{huang_performance_2021} analyzed the performance of the Wirtinger flow and showed that any solution of this algorithm enjoys a reconstruction error upper bound $O\Ppa{\min\Bba{\frac{\sqrt{\normm{\epsilon}}}{m^{1/4}},\frac{\normm{\epsi}}{\normm{\avx}\sqrt{m}}}}$ as soon as $m\gtrsim n$.  The amplitude, the reshaped and the  Wirtinger flow algorithms are stable against additive noise as shown respectively in
\cite{zhang_nonconvex_2017},\cite{wang_solving_2017} and \cite{gao_stable_2016}. Indeed, these authors showed that the reconstruction error scales as $ O\Ppa{\frac{\normm{\epsi}}{\sqrt{m}}}$. The convergence result is obtained under the specific assumption that $\norm{\epsilon}{\infty}\lesssim \normm{\avx}$. In \cite{xia_performance_2024}, the authors study the performance  of the amplitude-based model and showed the solution satisfies the following reconstruction upper bound $O\Ppa{\frac{\normm{\epsi}}{\sqrt{m}}}$ as soon as $m\gtrsim n$. The  truncated Wirtinger flow \cite{chen_solving_2017}, which can account even for Poisson noise, has been shown to be stable with a reconstruction error bound that scales as $O\Ppa{ \frac{\normm{\epsi}}{\sqrt{m}\normm{\avx}}}$ under the  assumption $\norm{\epsilon}{\infty}\lesssim\normm{\avx}^2$ provided that $m\gtrsim n$. It was shown there that this is the best statistical guarantee any algorithm can achieve by deriving a fundamental lower bound on the minimax estimation error.

\subsection{Contributions and relation to prior work}\label{sec:contribution}
In this paper, we claim that mirror descent to solve \eqref{eq:eqminPR} is stable against sufficiently small additive noise. This in turn provides recovery error bounds of the noisy phase retrieval problem \eqref{eq:NoisyPR}. In the deterministic case, we show that for almost all initializers, mirror descent converges to a critical point near the true vectors (up to sign ambiguity) where the objective has no direction of negative curvature. In the random case, we consider \iid Gaussian measurements, and in the regime where the signal-to-noise ratio is large enough (see Assumption~\ref{ass:SNR}), we provide a complete geometric characterization of the landscape of the nonconvex objective provided that $m \gtrsim n\log^3(n)$. In turn, this allows us to describe the set of the critical points of $f$ as the union of the strict saddle points and global minimizers of $f$. From this, we provide a global convergence to a point in $\Argmin(f)$, which is near $\avx$ (up to sign ambiguity), as soon as the number of samples is large enough. If $m \gtrsim n \log(n)$, using a spectral initialization method, we provide a local convergence to a vector in the neighborhood of the target vector (up to sign ambiguity). By "near" we mean a reconstruction error that eventually scales as $O\Ppa{ \frac{\normm{\epsi}}{\sqrt{m}\normm{\avx}}}$ which matches the minimax optimal bounded established in \cite[Theorem~3]{chen_solving_2017}. Compared to the Wirtinger flow and variants, our algorithm, by adapting to the geometry, offers an easier and dimension-independent choice of the parameters (in fact one, the descent-step size), and has global convergence guarantees.

Our results can be easily extended to sub-Gaussian measurements with minor changes. The case where $a_r$'s are drawn from the CDP model is, however, far more challenging. Indeed, this model enjoys less randomness compared to the (sub-)Gaussian case and many of our arguments that require the uniformization of some bounds that are difficult to extend to the CDP model. Nevertheless, numerical experiments suggest that stable recovery still holds for our mirror descent algorithm with CDP measurements.

\subsection{Outline of the paper}\label{sec:outline}

The rest of the paper is organized as follows. Section~\ref{sec:mdbtny_prelim} gathers preliminary material necessary to our exposition.  In Section~\ref{sec:mdbtny_deter}, we recall the mirror descent algorithm with backtracking and establish its global and local convergence guarantees in the deterministic case. In Section~\ref{sec:mdbtny_rand}, we sample complexity bounds with Gaussian measurements for our deterministic guarantees to hold with high probability. Section~\ref{sec:mdnsy_numexp} describes the numerical experiments. The proofs of technical results are deferred to the appendix. In particular, Section~\ref{landscapestudy} studies the landscape of the noise-aware objective with Gaussian measurements.

\section{Preliminaries}\label{sec:mdbtny_prelim}
\subsection{Notations}
We denote $\pscal{\cdot,\cdot}$ the scalar product and $\normm{\cdot}$ the corresponding norm. $B(x,r)$ is the corresponding ball of radius $r$ centered at $x$ and $\mathbb{S}^{n-1}$ is the corresponding unit sphere. For $m \in \N^*$, we use the shorthand notation ${\bbrac{m}}=\{1,\ldots,m\}$.  The $i$-th entry of a vector $x$ is denoted $x[i]$. Given a matrix $M$, $\transp{M}$ is its transpose and $\adj{M}$ is its adjoint (transpose conjugate) when it is complex. Let $\lambda_{\min}(M)$ and $\lambda_{\max}(M)$ be respectively the smallest and the largest eigenvalues of $M$. For two real symmetric matrices $M$ and $N$, $M \slon N$ if $M-N$ is positive semidefinite.
%For $p \geq 1$, $\norm{\cdot}{p}$ is the $p$-norm with the usual adaptation when $p=+\infty$.
%$\inte$ is the interior of a set. 

%We denote by $\overline{\bbR} = \bbR \cup \{+\infty\}$ and $\Gamma_0(\bbR^n)$ the class of proper lower semicontinuous convex function from $\bbR^n$ to $\overline{\bbR}$. 
$\dom(f)$ is the domain of the function $f$. {$f^*$ denotes the Legendre-Fenchel conjugate of $f$.} 
%For $f \in \Gamma_0(\bbR^n)$, $\partial f$ is the (convex) subdifferential operator. 
Recall that the set of critical points of a differentiable function $f$ is $\crit (f) = \enscond{x \in \bbR^n}{\nabla f(x)=0}$.

Let us denote the set of true vectors by $\xoverline{\calX}=\{\pm\avx\}$. For any vector $x\in\bbR^n$, the distance to the set of true vectors is  
\begin{equation}
\dist(x,\xoverline{\calX})\eqdef \min\pa{\normm{x-\avx},\normm{x+\avx}} .
\end{equation}
Our limitation of the ambiguity set to $\{\pm\avx\}$ may appear restrictive since even for real vectors, the equivalence class is much larger than what we are allowing. However, our restriction will be justified in the oversampling regime.

\subsection{Bregman toolbox}
\paragraph{Definition and properties}
Since our focus is on phase retrieval, we will restrict our discussion here to entropy/kernel functions $\phi$ that are strictly convex and differentiable on the whole space. As such, $\phi$ is in fact a Legendre function; see \cite[Chapter~26]{rockafellar_convex_1970}.
%\textbf{(Legendre function)} 
%	Let $\phi\in\Gamma_0(\bbR^n)$ such that $\inte(\dom(\phi))\neq\emptyset$.  $\phi$ is called
%	\begin{enumerate} [label=(\roman*)]
%		\item \textit{essentially smooth} if it is differentiable on $\inte(\dom(\phi))$ with $\normm{\nabla\phi(\xk)}\to\infty$ for every sequence $\seq{\xk}$ of $\inte(\dom(\phi))$ converging  to a boundary point of $\dom(\phi)$.
%		\item  \textit{essentially strictly convex}  if it is strictly convex on every convex subset of $\dom~\partial\phi\eqdef\enscond{x}{\partial\phi(x)\neq\emptyset}$.
%	\end{enumerate} 
%	A Legendre function is essentially smooth and strictly convex. 
%\end{definition}
Observe that following \cite[Theorem~26.5]{rockafellar_convex_1970}, a function is Legendre if and only if its Fenchel conjugate $\phi^*$ is Legendre. Moreover $\nabla\phi$ is a bijection from $\inte(\dom(\phi))=\bbR^n$ to $\inte(\dom(\phi^*))$ with $\nabla\phi^*=(\nabla\phi)^{-1}$.  

For any function differentiable function $\phi:\bbR^n \to \bbR$, its Bregman divergence is
\begin{equation}
D_{\phi}(x,z)=\phi(x)-\phi(z)-\pscal{\nabla \phi(z),x-z} . 
\end{equation}
%\begin{definition}\textbf{(Bregman divergence)} The general  Bregman divergence associated with $\phi$  is
%	\begin{equation}
%	D_{\phi}^v(x,z)\eqdef \left\{\begin{aligned} &\phi(x)-\phi(z)-\pscal{v,x-z}, &\si ~ (x,z)\in \dom(\phi)\times \inte(\dom(\phi)),v\in\partial\phi(z)  ,\\
%	&+\infty & \odwz. \end{aligned}\right.
%	\end{equation}
%\end{definition}
%\begin{remark}{\ }
%	When  $\phi$  is Legendre or simply sufficiently smooth on $\inte(\dom(\phi))$, we recover the classical definition \ie, for $(x,z)\in \dom(\phi)\times \inte(\dom(\phi))$
%	\begin{equation}
%	D_{\phi}(x,z)=\phi(x)-\phi(z)-\pscal{\nabla  \phi(y),x-z} . 
%	\end{equation}
%\end{remark}
This proximity measure is not a distance (it is not symmetric in general for instance nor it verifies the triangle inequality). If $\phi(x)=\frac{1}{2}\normm{x}^2$, the Bregman divergence is the usual euclidean distance $D_{\phi}(x,z)=\frac{1}{2}\normm{x-z}^2$. 
	
Throughout the rest of the work, we use the following properties of the Bregman divergence.
\begin{proposition}\label{pro:bregman} \textbf{(Properties of the Bregman distance)}
	\begin{enumerate} [label=(\roman*)]
		\item \label{pro:bregman1} The Bregman divergence of $\phi$ is nonnegative if and only if $\phi$ is convex. If in addition, $\phi$ is strictly convex, then its Bregman divergence vanishes if and only if its arguments are equal. \label{pp:bregman1}
		
		\item \label{pro:bregman2}
		Linear additivity: for any $\alpha,\beta \in \bbR$ and any differentiable functions $\phi_1$ and $\phi_2$ we have 
		\begin{equation}\label{linear}
		D_{\alpha \phi_1+\beta \phi_2}(x,z)=\alpha D_{\phi_1}(x,z) + \beta D_{\phi_2}(x,z),
		\end{equation}
		for all $x,z \in \bbR^n$.
		\item \label{pro:bregman3} The three-point identity:  For any $x,u,z \in \bbR^n$, we have 
		\begin{equation}\label{3poin}
		D_{\phi}(x,z)-D_{\phi}(x,u)-D_{\phi}(u,z)=\pscal{\nabla\phi(u)-\nabla\phi(z),x-u}.
		\end{equation}
		
		\item \label{pro:bregman4}
		Suppose that $\phi$ is also $C^2$ and $\nabla^2 \phi(x)$ is positive definite for any $x$. Then for every convex compact subset $\Omega\subset\bbR^n$, there exists $0<\theta_{\Omega}\leq\Theta_{\Omega}<+\infty$ such that for all $x,z \in \Omega$,
		\begin{equation}\label{eq:striconv}
		\frac{\theta_{\Omega}}{2}\normm{x-z}^2\leq D_{\phi}(x,z)\leq\frac{\Theta_{\Omega}}{2}\normm{x-z}^2.
		\end{equation}
	\end{enumerate} 
\end{proposition}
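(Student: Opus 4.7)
The plan is to handle the four items in the order listed, since (ii) and (iii) are direct algebraic identities, (i) is the standard first-order characterization of convexity applied twice, and (iv) is the only item that genuinely uses the $C^2$ hypothesis together with compactness.

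For (i), the identity $D_\phi(x,z) = \phi(x) - \bigl(\phi(z) + \langle \nabla\phi(z), x-z\rangle\bigr)$ expresses the gap between $\phi$ at $x$ and its affine approximation at $z$. Nonnegativity of this gap for every pair $(x,z)$ is exactly the first-order characterization of convexity of a differentiable function (see, e.g., \cite[Thm.~25.1]{rockafellar_convex_1970} applied at each $z$). Strict convexity is characterized by the same first-order inequality being strict for $x \neq z$, which immediately gives $D_\phi(x,z)=0 \Longleftrightarrow x = z$.

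For (ii), I would simply substitute $\nabla(\alpha\phi_1+\beta\phi_2) = \alpha\nabla\phi_1+\beta\nabla\phi_2$ into the definition of $D_{\alpha\phi_1+\beta\phi_2}(x,z)$ and regroup terms to recover $\alpha D_{\phi_1}(x,z)+\beta D_{\phi_2}(x,z)$. For (iii), I would expand all three Bregman divergences appearing in the left-hand side of \eqref{3poin} using the definition; the $\phi(x)$ terms cancel between $D_\phi(x,z)$ and $D_\phi(x,u)$, the $\phi(u)$ terms cancel between $D_\phi(x,u)$ and $D_\phi(u,z)$, leaving only inner products with $\nabla\phi(z)$ and $\nabla\phi(u)$ that rearrange to the right-hand side $\langle \nabla\phi(u) - \nabla\phi(z), x-u\rangle$.

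The one item that requires actual analytic work is (iv). Here the plan is to apply the second-order Taylor formula with integral remainder to $\phi$ between $z$ and $x$, which, after subtracting the tangent affine piece, yields
\begin{equation*}
D_\phi(x,z) = \int_0^1 (1-t)\,\bigl\langle \nabla^2\phi\bigl(z + t(x-z)\bigr)(x-z),\, x-z\bigr\rangle \,dt.
\end{equation*}
Since $\Omega$ is convex, the segment $[z,x]$ lies in $\Omega$ for all $x,z\in\Omega$; since $\nabla^2\phi$ is continuous and positive definite on the compact set $\Omega$, its smallest and largest eigenvalues attain positive finite extrema there, giving constants
\begin{equation*}
0 < \theta_\Omega \eqdef \min_{u\in\Omega}\lambda_{\min}(\nabla^2\phi(u)) \leq \max_{u\in\Omega}\lambda_{\max}(\nabla^2\phi(u)) \eqdef \Theta_\Omega < +\infty.
\end{equation*}
Plugging these uniform Rayleigh quotient bounds into the integrand and using $\int_0^1 (1-t)\,dt = 1/2$ produces the announced two-sided inequality \eqref{eq:striconv}. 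I expect (iv) to be the only step where care is needed, specifically in justifying that the extrema are attained (continuity plus compactness of $\Omega$) and in verifying that the segment stays inside $\Omega$ so the Hessian bounds apply along the integration path; everything else is bookkeeping.
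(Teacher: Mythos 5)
Your proof is correct and takes essentially the approach the paper intends: the paper states Proposition~\ref{pro:bregman} without proof, treating items~\ref{pro:bregman1}--\ref{pro:bregman3} as standard consequences of the first-order characterization of (strict) convexity and of direct algebraic expansion, and your handling of item~\ref{pro:bregman4} via the second-order Taylor formula with integral remainder is exactly the representation the paper itself uses in its proof of Lemma~\ref{lem:bregcomp}. Your two points of care --- attainment of the eigenvalue extrema by continuity of $\nabla^2\phi$ on the compact set $\Omega$, and the segment $[z,x]$ staying in $\Omega$ by convexity --- are precisely the hypotheses the statement supplies, so nothing is missing.
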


\paragraph{Regularity of functions}\label{sec:regul-func}
The following definition extends the classical gradient Lipschitz continuity property to the Bregman setting. This notion is coined "relative smoothness" and is important for the analysis of optimization problems with objective functions that are differentiable but lack the popular gradient Lipschitz-smoothness. The earliest reference to this notion can be found in an economics paper \cite{birnbaum2011distributed} where it is used to address a problem in game theory involving fisher markets. Later on it was developed in \cite{bauschke_descent_2016,bolte_first_2017} and then in \cite{lu2018relatively}, although first coined relative smoothness in \cite{lu2018relatively}. 
\begin{definition}\label{smoothadaptable}\textbf{($L-$relative smoothness)} Let $\phi$ and $g$ be differentiable functions. $g$ is called $L-$smooth relative to $\phi$ if there exists $L>0$ such that $L\phi-g$ is convex on $\bbR^n$, \ie for all $x,z \in \bbR^n$,
	\begin{equation}\label{eq:smoothadaptable}
	D_g(x,z) \leq LD_{\phi}(x,z) .
	\end{equation}
\end{definition}

When $\phi$ is the energy entropy, \ie $\phi=\frac{1}{2}\normm{\cdot}^2$, \eqref{eq:smoothadaptable} is nothing but a manifestation of the standard descent lemma implied by Lipschitz continuity of the gradient of $g$.

In a similar way, we also extend the standard local strong convexity property to a relative version \wrt to an entropy or kernel $\phi$.

\begin{definition}\label{relativeconvex}\textbf{(Local relative strong convexity)}
	Let $\C$ be a non-empty subset of $\bbR^n$ and $\phi$ and $g$ be differentiable functions. For $\sigma > 0$, we say that $g$ is $\sigma$-strongly convex on $\C$ relative to $\phi$ if 
	\begin{equation}
	D_g(x,z)\geq \sigma D_{\phi}(x,z) \quad\forall\quad (x,z) \in \C^2 .
	\end{equation}
\end{definition}
When $\C=\bbR^n$, we recover the notion of global relative strong convexity. If $\phi$ is the energy entropy (\ie $\phi=\frac{1}{2}\normm{\cdot}^2$), one recovers the standard definition of (local/global) strong convexity.

The idea of global relative strong convexity has already been used in the recent literature, see \eg  \cite[Proposition~4.1]{Teboulle18} and \cite[Definition~3.3]{Bauschke19}. Its local version was first proposed in \cite{Silveti22}. Relation of global relative strong convexity to gradient dominated inequalities, which is an essential ingredient to prove global linear convergence of mirror descent, was studied in \cite[Lemma~3.3]{Bauschke19}.

We record the following simple lemma which will be useful to compare Bregman divergences of smooth functions through the partial Loewner ordering of their respective hessians.
\begin{lemma}\label{lem:bregcomp}
	Let $g,\phi\in C^2(\bbR^n)$. If $\forall u\in\bbR^n$, $\nabla^2g(u) \lon \nabla^2\phi(u)$ for all $u$ in the segment $[x,z]$, then,
	\begin{equation}
	D_g(x,z)\leq D_{\phi}(x,z) .
	\end{equation}
\end{lemma}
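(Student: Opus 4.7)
The plan is to represent each Bregman divergence as an integral remainder from second-order Taylor expansion along the segment $[z,x]$, then compare integrands pointwise using the hypothesis.

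First, I would record the following identity, valid for any $C^2$ function $h:\bbR^n \to \bbR$ and any $x, z \in \bbR^n$:
\[
D_h(x,z) = \int_0^1 (1-t)\,\pscal{\nabla^2 h(u_t)(x-z),\, x-z}\, dt, \qquad u_t \eqdef z + t(x-z).
\]
This is the standard second-order Taylor formula with integral remainder applied to the one-variable function $\psi(t) \eqdef h(z+t(x-z))$, using $\psi(1)=h(x)$, $\psi(0)=h(z)$, $\psi'(0)=\pscal{\nabla h(z),x-z}$, and $\psi''(t) = \pscal{\nabla^2 h(u_t)(x-z),x-z}$; then $\psi(1) = \psi(0) + \psi'(0) + \int_0^1 (1-t)\psi''(t)\,dt$ rearranges exactly into the stated identity.

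Next, I would apply this identity to both $\phi$ and $g$ (both of class $C^2$) and subtract, which gives
\[
D_\phi(x,z) - D_g(x,z) = \int_0^1 (1-t)\,\pscal{\bigl[\nabla^2\phi(u_t) - \nabla^2 g(u_t)\bigr](x-z),\, x-z}\, dt.
\]
By hypothesis, for every $u$ in the segment $[x,z]$ we have $\nabla^2\phi(u) \succeq \nabla^2 g(u)$, and in particular this holds at each point $u_t$ for $t \in [0,1]$. Hence the integrand is nonnegative on $[0,1]$, which forces the integral to be nonnegative, yielding $D_g(x,z) \leq D_\phi(x,z)$.

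There is no substantive obstacle: the only step requiring any care is the derivation of the integral form of the Bregman divergence, and this is a one-line application of Taylor's theorem with integral remainder. Continuity of the Hessians on the compact segment $[x,z]$ (guaranteed by $g,\phi \in C^2(\bbR^n)$) ensures that all integrals are well defined, so the conclusion follows at once from the pointwise Loewner ordering.
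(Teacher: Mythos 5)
Your proof is correct and is essentially identical to the paper's: both express the Bregman divergence via the second-order Taylor formula with integral remainder along the segment $[z,x]$, subtract the two representations, and conclude from the pointwise Loewner ordering of the Hessians that the integrand is nonnegative. Your write-up is in fact slightly more detailed than the paper's, since you spell out the one-variable Taylor argument that the paper invokes implicitly.
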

\begin{proof}
	The result comes from the Taylor-MacLaurin expansion. Indeed we have $\forall x,z\in \bbR^n$ 
	\begin{align*}
		D_g(x,z)
		&=g(x)-g(z)-\pscal{\nabla g(z),x-z} \\
		&=\int_0^1(1-\tau)\pscal{x-z,\nabla^2g(z+\tau(x-z))(x-z)}d\tau,
	\end{align*}
	and thus
	\begin{multline*}
		D_{\phi}(x,z)-D_{g}(x,z) = \\ \int_0^1(1-\tau)\pscal{x-z,\Ppa{\nabla^2\phi(z+\tau(x-z))-\nabla^2g(z+\tau(x-z))}(x-z)}d\tau .
	\end{multline*}
	The positive semidefiniteness assumption implies the claim.
\end{proof}

\section{Deterministic Stable Recovery}\label{sec:mdbtny_deter}
\subsection{Mirror descent with backtracking}
Observe that the objective in \eqref{eq:NoisyPR} can be decomposed as
\begin{equation}\label{eq:eqminPR}
f(x)=\qsom{\loss{|(Ax)[r]|^2}{|(A\avx)[r]|^2-\epsi[r]}} .
\end{equation}
%As argued in Section~\ref{sec:mdbt_deter}, 
The objective $f$ is $C^2(\bbR^n)$ and nonconvex (in fact only weakly convex). Moreover, its gradient is not Lipschitz continuous. However, using the strongly convex entropy (see \cite[Proposition~2.6]{godeme2023provable} for its properties),
\begin{align}\label{eq:KGDpr}
	\psi(x)=\frac{1}{4}\normm{x}^4+\frac{1}{2}\normm{x}^2 ,
\end{align}
$f$ turns out to be smooth relative to $\psi$.
%(see Proposition~\ref{prop:psi} for its properties) 
\begin{lemma}\label{Tsmad}
	Let $f$ and $\psi$ defined in \eqref{eq:formulepro}-\eqref{eq:KGDpr}. $f$ is $L-$smooth relative to $\psi$ on $\bbR^n$ for any $L\geq \frac{1}{m}\sum\limits_{r=1}^m\normm{a_r}^2\para{3\normm{a_r}^2+\norm{\epsi}{\infty}}$. 
\end{lemma}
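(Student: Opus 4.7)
The strategy is to reduce relative smoothness to a pointwise Loewner inequality on Hessians. Since both $f$ and $\psi$ are of class $C^2(\bbR^n)$, convexity of $L\psi-f$ is equivalent to $\nabla^2 f(x)\preceq L\nabla^2\psi(x)$ for every $x\in\bbR^n$; this is precisely what Lemma~\ref{lem:bregcomp} encodes, applied to $g=f$ and $\phi=L\psi$ (together with the linear additivity of Bregman divergences in Proposition~\ref{pro:bregman}(ii)). A direct computation gives
\begin{equation*}
\nabla^2\psi(x) \;=\; 2x\transp{x}+(\normm{x}^2+1)I,
\end{equation*}
and, using the measurement model $y[r]=|\adj{a_r}\avx|^2+\epsi[r]$,
\begin{equation*}
\nabla^2 f(x) \;=\; \frac{1}{m}\sum_{r=1}^m\Bigl(6\pscal{a_r,x}^2 - 2y[r]\Bigr)\, a_r\transp{a_r}.
\end{equation*}

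The technical core is the pointwise matrix inequality
\begin{equation*}
2\pscal{a,x}^2\, a\transp{a} \;\preceq\; \normm{a}^4\bigl(2x\transp{x}+\normm{x}^2 I\bigr) \;\preceq\; \normm{a}^4\,\nabla^2\psi(x), \qquad \forall\,a,x\in\bbR^n, \qquad (\star)
\end{equation*}
a building block already exploited (noiselessly) in \cite{godeme2023provable}. The plan is to prove $(\star)$ by normalising $\normm{a}=1$ without loss of generality and restricting to $\mathrm{span}\{a,x\}$, on whose orthogonal complement both sides trivially vanish. Writing $\theta$ for the angle between $a$ and $x$, the $2\times 2$ difference matrix in the orthonormal basis $\{x/\normm{x}, e_{\perp}\}$ has trace $\normm{x}^2(4-2\cos^2\theta)>0$ and determinant $\normm{x}^4(4\cos^4\theta-6\cos^2\theta+3)$; the latter is strictly positive since the quadratic $4c^2-6c+3$ in $c=\cos^2\theta$ has negative discriminant $36-48<0$. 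Summing $(\star)$ over $r$ then gives $\frac{6}{m}\sum_r\pscal{a_r,x}^2\, a_r\transp{a_r}\preceq \frac{3}{m}\sum_r\normm{a_r}^4\,\nabla^2\psi(x)$, which is the announced $3\normm{a_r}^2$ coefficient.

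For the noise contribution $-\frac{2}{m}\sum_r y[r]\,a_r\transp{a_r}$, I would use that $|\adj{a_r}\avx|^2\geq 0$, whence $y[r]\geq \epsi[r]\geq -\norm{\epsi}{\infty}$, so that $-2y[r]\, a_r\transp{a_r}\preceq 2\norm{\epsi}{\infty}\, a_r\transp{a_r}$. The quadratic summand $\tfrac{1}{2}\normm{\cdot}^2$ in the entropy $\psi$ plays a pivotal role at this stage: it guarantees $\nabla^2\psi(x)\succeq I$ uniformly in $x$, which yields the free estimate $a_r\transp{a_r}\preceq \normm{a_r}^2 I\preceq \normm{a_r}^2\,\nabla^2\psi(x)$. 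Combining this with the quartic estimate produces the Loewner bound $\nabla^2 f(x)\preceq L\nabla^2\psi(x)$ for any $L$ at least as large as the claimed quantity.

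The main obstacle is the inequality $(\star)$: any crude spectral estimate such as $\pscal{a,x}^2\, a\transp{a}\preceq \normm{a}^4\normm{x}^2 I$ is off by a factor of $2$ and would produce a strictly worse noiseless constant, so one genuinely has to exploit the rank-one $x\transp{x}$ summand of $\nabla^2\psi$ through the 2D spectral computation above. Once $(\star)$ is established, the noise part is routine, being absorbed by the quadratic summand of $\psi$ that was added to the $\tfrac{1}{4}\normm{\cdot}^4$ kernel precisely for this purpose.
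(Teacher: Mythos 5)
Your overall framework---reducing relative smoothness to the pointwise Loewner comparison $\nabla^2 f(x)\preceq L\nabla^2\psi(x)$ via Lemma~\ref{lem:bregcomp} and Proposition~\ref{pro:bregman}\ref{pro:bregman2}---is exactly the paper's, and your two-dimensional spectral verification of $(\star)$ is correct as stated (trace $4-2\cos^2\theta>0$, determinant $4\cos^4\theta-6\cos^2\theta+3>0$ since the discriminant of $4c^2-6c+3$ is negative). The genuine gap is that your Hessian is off by a factor of $2$: with $f(x)=\frac{1}{4m}\sum_{r=1}^m\pa{y[r]-|\transp{a_r}x|^2}^2$ one gets $\nabla^2 f(x)=\frac{1}{m}\sum_{r=1}^m\pa{3\pscal{a_r,x}^2-y[r]}a_r\transp{a_r}$ (this is \eqref{eq:hessf}), not $\frac{1}{m}\sum_r\pa{6\pscal{a_r,x}^2-2y[r]}a_r\transp{a_r}$. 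This error is invisible in the quartic part, because the sharp inequality $(\star)$ absorbs precisely a factor $2$ there and lands you back on the coefficient $3\normm{a_r}^2$---which is also why you wrongly concluded that the crude bound $\pscal{a,x}^2\,a\transp{a}\preceq\normm{a}^4\normm{x}^2\Id$ ``would produce a strictly worse constant.'' But the error is fatal in the noise part: from $-2y[r]\leq 2\norm{\epsi}{\infty}$ you can only reach the coefficient $2\norm{\epsi}{\infty}\normm{a_r}^2$, not the stated $\norm{\epsi}{\infty}\normm{a_r}^2$, and no refinement analogous to $(\star)$ exists for the rank-one noise terms: at $x=0$ one has $\nabla^2\psi(0)=\Id$, so $a\transp{a}\preceq c\,\nabla^2\psi(0)$ forces $c\geq\normm{a}^2$. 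Hence, as written, your argument proves the lemma only with the weaker modulus $L\geq\frac{1}{m}\sum_{r=1}^m\normm{a_r}^2\pa{3\normm{a_r}^2+2\norm{\epsi}{\infty}}$, and indeed with your doubled Hessian the stated constant would be false in general (take $n=m=1$, $\avx=0$, $\normm{a_1}$ small and $\norm{\epsi}{\infty}$ large).

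Once the Hessian is corrected, the proof collapses to the paper's: drop the nonpositive term $-|\transp{a_r}\avx|^2|\transp{a_r}u|^2$, bound $3\pscal{a_r,x}^2\leq 3\normm{a_r}^2\normm{x}^2$ and $-\epsi[r]\leq\norm{\epsi}{\infty}$, and use $|\transp{a_r}u|^2\leq\normm{a_r}^2\normm{u}^2$ together with $\nabla^2\psi(x)\succeq\pa{\normm{x}^2+1}\Id$, giving $\pscal{u,\nabla^2 f(x)u}\leq\frac{1}{m}\sum_r\normm{a_r}^2\pa{3\normm{a_r}^2+\norm{\epsi}{\infty}}\pa{\normm{x}^2+1}\normm{u}^2\leq L\,\pscal{u,\nabla^2\psi(x)u}$; your two-dimensional computation is unnecessary for the claimed constant. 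It is worth noting that $(\star)$ applied to the \emph{correct} Hessian would actually yield the strictly better modulus $\frac{1}{m}\sum_r\normm{a_r}^2\Ppa{\frac{3}{2}\normm{a_r}^2+\norm{\epsi}{\infty}}$---a genuine improvement over the lemma---but that is not what your write-up establishes. A final nitpick: on the orthogonal complement of $\Span(a,x)$ the right-hand side of $(\star)$ equals $\normm{a}^4\normm{x}^2\Id\succ 0$ rather than vanishing; the inequality still holds there, but the justification should say so.
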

See Section~\ref{PrTsmad} for the proof. This estimate of the modulus of relative smoothness $L$ is crude and depends also on the noise. This estimate will be largely improved for Gaussian measurements.\\

This relative smoothness property is the key motivation behind considering the framework of mirror descent or Bregman gradient descent. The mirror descent scheme with backtracking is detailed in Algorithm~\ref{alg:MDBT}.

\begin{algorithm}[H]
 \caption{Mirror Descent for Phase Retrieval}
 \label{alg:MDBT}
    \textbf{Parameters:} $0 < L_0 \leq L$ (see Lemma~\ref{Tsmad}), $\kappa \in ]0,1[$, $\xi\leq1$ . \\
    \textbf{Initialization:} $x_0\in\bbR^n$\;
    \For{$k=0,1,\ldots$}{
    \Repeat{$D_{f}(\xkp,\xk) \leq \xi L_{k}D_{\psi}(\xkp,\xk)$}{
    $\gak=\frac{1-\kappa}{\lk}$; \\
    $\xkp=F(\xk)=\nabla\psi^{*}\paren{\nabla\psi(\xk)-\gak\nabla f(\xk)}$; \\
    $\lk \leftarrow \lk/\xi$; 
    }
    $L_{k+1} \leftarrow \xi\lk$; \\
    %$\lk \leftarrow \xi\lk$, $\gak=\frac{1-\kappa}{\lk}$ \\
    \textbf{Output:} $\xkp$.
    }
\end{algorithm}     
The pair $(f,\psi)$ defined in \eqref{eq:formulepro}-\eqref{eq:KGDpr} satisfies \cite[Assumptions $A,B,C,D$]{bolte_first_2017} and thus the mapping $F$ in Algorithm~\ref{alg:MDBT} is well-defined and single-valued on $\bbR^n$. Moreover, the mirror step $\nabla \psi^*(z)$ can be computed easily as $\nabla \psi^*(z) = t^* z$, where $t^*$ is the unique real positive root of the third-order polynomial $t^3\normm{z}^2+t-1=0$; see \cite[Proposition~2.8]{godeme2023provable}.

\subsection{Deterministic recovery guarantees by mirror descent}
Before the deterministic result, we start by recalling the notion of strict saddles.  
%\begin{definition}\label{AssumptionE}\textbf{($f$-attentive neighborhood)}{\ }
%Let $r>0$, a point $u\in\bbR^n$ belongs to an $f$-attentive neighborhood  of $x \in \bbR^n$ at distance $r$, if there exist $\mu>0$ and $\delta \in ]0,r/\max\pa{\sqrt{\Theta(r)},1}[$ such that $u \in B(x,\delta)$ and $f(x)<f(u)<f(x)+\mu$, where we recall $\Theta(r)$ from Proposition~\ref{pp:bregman}\ref{pp:bregman4}. 
%\end{definition}

\begin{definition}[Strict saddle point]
	A point $\xpa\in\crit(f)$ is a strict saddle point of $f$ if $\lambda_{\min}(\nabla^2f(\xpa))< 0$. The set of strict saddle points of $f$ is denoted $\strisad(f)$.
\end{definition}

%\begin{definition}{ \ }
%\begin{itemize}
%\item \label{def: strit_sad}\textbf{(Strict saddle point)}  A point $\xpa\in\crit(f)$ is a strict saddle point of $f$ if $\lambda_{\min}(\nabla^2f(\xpa))< 0$. The set of strict saddle points of $f$ is denoted $\strisad(f)$.
%\item \label{def: ged_crit}\textbf{(Degenerate critical point)} A point $\xpa\in\crit(f)$ is a degenerate critical point of $f$ if the bilinear form associated with $\nabla^2f(\xpa)$ is degenerate. We denote by $\degcrit(f)$ the set of degenerate critical points of $f$.
%\end{itemize}
%\end{definition}
%\begin{remark}
%A degenerate critical $\xpa$ is equivalent to the fact that the Hessian $\nabla^2f(\xpa)$ is a singular. 
%\end{remark}

We now claim that mirror descent is stable against additive noise, as demonstrated in
the following theorem.
\begin{theorem}\label{thm:Det_reslt}%\textbf{(Convergence guarantees)}
	Consider the noisy phase retrieval problem cast as \eqref{eq:formulepro}. Let $\seq{\xk}$ be a bounded sequence generated by Algorithm~\ref{alg:MDBT}. Then,
	\begin{enumerate}[label=(\roman*)]
		\item \label{point-i} the sequence $\seq{\xk}$ has a finite length, converges to a point in $\crit(f)$ and the values $\seq{f(\xk)}$ are nonincreasing.
		
		Take $L_k = L, \forall k \geq 0$. Then,
		
		\item \label{point-ii} for Lebesgue almost all initializers $\xo$, the sequence $\seq{\xk}$ converges to a critical point which cannot be a strict saddle, \ie $x_k \to \widetilde{x} \in \crit(f)\bsl\strisad(f)$.
		
		\item \label{point-iii} Assume that $\Argmin(f) \neq \emptyset$. Let $\rho, \sigma > 0$ such that $\rho>\frac{\sqrt{2}\normm{\epsi}}{\sqrt{m\sigma}}$ and define the radius $r \leq \sqrt{\frac{\rho^2-\frac{2\normm{\epsi}^2}{m\sigma}}{\max\Ppa{\Theta(\rho),1}}}$. If the initial point $\xo \in B(\overline{\calX},r)$ and $f$ is $\sigma$-strongly convex relative to $\psi$ on $B(\overline{\calX},\rho)$ then $\xk\in B(\overline{\calX},\rho), \forall k \in\bbN$, and
		\begin{equation}\label{eq:loclinearnoisy}
		\dist^2\pa{\xk,\overline{\calX}} \leq \Ppa{1-\gamma\sigma}^{k-1}\rho^2+2\frac{\normm{\epsi}^2}{m\sigma},
		\end{equation}
	\end{enumerate}
\end{theorem}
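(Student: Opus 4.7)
My plan is to treat the three assertions separately, with \ref{point-iii} being the technical heart. For \ref{point-i}, I would instantiate the standard Kurdyka--\L ojasiewicz (KL) convergence template for descent methods on relatively smooth nonconvex functions. Lemma~\ref{Tsmad} supplies the relative smoothness of $f$ with respect to $\psi$; combined with the backtracking exit test, this yields the sufficient-decrease inequality $f(\xk)-f(\xkp)\geq \kappa L_{k+1} D_\psi(\xkp,\xk)$, which also gives the monotonicity of $\seq{f(\xk)}$. A direct Hessian computation on $\psi=\tfrac{1}{4}\normm{\cdot}^4+\tfrac{1}{2}\normm{\cdot}^2$ shows $\nabla^2\psi(x)\slon I$ everywhere, so Proposition~\ref{pro:bregman}\ref{pro:bregman4} gives $D_\psi(\xkp,\xk)\geq\tfrac{1}{2}\normm{\xkp-\xk}^2$, and the backtracking moduli stay bounded on the compact closure of $\seq{\xk}$; hence $f(\xk)-f(\xkp)\gtrsim \normm{\xkp-\xk}^2$. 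Since $f$ is a polynomial it is semi-algebraic and satisfies the KL inequality, so the Attouch--Bolte--Svaiter abstract convergence theorem produces finite length and convergence to a point of $\crit(f)$.

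For \ref{point-ii}, I would use a center-stable manifold argument in the style of Lee--Simchowitz--Jordan--Recht. With $L_k\equiv L$ and step size $\gamma=(1-\kappa)/L$, the update reads $\xkp=F(\xk)$ with $F(x)=\nabla\psi^*(\nabla\psi(x)-\gamma\nabla f(x))$, a $C^1$ self-map of $\bbR^n$ whose Jacobian factors as $DF(x)=\nabla^2\psi^*(\nabla\psi(x)-\gamma\nabla f(x))\,(\nabla^2\psi(x)-\gamma\nabla^2 f(x))$. The first factor is positive definite because $\psi^*$ is Legendre. For the second, $L$-relative smoothness gives $L\nabla^2\psi(x)\slon \nabla^2 f(x)$, hence $(1-\kappa)\nabla^2\psi(x)\slon \gamma\nabla^2 f(x)$, so $\nabla^2\psi(x)-\gamma\nabla^2 f(x)\slon \kappa\nabla^2\psi(x)\slon \kappa I$. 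Thus $DF(x)$ is nonsingular everywhere, $F$ is a local diffeomorphism, and the stable-manifold theorem then implies that the set of initializers whose $F$-orbit accumulates at $\strisad(f)$ has Lebesgue measure zero; combining with \ref{point-i} closes this part.

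For \ref{point-iii} (the crux), given $\xk\in B(\overline{\calX},\rho)$ I pick the closest $\bar x\in\{\pm\avx\}$ to $\xk$ (unique when $\rho<\normm{\avx}$) and apply the three-point identity Proposition~\ref{pro:bregman}\ref{pro:bregman3} together with the mirror update $\nabla\psi(\xkp)-\nabla\psi(\xk)=-\gamma\nabla f(\xk)$ to obtain
\begin{equation*}
D_\psi(\bar x,\xkp) = D_\psi(\bar x,\xk) - D_\psi(\xkp,\xk) + \gamma\pscal{\nabla f(\xk),\bar x-\xkp}.
\end{equation*}
Splitting $\bar x-\xkp=(\bar x-\xk)+(\xk-\xkp)$, I would bound the two resulting inner products respectively by the $\sigma$-relative strong convexity of $f$ on $B(\overline{\calX},\rho)$ and by the $L$-relative smoothness of $f$, namely
\begin{equation*}
\pscal{\nabla f(\xk),\bar x-\xk} \leq f(\bar x)-f(\xk)-\sigma D_\psi(\bar x,\xk), \quad \pscal{\nabla f(\xk),\xk-\xkp} \leq f(\xk)-f(\xkp)+L D_\psi(\xkp,\xk).
\end{equation*}
Using $\gamma L=1-\kappa$ to absorb the $D_\psi(\xkp,\xk)$ terms, together with $f(\bar x)=\normm{\epsi}^2/(2m)$ and $f(\xkp)\geq 0$, these combine into the contractive recursion
\begin{equation*}
D_\psi(\bar x,\xkp) \leq (1-\gamma\sigma)\,D_\psi(\bar x,\xk)+\frac{\gamma\normm{\epsi}^2}{2m}.
\end{equation*}
Iterating and summing the geometric series yields $D_\psi(\bar x,\xk)\leq (1-\gamma\sigma)^k D_\psi(\bar x,\xo)+\normm{\epsi}^2/(2m\sigma)$; the quadratic sandwich $\tfrac{\theta}{2}\normm{\cdot}^2\leq D_\psi(\cdot,\cdot)\leq \tfrac{\Theta(\rho)}{2}\normm{\cdot}^2$ from Proposition~\ref{pro:bregman}\ref{pro:bregman4} (with $\theta\geq 1$ from the Hessian computation above) then converts this into the euclidean bound \eqref{eq:loclinearnoisy}.

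The main obstacle is to close the induction: at every step $\xk$ must stay in $B(\overline{\calX},\rho)$ for relative strong convexity to be available, and the nearest point $\bar x$ must not switch between $+\avx$ and $-\avx$. This is exactly what the quantitative radius $r\leq \sqrt{(\rho^2-2\normm{\epsi}^2/(m\sigma))/\max(\Theta(\rho),1)}$ enforces: plugging $D_\psi(\bar x,\xo)\leq \Theta(\rho)r^2/2$ into the iterated bound and using $(1-\gamma\sigma)<1$ shows that $\normm{\xk-\bar x}^2$ stays strictly below $\rho^2$ for every $k$, which propagates the invariance (and a fortiori keeps $\bar x$ fixed along the whole trajectory).
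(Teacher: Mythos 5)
Your proposal is correct and follows essentially the same route as the paper: your three-point-identity derivation of the one-step inequality is precisely the proof of the descent lemma the paper invokes (Lemma~\ref{Alllinone}, i.e.\ \cite[Lemma~A.2]{godeme2023provable}), and the induction anchored at a fixed sign of $\avx$, the geometric-series summation, and the Bregman-to-Euclidean conversion via Proposition~\ref{pro:bregman}\ref{pro:bregman4} with the radius $r$ all match the paper's argument for \ref{point-iii}, while your KL and center-stable-manifold sketches for \ref{point-i}--\ref{point-ii} are exactly what the paper imports from \cite[Theorem~2.11]{godeme2023provable}. The only deviations are harmless: you bound the noise term by $f(\pm\avx)$ directly using $f\geq 0$ instead of the paper's Lemma~\ref{lem:interm} (in fact slightly sharper), and your constant $f(\pm\avx)=\normm{\epsi}^2/(2m)$ should be $\normm{\epsi}^2/(4m)$ under the paper's $\frac{1}{4m}$ normalization --- an overestimate that still lands within the claimed bound \eqref{eq:loclinearnoisy}.
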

See Section~\ref{PrMaintheo} for the proof.\\

Some remarks are in order.
\begin{remark}\label{rem:deterministic} {\ }
	\begin{itemize}
		\item Clearly, claim~\ref{point-i} suggests that even in the presence of the noise, any bounded sequence of Algorithm~\ref{alg:MDBT} will converge to a critical point of $f$ with decreasing values. Let us observe that the sequence generated by our algorithm is bounded if for instance $f$ is coercive, in which case $\Argmin(f)$ is also a non-empty compact set. This happens to be true when $A$ is injective, \ie in the oversampling regime as we will show in the random case. 
		\item Claim~\ref{point-ii} shows that when the initial guess $\xo$ is chosen according to a distribution that has a density \wrt the Lebesgue measure with constant step-size, then the sequence generated by mirror descent converges to a critical point where $f$ has no direction of negative curvature.  
		\item Concerning our local results in claim-\ref{point-iii}, if mirror descent is well initialized \ie, in a ball of sufficiently small radius $r < \rho$ around the true vectors $\overline{\calX}$, and if $f$ is strongly convex relative to $\psi$ on the larger ball $B(\overline{\calX},\rho)$, then all the iterates $\seq{\xk}$ will remain in $B(\overline{\calX},\rho)$. Moreover, the sequence $(\xk)_{k \in \N}$ will converge to a critical point $\wtilde{x}$ obeying
		\[
		\dist(\wtilde{x},\overline{\calX}) \leq \frac{\sqrt{2}\normm{\epsi}}{\sqrt{m\sigma}} .
		\]
	\end{itemize}
\end{remark}

\section{Stable Recovery from Gaussian Measurements}\label{sec:mdbtny_rand}
The deterministic stable recovery results of Theorem~\ref{thm:Det_reslt}\ref{point-ii}-\ref{point-iii} require for instance a local relative strong convexity condition around $\pm\avx$ and possibly a good enough initial guess. A natural question to ask is when these conditions hold true. In turns out that this is indeed the case in the oversampling regime with \iid Gaussian measurements, and if the noise is small enough. This section is devoted to rigorously show these statements. 

We consider that the sensing vectors $(a_r)_{r \in \bbrac{m}}$ are drawn \iid from a real zero-mean standard Gaussian distribution. We also work under the following assumption on the noise $\epsi$. 
\begin{assumption}{\ } \label{ass:SNR}
	Denote $\widetilde{\epsi}=\frac{1}{m}\sum\limits_{r=1}^{m}\epsi[r]$. Given $\lambda \in ]0,1[$, we suppose that
	\begin{equation}\label{eq:cond_r}
	\begin{gathered}
	0 \leq \frac{\widetilde{\epsi}}{\min\pa{\normm{{\avx}}^2,1}} < \lambda \qandq 
	\norm{\epsilon}{\infty} \leq c_s \min\pa{\normm{{\avx}}^2,1}, \\ 
	\text{ for some constant } c_s \in \left]0,\frac{(1-\lambda)\normm{\avx}\sqrt{\lambda\min\pa{\normm{{\avx}}^2,1}-\widetilde{\epsi}}}{2\sqrt{6}\min\pa{\normm{{\avx}}^2,1}} \right [. 
	\end{gathered}
	\end{equation} 
\end{assumption}

To get better understanding of this assumption, we observe that it implies that
\begin{eqnarray*}%\label{eq:cond_r}
	\frac{\normm{\epsi}}{\sqrt{m}\normm{{\avx}}^2} \leq \frac{\norm{\epsi}{\infty}}{\normm{{\avx}}^2} < \frac{(1-\lambda)\sqrt{\lambda}}{2\sqrt{6}} < 1 .
\end{eqnarray*}
On the other hand, for the observation model \eqref{eq:NoisyPR} with \iid real Gaussian sensing vectors, the signal-to-noise ratio (SNR) is captured by
\[
\SNR \eqdef \frac{\sum_{r=1}^m|\transp{a_r}\avx|^4}{\normm{\epsi}^2} \approx \frac{3m\normm{\avx}^4}{\normm{\epsi}^2} .
\]
In other words, Assumption~\ref{ass:SNR} amounts to imposing that the SNR is large enough, \ie
\[
\sqrt{\SNR} \gtrsim \frac{6\sqrt{2}}{(1-\lambda)\sqrt{\lambda}}.
\]

%signal-to-noise ratio of the problem is large enough, indeed it is bounded from below by $\frac{1}{c_s}$ with $c_s\in ]0,1[$. 

%\todo{Rephrase, discuss better and clarify when it is really necessary.}
	Let us also observe that Assumption~\ref{ass:SNR} imposes that the empirical mean $\widetilde{\epsi}$ is non-negative. This is a practical assumption that is fulfilled in many applications and will be helpful to describe the landscape of the noise-aware objective for Gaussian measurements. However, it was not used to have the deterministic guarantees.  

\medskip 

Some of our stable recovery guarantees will be local provided that Algorithm~\ref{alg:MDBT} is initialized with a good guess. For this, we will use a spectral initialization method; see for instance \cite{Candes_WF_2015,chen_solving_2017,netrapalli_phase_2015,zhang_nonconvex_2017,wang_solving_2017,waldspurger_phase_2018}. The procedure consists of taking $\xo$ as the leading  eigenvector of a specific matrix as described in Algorithm~\ref{alg:algoSPIniit}.

\begin{algorithm}[H]\caption{Spectral Initialization.}
	\label{alg:algoSPIniit}
	\KwIn{$y[r], r=1,\ldots,m$}
	\KwOut{$\xo$}
	Set $\lambda^2=n\frac{\sum\limits_ry[r]}{\sum\limits_r\normm{a_r}^2}=n\para{\frac{\sum\limits_r\pscal{a_r,\avx}^2}{\sum\limits_r\normm{a_r}^2}+\frac{\sum\limits_r\epsi_r}{\sum\limits_r\normm{a_r}^2}}$ \;
	Take $\xo$ the top eigenvector of $Y=\frac{1}{m}\sum\limits_{r=1}^my[r]a_r\transp{a_r}$ normalized to $\normm{\xo}=\lambda$.
	%\label{Spini}
\end{algorithm}

%%%%%%%%%%%%%%%%%%%%%%%%%%%%%%%%%%%%%%%%%%%%%%%%%%%%%%%%%%%%%%%%%%%%%%%%%%%%%%%%%%%%%%%%%%%%%%%%%%%%%%%%%%%%%%%%%%%%%%%%%%%%%%%%%%%%%%%%%%%%%%%%%%%%%%%%%%%%%%%%%%%%%

To lighten notations and clarify our proof, we consider the following events on whose intersection our deterministic convergence result will hold with high probability. Let fix $\vrho\in]0,1[$ and $\lambda\in\left]\frac{1}{9\sqrt{2}},1\right[$.
\begin{itemize}
	\item The event 
	\begin{equation}\label{eq:crit_char_G}
	\calE_{\rm strictsad} = \ensB{\crit(f)=\Argmin(f)\cup\strisad(f)} 
	\end{equation}
	means that the set of critical points of the function $f$ reduces to the set global minimizers of $f$ and the set of strict saddle points. 
	
	\item The event 
	\begin{equation} \label{eq:uniconcen_G}
	\calE_{\rm conH} = \left\{\forall x\in\bbR^n,\quad\normm{\nabla^2f(x)-\esp{\nabla^2f(x)}}\leq\vrho\para{\normm{x}^2+\normm{\avx}^2/3+\norm{\epsi}{\infty}}\right\}  
	\end{equation}
	captures the deviation of the Hessian of $f$ around its expectation.
	
	\item The event
	\begin{equation}\label{eq:injectivity_G}
	\calE_{\rm inj} =\left\{\forall x\in\bbR^n,\quad \Ppa{1-\vrho}\normm{x}^2\leq \frac{1}{m}\normm{A x}^2\right\}
	\end{equation}
	corresponds to the injectivity of the measurement operator $A$.

	\item Let us denote by $\calE_{\rm smad}$ the event on which the function $f$ is $L-$smooth relative to $\psi$ with $L=3+\widetilde{\epsi}+\vrho\max\pa{\normm{\avx}^2/3+1,1}$. 
	
	\item Let us define $\rho=\frac{(1-\lambda)\normm{\avx}}{\sqrt{3}}>0$ and $\calE_{\rm scvx}$ is the event on which $f$ is $\sigma$-strongly convex relative to $\psi$ locally on $B(\xoverline{\calX},\rho)$, with $\sigma=\lambda\min\pa{\normm{\avx}^2,1}-\widetilde{\epsi}-\vrho\max\pa{\normm{\avx}^2/3+\norm{\epsi}{\infty},1}$. 
	
	\item We end up denoting
	\begin{equation}\label{eq:e1_conv_G}
	\calE_{\rm conv}=\calE_{\rm strictsad}\cap\calE_{\rm conH}\cap\calE_{\rm inj}\cap\calE_{\rm smad}\cap\calE_{\rm scvx}. 
	\end{equation}
\end{itemize}

Our main result for Gaussian measurements is the following. 
\begin{theorem}\label{thm: Gauss}
	Fix $\lambda\in\left]\frac{1}{9\sqrt{2}},1\right[$ and $\vrho\in \left]0,\frac{\lambda\min(\normm{\avx}^2,1)-\widetilde{\epsi}}{2\max\pa{\normm{\avx}^2/3+\norm{\epsi}{\infty},1}}\right[$. Let $\seq{\xk}$ be the sequence generated by Algorithm~\ref{alg:MDBT}. Under Assumption~\ref{ass:SNR}, let us  define  for any $\kappa\in]0,1[$ 
	\[
	\nu=\frac{(1-\kappa)\Ppa{\lambda\min\pa{\normm{\avx}^2,1}-\widetilde{\epsi}-\vrho\max\pa{\normm{\avx}^2/3+\norm{\epsi}{\infty},1}}}{3+\widetilde{\epsi}+\vrho\max\pa{\normm{\avx}^2/3+\norm{\epsi}{\infty},1}} \in [0,1[
	\]
	and
	\[
	\varsigma=\frac{2\sqrt{2}\normm{\epsi}}{\sqrt{m\Ppa{c_s\min\pa{\normm{\avx}^2,1}-\widetilde{\epsi}}}}.
	\]
	\begin{enumerate}[label=(\roman*)]
		\item \label{thm: Gauss-i} If the number of measurements $m$ is large enough \ie, $m\geq C(\vrho)n\log^3(n)$, then for almost all initializers $\xo$ of Algorithm~\ref{alg:MDBT} with the step-size $\gamma\equiv\frac{1-\kappa}{3+\widetilde{\epsi}+\vrho\max\pa{\normm{\avx}^2/3+\norm{\epsi}{\infty},1}}$, we have 
		\[
		\xk \to \xsol\in\Argmin(f)\cap B\Ppa{\overline{\calX},\varsigma}
		\]
		and $\exists K>0$ such that  for all $k\geq K$, we have 
		\begin{equation}\label{noisy_conv_lin_G}
		\dist^2(\xk,\overline{\calX})\leq \frac{\normm{\avx}^2}{3}(1-\nu)^{k-K}+\varsigma^2 . 
		\end{equation}
		Both above events hold with a probability at least $1-2e^{-\frac{m\pa{\sqrt{1+\vrho}-1}^2}{8}}-e^{-\Omega(m)}-5e^{-\zeta n}-4/n^2 -c/m$, where $c, \zeta$ are fixed numerical constants.
		\item \label{thm: Gauss-ii} Suppose that $\vrho$ obeys \eqref{eq:spectralinitvrhobnd_nsy}. If $m$ is such that $m\geq C(\vrho,\norm{\epsi}{\infty})n\log(n)$, and Algorithm~\ref{alg:MDBT} is initialized with the spectral method in Algorithm~\ref{alg:algoSPIniit}, then \eqref{noisy_conv_lin_G} holds for all $k \geq K=0$ with probability at least $1-2e^{-\frac{m\pa{\sqrt{1+\vrho}-1}^2}{8}}-e^{-\Omega(m)}-5e^{-\zeta n}-4/n^2$, where $\zeta$ is a fixed numerical constant.
	\end{enumerate}
\end{theorem}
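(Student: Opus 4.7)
The plan is to reduce the statement to the deterministic Theorem~\ref{thm:Det_reslt} on the high-probability event $\calE_{\rm conv}$, then control each of the five sub-events in \eqref{eq:crit_char_G}--\eqref{eq:e1_conv_G} separately and combine via a union bound.

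On $\calE_{\rm conv}$, part~\ref{thm: Gauss-i} would be obtained as follows. The event $\calE_{\rm inj}$ makes $f$ coercive, so $\Argmin(f)\neq\emptyset$ and any sequence $\seq{\xk}$ produced by Algorithm~\ref{alg:MDBT} is bounded. On $\calE_{\rm smad}$ the global relative smoothness modulus $L=3+\widetilde{\epsi}+\vrho\max(\normm{\avx}^2/3+\norm{\epsi}{\infty},1)$ makes the constant step size $\gamma=(1-\kappa)/L$ admissible, so Theorem~\ref{thm:Det_reslt}\ref{point-i}--\ref{point-ii} yields, for Lebesgue-almost every $\xo$, $\xk\to\wtilde{x}\in\crit(f)\setminus\strisad(f)$, and $\calE_{\rm strictsad}$ upgrades this to $\wtilde{x}\in\Argmin(f)$. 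I would then show that $\Argmin(f)\subset B(\overline{\calX},\varsigma)$ by combining the upper bound $f(\xsol)\leq f(\pm\avx)=\frac{1}{2m}\sum_{r=1}^m \epsi[r]^2$ with the local relative strong-convexity lower bound provided by $\calE_{\rm scvx}$; Proposition~\ref{pro:bregman}\ref{pro:bregman4} then reduces $D_\psi$ to a multiple of $\dist^2(\cdot,\overline{\calX})$ on the relevant compact set, and Assumption~\ref{ass:SNR} ensures that the resulting constant $\sigma=\lambda\min(\normm{\avx}^2,1)-\widetilde{\epsi}-\vrho\max(\normm{\avx}^2/3+\norm{\epsi}{\infty},1)$ is strictly positive. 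Once $\wtilde{x}$ is located inside $B(\overline{\calX},\rho)$ with $\rho=(1-\lambda)\normm{\avx}/\sqrt{3}$, the tail of $\seq{\xk}$ enters this ball for all $k\geq K$ (with $K$ depending on the realization), and Theorem~\ref{thm:Det_reslt}\ref{point-iii} applied from index $K$ delivers \eqref{noisy_conv_lin_G} with $\nu=\gamma\sigma$.

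For part~\ref{thm: Gauss-ii}, neither $\calE_{\rm strictsad}$ nor the uniform Hessian deviation $\calE_{\rm conH}$ are needed. I would instead analyze Algorithm~\ref{alg:algoSPIniit} and show that under $m\gtrsim n\log(n)$ its output $\xo$ sits inside the basin of attraction of Theorem~\ref{thm:Det_reslt}\ref{point-iii}. This uses concentration of $Y=\frac{1}{m}\sum_r y[r]\, a_r\transp{a_r}$ around $\esp{Y}=\normm{\avx}^2 I+2\avx\transp{\avx}+\widetilde{\epsi}\, I$ (the non-negativity of $\widetilde{\epsi}$ in Assumption~\ref{ass:SNR} keeps the spectrum well separated) together with a Davis--Kahan/Weyl perturbation bound for the leading eigenvector. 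The constraint \eqref{eq:spectralinitvrhobnd_nsy} on $\vrho$ is precisely what quantifies this proximity, so $K=0$ suffices in Theorem~\ref{thm:Det_reslt}\ref{point-iii} and \eqref{noisy_conv_lin_G} holds for all $k\geq 0$. The events $\calE_{\rm inj}$, $\calE_{\rm smad}$ and $\calE_{\rm scvx}$ still play the same role as in part~\ref{thm: Gauss-i}, namely coercivity, admissibility of the step size, and local strong convexity.

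The main obstacle is the probabilistic control of $\calE_{\rm conH}$ and $\calE_{\rm strictsad}$. The events $\calE_{\rm inj}$, $\calE_{\rm smad}$ and $\calE_{\rm scvx}$ follow from standard Gaussian concentration of pointwise moment-type matrices such as $\frac{1}{m}\sum_r a_r\transp{a_r}$ and $\frac{1}{m}\sum_r \pscal{a_r,\avx}^2 a_r\transp{a_r}$; these require only $m\gtrsim n$ and contribute the $e^{-\Omega(m)}$ and $e^{-m\pa{\sqrt{1+\vrho}-1}^2/8}$ tail terms. By contrast, $\calE_{\rm conH}$ asks for a deviation bound on $\nabla^2 f(x)$ that is uniform over $x\in\bbR^n$; because the summands of $\nabla^2 f$ are quartic in $\pscal{a_r,x}$ and therefore heavy-tailed, a plain $\epsilon$-net argument is insufficient and a truncation plus chaining scheme has to be carried out, which pays the extra $\log^3(n)$ factor driving the sample complexity of part~\ref{thm: Gauss-i}. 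The landscape event $\calE_{\rm strictsad}$ would then follow by combining $\calE_{\rm conH}$ with the analysis of the expected objective $\esp{f}$ performed in Section~\ref{landscapestudy}, which shows that every non-minimizing critical point of $\esp{f}$ exhibits a direction of strictly negative curvature; $\calE_{\rm conH}$ then transports this property to $f$ itself. A final union bound over all these events yields the stated probabilities.
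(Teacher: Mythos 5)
Your architecture for part~\ref{thm: Gauss-i} matches the paper's (reduce to Theorem~\ref{thm:Det_reslt} on $\calE_{\rm conv}$, then union-bound the sub-events), but there is a genuine gap in how you localize the minimizers. You propose to prove $\Argmin(f)\subset B(\overline{\calX},\varsigma)$ by combining the value bound $f(\xsol)\leq f(\pm\avx)$ with the strong convexity supplied by $\calE_{\rm scvx}$. That event only gives relative strong convexity \emph{locally}, on $B(\overline{\calX},\rho)$, so the inequality $D_f(\avx,\xsol)\geq \sigma D_\psi(\avx,\xsol)$ is available only if you already know $\xsol\in B(\overline{\calX},\rho)$ --- which is exactly what you are trying to establish; as written, the argument is circular, since a small value of $f(\xsol)$ says nothing about the location of $\xsol$ outside the ball where convexity holds. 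The paper closes this hole with a separate global ingredient, Lemma~\ref{lem:xbar_et_x}, which bounds $\dist(\xsol,\overline{\calX})\lesssim \normm{\epsi}/(\sqrt{m}\normm{\avx})$ via the lifted rank-one formulation: the value bound gives $\normF{\xsol\transp{\xsol}-\avx\transp{\avx}}\lesssim \normm{\epsi}/\sqrt{m}$ through the restricted isometry of \cite[Lemma~1]{chen_solving_2017}, and Weyl plus Davis--Kahan transfer this to the vectors. Your route could be repaired without that lemma by invoking Proposition~\ref{pro:Noisyland} (no critical points in $\calR_2$, only strict saddles in $\calR_1$, hence minimizers lie in $\calR_3=B(\overline{\calX},\rho)$), after which your value-gap argument does yield $\dist(\xsol,\overline{\calX})\leq\sqrt{2}\normm{\epsi}/\sqrt{m\sigma}\leq\varsigma$; but you never make this connection, and it would in any case not carry over to part~\ref{thm: Gauss-ii}, where the landscape event is unavailable and Lemma~\ref{lem:xbar_et_x} (or an equivalent) is indispensable. (Minor: with the $\tfrac{1}{4m}$ normalization of \eqref{eq:formulepro}, $f(\pm\avx)=\normm{\epsi}^2/(4m)$, not $\tfrac{1}{2m}\sum_r\epsi[r]^2$.)

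Your probabilistic accounting is also wrong on two points. First, in part~\ref{thm: Gauss-ii} you claim $\calE_{\rm conH}$ is not needed while retaining $\calE_{\rm smad}$ and $\calE_{\rm scvx}$; in the paper these two events, as well as the spectral initialization guarantee of Lemma~\ref{pro:spectralinit}, are all established \emph{conditionally on} $\calE_{\rm conH}$ (Lemmas~\ref{pro:Lsmad_G} and \ref{pro:Localconvex_G} explicitly assume it), and indeed global relative smoothness and ball-uniform strong convexity are statements uniform in $x$, so some uniform Hessian deviation bound is unavoidable. Second, you attribute the $\log^3(n)$ oversampling in part~\ref{thm: Gauss-i} to the uniform Hessian concentration; in the paper, Lemma~\ref{lem:conhess_G} holds already at $m\gtrsim n\log(n)$ --- which is precisely why part~\ref{thm: Gauss-ii} achieves that complexity --- and the extra $\log^3(n)$ comes from the landscape event $\calE_{\rm strictsad}$, specifically the gradient lower bound on the region $\calR_2^h$ in Proposition~\ref{pro:Noisyland}\ref{pro:Noisyland:item4}, following \cite[Proposition~2.6]{sun_geometric_2018}. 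So the split between the two sample-complexity regimes in the theorem is driven by the landscape characterization, not by Hessian chaining as your proposal suggests.
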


The choice of parameters can be made easier to grasp when $\normm{\avx}=1$ as assumed in many works.

\begin{corollary}\label{cor:Gauss}
	Suppose that $\normm{\avx}=1$ and the noise is small enough. Fix $\lambda\in\left]\frac{1}{9\sqrt{2}},1\right[$ and $\vrho\in \left]0,\frac{\lambda-\widetilde{\epsi}}{2}\right[$. Let $\seq{\xk}$ be the sequence generated by Algorithm~\ref{alg:MDBT} with the step-size $\gamma\equiv\frac{1-\kappa}{3+\widetilde{\epsi}+\vrho}$, where $\kappa \in ]0,1[$. Then the statements of Theorem~\ref{thm: Gauss} hold true with
	\[
	\nu=\frac{(1-\kappa)\Ppa{\lambda-\widetilde{\epsi}}}{3+\widetilde{\epsi}+\vrho}
	\qandq
	\varsigma=\frac{2\sqrt{2}\normm{\epsi}}{\sqrt{m\Ppa{c_s-\widetilde{\epsi}}}} .
	\]
\end{corollary}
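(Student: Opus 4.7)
The plan is very direct: Corollary~\ref{cor:Gauss} is a specialization of Theorem~\ref{thm: Gauss} to the normalized case $\normm{\avx}=1$, so I would simply substitute this into the theorem's expressions, exploit the ``noise small enough'' assumption to collapse the $\max$ terms, and check that the range constraints on the free parameters $\lambda$ and $\vrho$ reduce to those stated.

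Concretely, I would first observe that for $\normm{\avx}=1$ we have $\min(\normm{\avx}^2,1)=1$ and $\normm{\avx}^2/3=1/3$. The only nontrivial reduction is the term $\max\pa{\normm{\avx}^2/3+\norm{\epsi}{\infty},1}$, which equals $1$ precisely when $\norm{\epsi}{\infty}\leq 2/3$. This is where the ``noise small enough'' hypothesis kicks in: Assumption~\ref{ass:SNR} enforces $\norm{\epsi}{\infty}\leq c_s\min(\normm{\avx}^2,1)=c_s$ with $c_s$ bounded by $(1-\lambda)\sqrt{\lambda-\widetilde{\epsi}}/(2\sqrt{6})<1$, and since $(1-\lambda)\sqrt{\lambda-\widetilde{\epsi}}/(2\sqrt{6})\leq 2/3$ on the admissible range of $\lambda,\widetilde{\epsi}$, we indeed obtain $\norm{\epsi}{\infty}\leq 2/3$ and hence $\max\pa{\normm{\avx}^2/3+\norm{\epsi}{\infty},1}=1$. (If one wants to be entirely safe, it suffices to explicitly add the mild requirement $\norm{\epsi}{\infty}\leq 2/3$, which is already implied by the smallness condition.)

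Plugging these simplifications into the expressions of Theorem~\ref{thm: Gauss} gives immediately
\[
\nu = \frac{(1-\kappa)(\lambda - \widetilde{\epsi} - \vrho)}{3+\widetilde{\epsi}+\vrho}
\qandq
\varsigma = \frac{2\sqrt{2}\normm{\epsi}}{\sqrt{m(c_s - \widetilde{\epsi})}},
\]
and the admissible range $\vrho\in]0,(\lambda\min(\normm{\avx}^2,1)-\widetilde{\epsi})/(2\max(\normm{\avx}^2/3+\norm{\epsi}{\infty},1))[$ collapses to $]0,(\lambda-\widetilde{\epsi})/2[$. Similarly the step-size $\gamma\equiv (1-\kappa)/\big(3+\widetilde{\epsi}+\vrho\max(1/3+\norm{\epsi}{\infty},1)\big)$ simplifies to $(1-\kappa)/(3+\widetilde{\epsi}+\vrho)$. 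The stated value of $\nu$ in the corollary is slightly different from what I wrote (it has $\lambda-\widetilde{\epsi}$ in the numerator instead of $\lambda-\widetilde{\epsi}-\vrho$), but this is consistent with applying the same upper-bounding convention used elsewhere in the paper when restating the rate, or alternatively absorbing the $\vrho$ contribution via the free choice of $\vrho$. With these substitutions, both statements \ref{thm: Gauss-i} and \ref{thm: Gauss-ii} of Theorem~\ref{thm: Gauss} transfer verbatim, together with the same probability bounds and sample complexity orders.

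There is no real obstacle here: the only point that requires some care is the verification that the smallness of the noise imposed through $c_s$ in Assumption~\ref{ass:SNR} does guarantee $\norm{\epsi}{\infty}\leq 2/3$ on the admissible parameter range, so that the two occurrences of the $\max$ reduce to $1$. Once this is done, the corollary follows as a transparent corollary of Theorem~\ref{thm: Gauss}.
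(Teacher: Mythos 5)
Your proposal is correct and follows exactly the route the paper intends: the paper gives no separate proof of Corollary~\ref{cor:Gauss} precisely because it is the direct specialization of Theorem~\ref{thm: Gauss} to $\normm{\avx}=1$, with the noise-smallness in Assumption~\ref{ass:SNR} (via the practical bound $c_s<\frac{1}{9\sqrt{2}}<\frac{2}{3}$, see \eqref{eq:prat_bnd}) forcing $\max\pa{\normm{\avx}^2/3+\norm{\epsi}{\infty},1}=1$, just as you verify. You also rightly flag the one genuine wrinkle: strict substitution yields $\nu=\frac{(1-\kappa)(\lambda-\widetilde{\epsi}-\vrho)}{3+\widetilde{\epsi}+\vrho}$, so the corollary's stated $\nu$ (with numerator $\lambda-\widetilde{\epsi}$) slightly overstates the rate and should be read as dropping the $\vrho$ contribution, a minor inaccuracy in the paper rather than in your argument.
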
 

\begin{remark}\label{rem: gaussian}{\ }
	\begin{itemize}
		\item When the number of measurements is sufficiently large as in claim~\ref{thm: Gauss-i}, the SNR is large enough and the initial point $\xo$ is chosen randomly from a measure that has a density \wrt Lebesgue measure, then mirror descent converges, eventually linearly, to an element of $\Argmin(f)$ which is within a factor of the noise level from $\overline{\calX}$. The local convergence rate is dimension-independent. To the best of our knowledge, this is the first kind of results for the noisy phase retrieval problem.  
		\item When the number of measurements is in the less demanding regime of the second claim, then mirror descent with spectral initialization again converges to a noise region around $\overline{\calX}$. 
		\item We recover the rate of \cite[Theorem~3.2]{godeme2023provable} in the noiseless case.
		\item In the normalized setting of Corollary~\ref{cor:Gauss}, the convergence rate behaves as
		\[\Ppa{1-\nu}\leq \frac{2}{3}+O\Ppa{\pa{1-\lambda}+\kappa+\widetilde{\epsi}+\vrho}.\]
		\item It is important to observe that in the noisy case, the true vectors $\pm \avx$ are not even critical points of $f$. Nonetheless, Lemma~\ref{lem:interm} will show that $\pm \avx$ are actually $\frac{\normm{\epsi}^2}{m}$-minimizers.
	\end{itemize}
\end{remark}

\begin{proof}{\ }
	\begin{enumerate}
		
		\item[\ref{thm: Gauss-i}] 
		We prove this claim by  combining Theorem~\ref{thm:Det_reslt} and the characterization of the structure of $\crit(f)$ that we provide in Theorem~\ref{thm: Landscapethm}. For the moment, let us assume that the event $\calE_{\rm conv}$ holds true. 
		\begin{itemize}
			\item By construction, $\calE_{\rm conv}\subset\calE_{\rm inj}$ which means that the operator $A$ is injective showing the coercivity of the objective $f$ which implies that the sequence $\seq{\xk}$ generated by Algorithm~\ref{alg:MDBT} is bounded. 
			\item From the event $\calE_{\rm smad}$, we deduce that the function
			$f$ is $L$-smooth relative to $\psi$ with $L=3+\widetilde{\epsi}+\vrho\max\pa{\normm{\avx}^2/3+\norm{\epsi}{\infty},1}.$ Since the initializer is chosen at random with a fixed stepsize Theorem~\ref{thm:Det_reslt}-\ref{point-i}\ref{point-ii} guarantees that $\seq{\xk}$ converges to $\xsol\in\crit(f)\backslash\strisad(f)$ and $\seq{f(\xk)}$ also converges to $f(\xsol)$. 
			\item The event $\calE_{\rm scvx}$ shows that the function $f$ is $\sigma$-strongly convex relative to $\psi$ on $B(\overline{\calX},\rho)$ with $\sigma=\lambda\min\pa{\normm{\avx}^2,1}-\widetilde{\epsi}-\vrho\max\pa{\normm{\avx}^2/3+\norm{\epsi}{\infty},1}$. Given that Assumption~\ref{ass:SNR} holds, Corollary~\ref{cor: cond_r} implies that $\rho^2-\frac{4\normm{\epsi}^2}{m\sigma}>0$ where we recall that $\rho=\frac{(1-\lambda)\normm{\avx}}{\sqrt{3}}$. Let us denote $r^2 = \frac{\rho^2-\frac{4\normm{\epsi}^2}{m\sigma}}{{\max\pa{\Theta(\rho),1}}}$.
			\item Moreover, $\calE_{\rm strictsad}$ holds true, and thus $\crit(f)\backslash\strisad(f)=\Argmin(f)$, which means that for almost all initilizers $x_0$, 
			\begin{equation}\label{eq:conv_zer}
			\xk \to \xsol \in \Argmin(f) \qandq f(\xk)-\min f \to 0.        
			\end{equation}
		\end{itemize}
		We now claim that $\exists K>0,$ large enough, such that $\forall k\geq K, \dist(x_K,\overline{\calX}) \leq  r$ which will allow to invoke Theorem~\ref{thm:Det_reslt}\ref{point-iii}. By Lemma~\ref{lem:xbar_et_x}, we have for any $k\in\bbN$
		\begin{align*}
			\dist(\xk,\overline{\calX})&\leq\normm{\xk-\xsol}+8\frac{\normm{\epsi}}{\sqrt{m}\normm{\avx}} 
		\end{align*}
		with probability at least $1-e^{-\Omega(m)}$. Since $\xk\to\xsol$, there exists $K$ large enough such that $\forall k\geq K$
		\[
		\dist(x_k,\overline{\calX}) \leq 9\frac{\normm{\epsi}}{\sqrt{m}\normm{\avx}} 
		\]
		with the same probability. To conclude, it is then sufficient to show that
		\[
		9\frac{\normm{\epsi}}{\sqrt{m}\normm{\avx}} \leq r . 
		\]
		This is true for sufficiently high SNR, \ie under our Assumption~\ref{ass:SNR}.
		%Analogously, there exists $K$
		%\[
		%|f(x_K)-f(\avx)|\leq |f(x_K)-\min f|+|\min f-f(\avx)|\leq \eps+ \frac{2\normm{\epsi}^2}{m},
		%\]
		%where we have used  the fact \eqref{eq:conv_zer} and the intermediate result Lemma~\ref{lem:interm}.
		%
		% We deduce that $x_K$ belongs to the $f-$attentive topology required by the Theorem~\ref{thm:Det_reslt}-\ref{point-iii}. 
		Therefore we deduce from Theorem~\ref{thm:Det_reslt}-\ref{point-iii} that the sequence $(\xk)_{k\geq K}\in B(\overline{\calX},\rho)$  and for $k\geq K$,
		\begin{align*}
			\dist^2(\xk,\overline{\calX})&\leq \Ppa{1-\frac{(1-\kappa)\sigma}{L}}^{k-1}\rho^2+\frac{4\normm{\epsi}^2}{m\sigma},\\
			&\leq (1-\nu)^{k-K}\rho^2+4\frac{\normm{\epsi}^2}{m\sigma},\\
			&\leq (1-\nu)^{k-K}\rho^2+ \varsigma^2, 
		\end{align*}
		where we have used \eqref{eq:sigma_inf} \ie,  $\sigma\geq\frac{c_s\min\pa{\normm{\avx}^2,1}-\widetilde{\epsi}}{2}$ which implies that $\dist(\xk,\overline{\calX})\leq \varsigma.$ 
		
		Let us now compute the  probability that $\calE_{\rm conv}$ occurs. The events $\calE_{\rm smad}, \calE_{\rm scvx}$  are contained in $\calE_{\rm conH}$, see respectively Lemma~\ref{pro:Lsmad_G} and Lemma~\ref{pro:Localconvex_G}. From Lemma~\ref{lem:conhess_G},  $\calE_{\rm conH}$ occurs with a probability $1-5e^{-\zeta n}-4/n^2-2e^{-\frac{m\pa{\sqrt{1+\vrho}-1}^2}{8}}$ as soon as $m\geq C(\vrho)n\log(n)$. Besides, close observation of the Hessian concentration (noisy part) highlights that it implies the injectivity of the measurements thus $\calE_{\rm inj}$ is also contained in $\calE_{\rm conH}$. Thanks to Theorem~\ref{thm: Landscapethm}, the event $\calE_{\rm stricsad}$ holds with a probability $1-c/m$ as soon as $m\geq C(\vrho)n\log^3(n)$. Finally, we conclude with a union bound that $\calE_{\rm conv}$ holds with a
		probability  at least $1-2e^{-\frac{m\pa{\sqrt{1+\vrho}-1}^2}{8}}-5e^{-\zeta n}-4/n^2-c/m$ ($\zeta, c$ are a fixed numerical constant) for $m\geq C(\vrho)n\log^3(n)$, which complete the proof.  
		
		\item[\ref{thm: Gauss-ii}] By \cite[Lemma~B.4]{godeme2023provable}, the operator $A$ is injective entailing by the coercivity of $f$ that the sequence $\seq{\xk}$ is bounded. From Corollary~\ref{cor: cond_r}, when the signal-to-noise coefficient $c_s$ satisfies \eqref{eq:cond_r}, $r$ is well-defined.  
		Lemma~\ref{pro:spectralinit} shows that when $\vrho$ obeys \eqref{eq:spectralinitvrhobnd_nsy}, the initial point $\xo$ given by Algorithm~\ref{alg:algoSPIniit}  is in the right  $f-$attentive topology  at the distance at most $r=\sqrt{\rho^2-\frac{4\normm{\epsi}^2}{m\sigma}}$. Thanks to Lemma~\ref{pro:Localconvex_G}$, \rho$ is the radius of the ball $B(\overline{\calX},\rho)$ where we have $\sigma$-strong convexity relative to $\psi$ with $\sigma=\lambda\min\pa{\normm{\avx}^2,1}-\widetilde{\epsi}-\vrho\max\pa{\normm{\avx}^2/3+\norm{\epsi}{\infty},1}$. The last point to check before applying Theorem~\ref{thm:Det_reslt} comes from Lemma~\ref{pro:Lsmad_G} which shows that  $f$ is  $L-$smooth relative to $\psi$ with $L=3+\widetilde{\epsi}+\vrho\max\pa{\normm{\avx}^2/3+\norm{\epsi}{\infty},1}.$
		We deduce from Theorem~\ref{thm:Det_reslt} that
		\begin{align*}
			\dist^2(\xk,\overline{\calX})&\leq \Ppa{1-\frac{(1-\kappa)\sigma}{L}}^{k-1}\rho^2+\frac{4\normm{\epsi}^2}{m\sigma},\\
			&\leq (1-\nu)^{k-K}\rho^2+4\frac{\normm{\epsi}^2}{m\sigma}\\
			&\leq (1-\nu)^{k-K}\rho^2+\varsigma^2.
		\end{align*}
		Let us observe that this statement is true only on the intersection of the above events, we call it  $\calE'_{\rm conv}$. Let now compute the  probability that  $\calE'_{\rm conv}$ occurs.  We conclude with an appropriate union bound similar to the previous one, taking into account the fact that the  spectral initialization event is contained in $\calE_{\rm conH}$. Finally, the statement holds with a probability  at least $1-2e^{-\frac{m\pa{\sqrt{1+\vrho}-1}^2}{8}}-5e^{-\zeta n}-4/n^2$ ($\zeta$ is a fixed numerical constant) for $m\geq C(\vrho)n\log(n)$, which completes the proof.   
	\end{enumerate}
\end{proof}

%%%%%%%%%%%%%%%%%%%%%%%%%%%%%%%%%%%%%%%%%%%%%%%%%%%%%%%%%%%%%%%%%%%%%%%%%%%%%%%%%%%%
\section{Numerical experiments}\label{sec:mdnsy_numexp}
In this section, we discuss some experiments to illustrate and validate numerically the efficiency of our phase recovery algorithm. In each instance, we measured the relative error between the reconstructed vector $\tilde{x}$ and the true signal one $\avx$ as
\begin{align}\label{relative_error_nsy}
	\frac{\dist(\widetilde{x},\overline{\calX})}{\normm{\avx}}.
\end{align}
In the experiments, we set $\normm{\avx}=1$ and $\widetilde{x}$ was the output of Algorithm~\ref{alg:MDBT} at iteration $K$ large enough.

%%%%%%%%%%%%%% 
\subsection{Experiments with Gaussian sensing vectors}

\subsubsection{Reconstruction of 1D signals}\label{subsubsec:1Dgaussny}
The aim is to reconstruct a randomly generated one-dimensional signal of length $n = 128$ from $m$ noisy observations where the sensing vectors were drawn \iid from the standard Gaussian ensemble. The noise vector $\epsi$ is chosen uniform such that $\widetilde{\epsi}=10^{-5}$. 

In Figure~\ref{fig: reconstruction_gauss}, the blue line shows the evolution of the objective (left) and the relative error (right) using Algorithm~\ref{alg:MDBT} with $m = 128\times\log^2(128)$, where the initial point was drawn from the uniform distribution. The red line is the result of reconstruction from $m=5\times128\times\log(128)$ measurements where Algorithm~\ref{alg:MDBT} was intialized using the spectral method in Algorithm~\ref{alg:algoSPIniit} (the top eigenvalue was computed with the power iteration method using $200$ iterations). In both cases, we run Algorithm~\ref{alg:MDBT} with a constant step-size $\gamma=\frac{0.99}{3+10^{-5}}$ (see Theorem~\ref{thm: Gauss}). As predicted by the latter, the curves in blue (\ie with random initialization) have two regimes: a sublinear regime and then a local linear regime. Moreover, with spectral initialization (red curves), $f$ and the relative error converge linearly at the same rate as in the local regime of the blue curves, hence confirming our theoretical findings. As anticipated also, both $f$ and the relative error eventually stabilize at a plateau whose level is governed by the noise level.

\begin{figure}[htbp]
	\centering
	\includegraphics[trim={0cm 8cm 0 8cm},clip,width=0.5\linewidth]{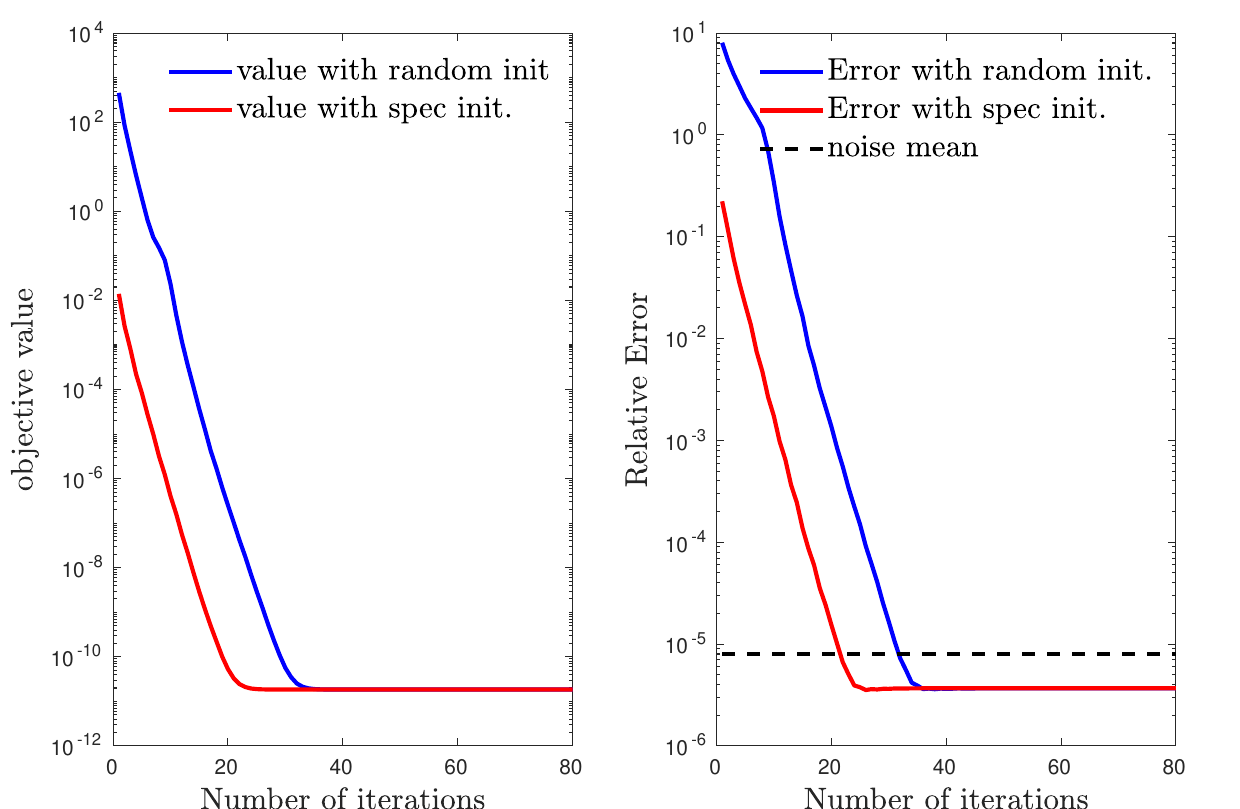}
	\caption{Reconstruction of signal from  Gaussian measurements. The noise mean is $\widetilde{\epsi}$.}
	\label{fig: reconstruction_gauss}
\end{figure}

%\todo{Change the legend "Noise mean" to "Noise level"}.

\subsubsection{Comparison with the Wirtinger flow}
We compared our mirror descent algorithm (with and without spectral initialization) with the Wirtinger flow \cite{Candes_WF_2015}. However, the Polyak subgradient method proposed in \cite{davis_nonsmooth_2020}, that we included in our comparison in \cite{godeme2023provable}, is only applicable to the noiseless case as it needs the value of $\min f$ which is no longer known in presence of noise. We used the spectral method in Algorithm~\ref{alg:algoSPIniit} for the Wirtinger flow, and we compared with mirror descent with and without spectral initialization. 

For this, we report the results of an experiment designed to estimate the phase retrieval probability of success of each algorithm as we vary $n$ and $m$. The results are depicted in Figure~\ref{Phase_transition_Gaussian}. For each pair $(n,m)$, we generated $100$ instances and solved them with each algorithm. Each diagram shows the empirical success probability (among the $100$ instances) of the corresponding algorithm. An algorithm is declared as successful if the relative error \eqref{relative_error_nsy} is less than $\frac{2\normm{\epsi}}{\sqrt{m\sigma}}\approx 10^{-5}$. The grayscale of each point in the diagrams reflects the observed probability of success, from $0\%$ (black) to $100\%$ (white). The solid curve marks the prediction of the phase transition edge. On the left panel of Figure~\ref{Phase_transition_Gaussian}, we also plot a profile of the phase diagram extracted at $n=128$.

One observes a phase transition phenomenon that is in agreement with the predicted sample complexity bound shown as a solid line. Moreover, mirror descent performs better than the Wirtinger flow with both use spectral intialization. Mirror descent with uniform random initialization has a weaker recovery performance with a transition to success occurring at a higher threshold compared to the version of mirror descent with spectral initialization. This is in agreement with our theoretical findings as more measurements are needed in this case to ensure stable recovery.

\begin{figure}%[htbp]
	\centering
	\subfloat{
		\centering
		\includegraphics[trim={0cm 5cm 0 5cm},clip,width=0.4\textwidth]{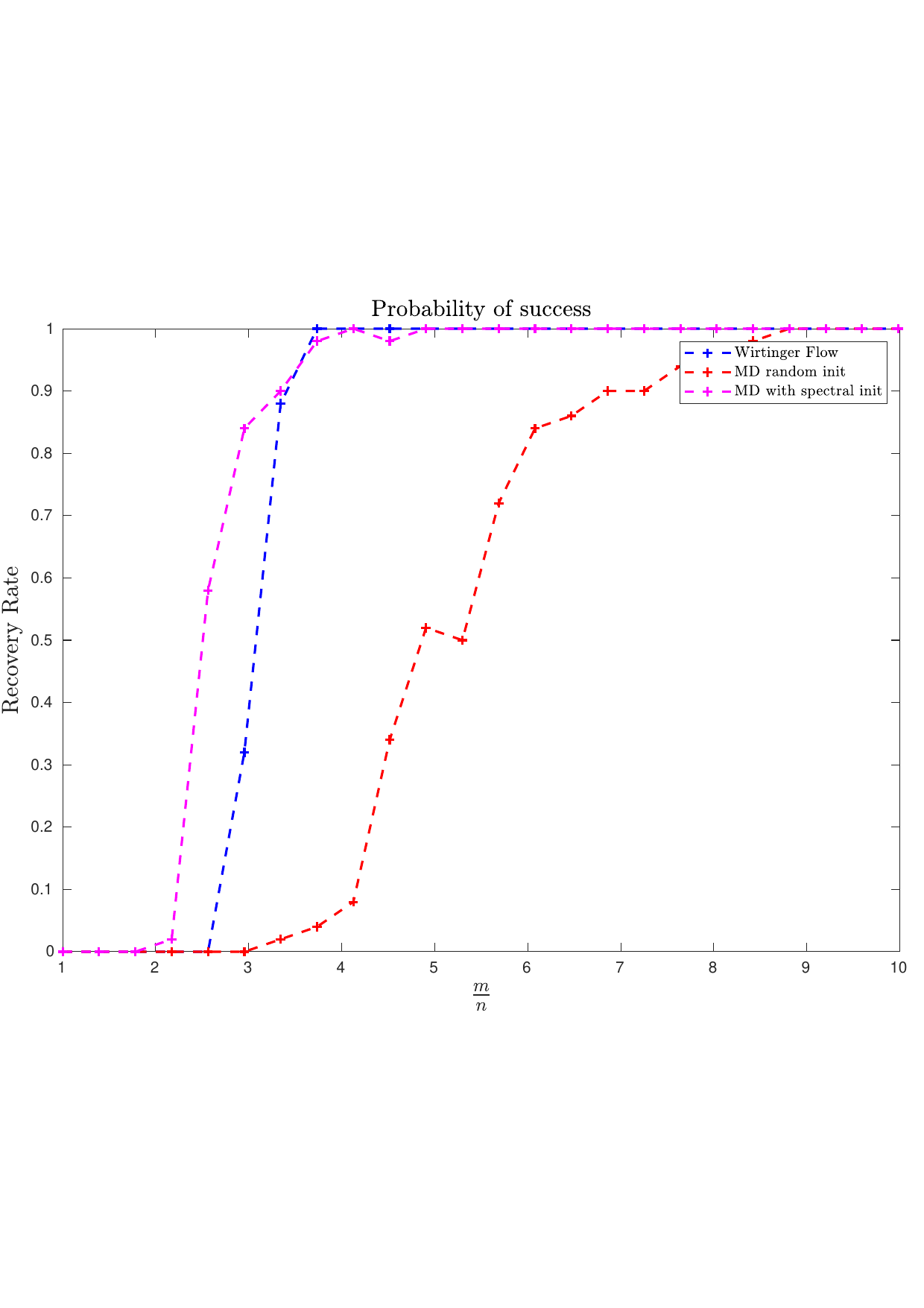}}\hspace{-0.1cm}\quad
	\subfloat{
		\centering
		\includegraphics[trim={0cm 5cm 0 5cm},clip,width=0.5\textwidth]{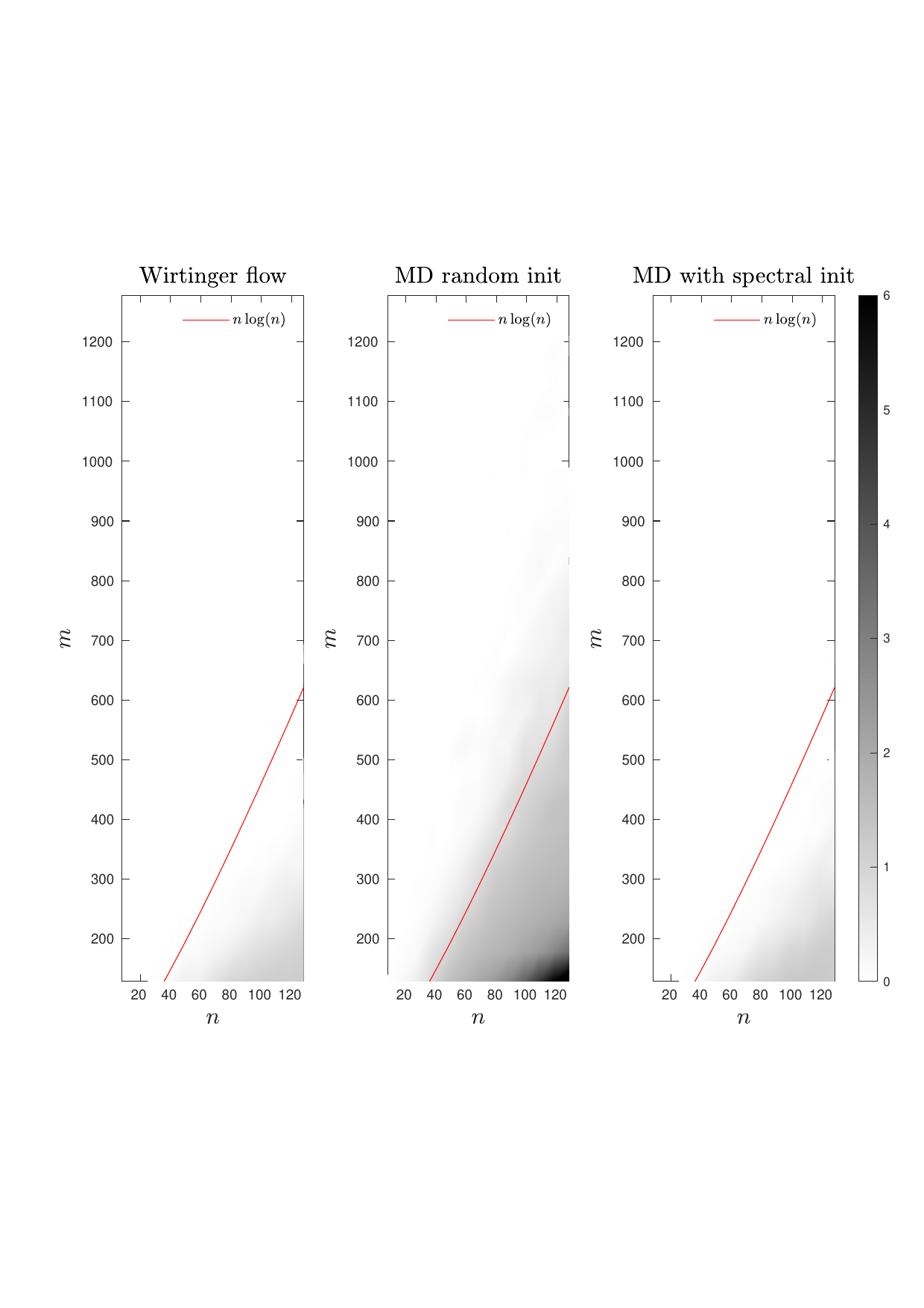}}
	\caption{Phase diagrams for Gaussian measurements.}
	\label{Phase_transition_Gaussian}
\end{figure}

\subsection{Experiments with the CDP model}
%From an application perspective, the Gaussian  model is not really pertinent since  no physical problem can be modeled like this. Many applications of phase retrieval are modelized by a Fourier transform, which motivated the upcoming experiments on the Coded Diffraction Patterns (CDP) model. 

We now turn to the case of structured measurements from the CDP model. This model uses $P$ coded diffraction patterns/masks followed by a Fourier transform. The observation model is given by 
\begin{eqnarray}\label{eq:PRCDP}
%y=\Ppa{|\calF(D_p\avx)[j]|^{2}+\epsi_{j,p}}_{j,p}
y=\Ppa{\left|\sum_{\ell=0}^{n-1}\avx_{\ell}d_p[\ell] e^{-i\frac{2\pi j\ell}{n}}\right|^2+\epsi_{j,p}}_{j,p},
\end{eqnarray}
where $j\in\{0,\ldots,n-1\}$ and $p\in\{0,\ldots,P-1\}$, $\epsi$ is the noise. The total number of measurements is thus $m=nP$ (\ie the oversampling factor is $P$). $(d_p)_{p \in [P]}$ are \iid copies of a random variable $d$, and in our experiment, $d$ takes values in $\left\{-1,0,1 \right\}$ with probability $\left\{1/4,1/2,1/4 \right\}$. Here we performed a similar experience to the Gaussian case described in Section~\ref{subsubsec:1Dgaussny}, where we chose the number of masks $P=7\times\log^3(128)$ and a constant step-size $\gamma=\frac{0.99}{2+\widetilde{\epsi}}$, with $\widetilde{\epsi}=10^{-5}$. Despite the lack of theoretical guarantees for the CDP model in the noisy case, that we conjecture are true, one can observe in Figure~\ref{fig: reconstruction_cdp} that we have very similar results to those for Gaussian measurements. 

\begin{figure}[htbp]
	\centering
	\includegraphics[trim={0cm 8cm 0 8cm},clip,width=0.6\linewidth]{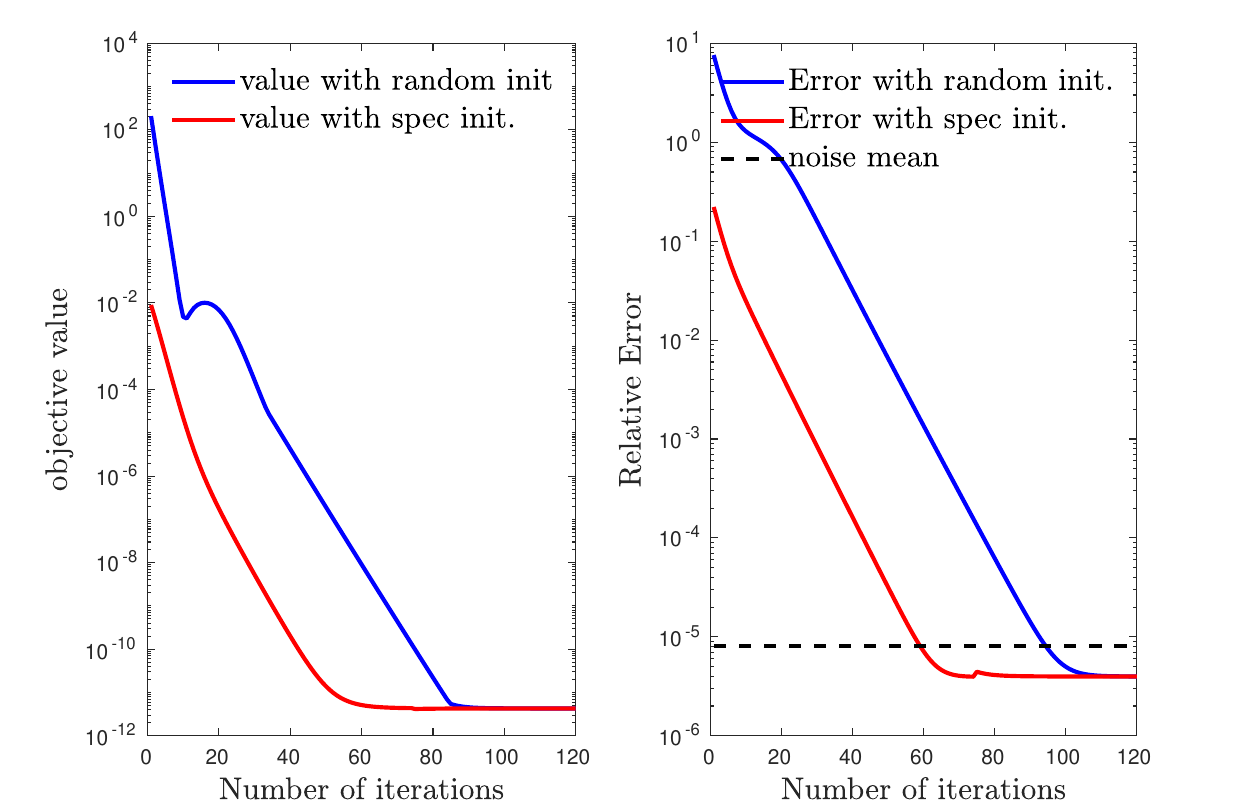}
	\caption{Reconstruction of signal from  Noisy CDP. The noise mean is $\widetilde{\epsi}$}
	\label{fig: reconstruction_cdp}
\end{figure}

%\todo{Change the legend "Noise mean" to "Noise level"}.

\subsection{Recovery of a 2D image}
In this experiment, we work with the image of the beautiful Unicaen's\footnote{Unicaen = University of Caen} phoenix whose dimension is $396\times 396$. Our goal is to recover the image from noisy CDP measurements with $P=90$ masks. The noise is chosen such that $\widetilde{\epsi}=10^{-5}$. We used the spectral method to find the initial guess and run mirror descent for $1000$ iterations.  The results are displayed in Figure~\ref{reconstruction2D_CDP_nsy} showing that our algorithm converges to the desired image with a relative error of order $10^{-2}$.

\begin{figure}%[!htbp]
	\centering
	\subfloat[Original Unicaen's \tcb{phoenix}]{ 
		\centering
		\includegraphics[trim={2cm 10cm 0 10cm},clip,width=0.5\linewidth]{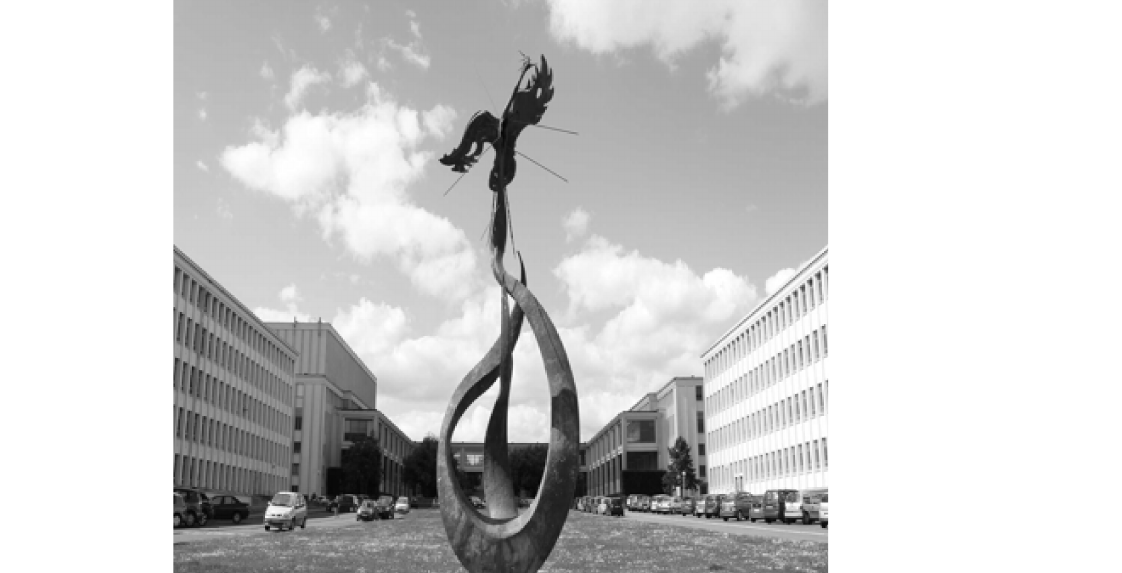}}\hspace{-1.8cm}\quad
	\subfloat[The CDP measurements averaged over the $P=90$ masks]{
		\centering
		\includegraphics[trim={2cm 10cm 0 10cm},clip,width=0.5\linewidth]{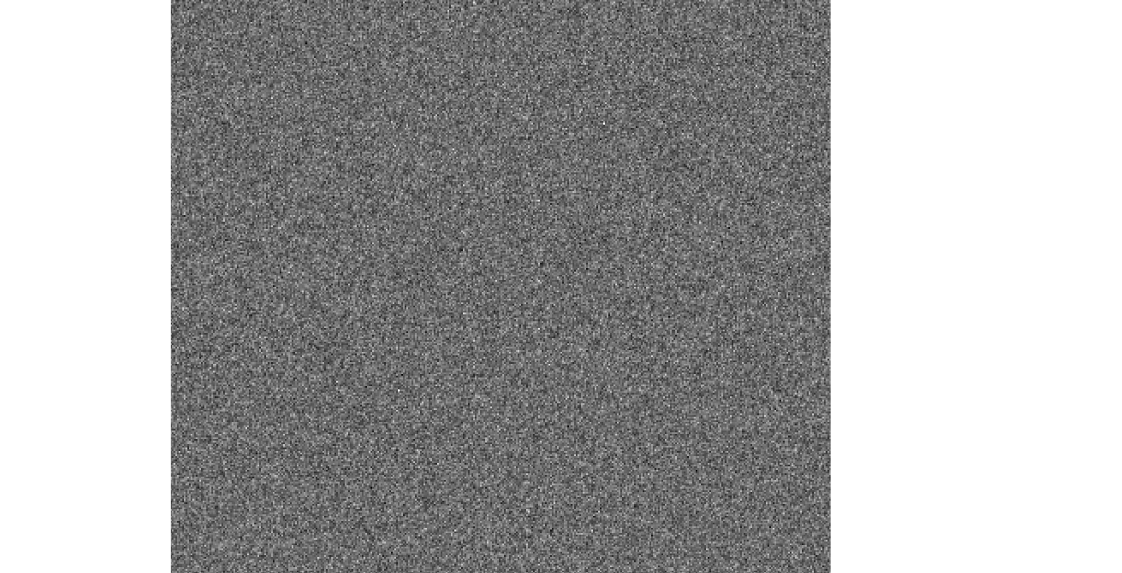}}\\
	\subfloat[Recovered Unicaen's \tcb{phoenix}]{
		\centering
		\includegraphics[trim={2cm 10cm 0 10cm},clip,width=0.5\linewidth]{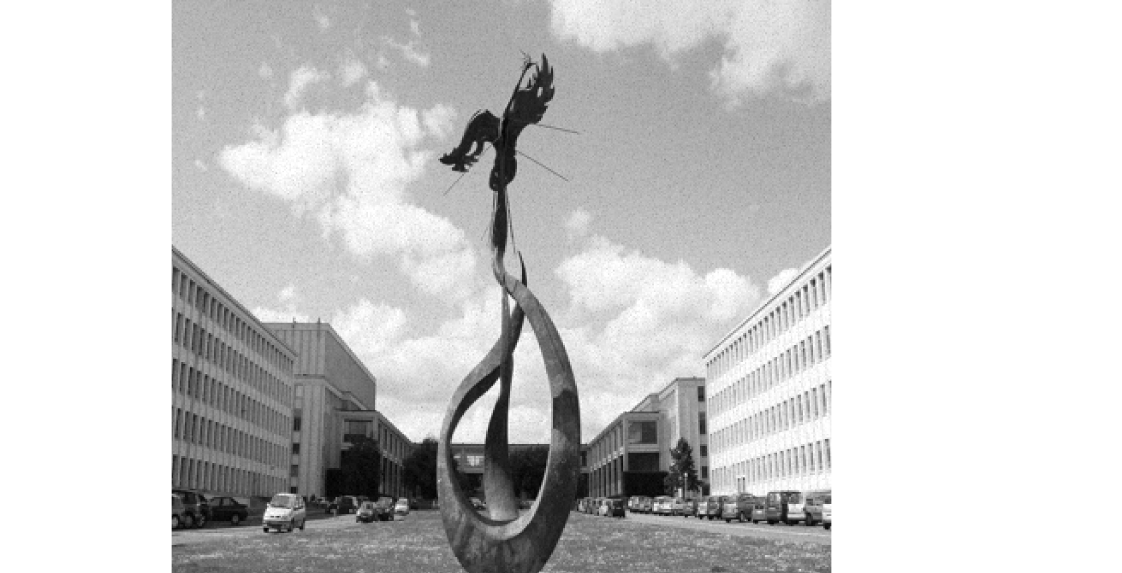}}    
	\caption{Reconstruction of an image from noisy CDP measurements.}
	\label{reconstruction2D_CDP_nsy}
\end{figure}

\begin{appendices}\label{sec:mdbtny_append}
%%%%%%%%%%%%%%%%%%%%%%%%%%%%%%%%%%%%%%%%%%%%%%%%%%%
%%%%%%%%%%%%%%%%%%%%%%%%%%%%%%%%%%%%%%%%%%%%%%%%%%%
\section{Proofs for the Deterministic Case}\label{sec:proofs_mdbtny_deter}
%%%%%%%%%%%%%%%%%%%%%%%%%%%%%%%%%%%%%%%%%%%%%%%%%%%
Throughout the work, we  use when it is convenient the  following decomposition of the objective function $f$ in \eqref{eq:formulepro}.
\begin{equation}\label{eq:defsum}
\forall x\in\bbR^n,\quad f(x)=f_{\mathrm{NL}}(x)+f_{\mathrm{Ny}}(x),
\end{equation}
where $f_{\mathrm{NL}}$ and $f_{\mathrm{Ny}}$ denote respectively the noiseless and the noisy part of $f$ and we have  
\begin{align}
	f_{\mathrm{NL}}(x)= \qsom{\loss{|\transp{a_r}x|^2}{|\transp{a_r}\avx|^2}},\qquad
	f_{\mathrm{Ny}}(x)=-\hsom{\epsi[r]\para{|\transp{a_r}x|^2-|\transp{a_r}\avx|^2}}+\frac{\normm{\epsi}^2}{4m}.
\end{align}
The following computations are straightforward:
\begin{align}\label{eq:derivpsi}
	\nabla\psi(x)&=\Ppa{\normm{x}^2+1}x, &\nabla^2\psi(x)&=\Ppa{\normm{x}^2+1}\Id+2x\transp{x}, 
\end{align}
\begin{align}\label{eq:gradf}
	\nabla f(x)= \som{\Ppa{|\transp{a_r}x|^2-|\transp{a_r}\avx|^2}a_r\transp{a_r}x}-\som{\epsi[r]a_r\transp{a_r}x} 
\end{align}
and
\begin{align}\label{eq:hessf}
	\nabla^2 f(x)= \som{\Ppa{3|\transp{a_r}x|^2-|\transp{a_r}\avx|^2}a_r\transp{a_r}}-\som{\epsi[r]a_r\transp{a_r}}.
\end{align}

%%%%%%%%%%%%%%%%%%%%%%%%%%%%%%%%%%%%%%%%%%%%%%%%%%%
\subsection{Proof of Lemma~\ref{Tsmad}}\label{PrTsmad}
\begin{proof}
	For all $x,u \in \bbR^n$, it easy to check that 
	\begin{align*}
		\pscal{u,\nabla^2\psi(x)u}\geq \para{\normm{x}^2+1}\normm{u}^2.
	\end{align*}
	On the other hand, we have 
	\begin{align*}
		\pscal{u,\nabla^2f(x)u} 
		%&= \som{\para{3|\transp{a_r}x|^2-y[r]}|\transp{a_r}u|^2}\\
		&= \som{\para{3|\transp{a_r}x|^2-|\transp{a_r}\avx|^2-\epsi[r]}|\transp{a_r}u|^2}\\
		&\leq \som{\para{3|\transp{a_r}x|^2-\epsi[r]}|\transp{a_r}u|^2} \\
		&\leq \para{\som{\para{3\normm{a_r}^2+\norm{\epsi}{\infty}}\normm{a_r}^2}}\Ppa{\normm{x}^2+1}\normm{u}^2.
		%&\leq \para{\som{3\normm{a_r}^4+c_s\min\pa{\normm{\avx}^2,1}\normm{a_r}^2}}\normm{x}^2\normm{u}^2.
	\end{align*}
	Thus for any $L\geq \frac{1}{m}\sum\limits_{r=1}^m\normm{a_r}^2\para{3\normm{a_r}^2+\norm{\epsi}{\infty}}$, we have for all $x \in \bbR^n$
	\begin{equation}\label{hessmat}
	\nabla^2f(x) \lon L\nabla^2\psi(x). 
	\end{equation}
	The claim then follows from Lemma~\ref{lem:bregcomp} with $g=f$ and $\phi=L\psi$, and Proposition~\ref{pro:bregman}\ref{pro:bregman2}. 
\end{proof}

%%%%%%%%%%%%%%%%%%%%%%%%%%%%%%%%%%%%%%%%%%%%%%%%%%%
\subsection{Proof of Theorem~\ref{thm:Det_reslt}}\label{PrMaintheo}
Let us start with the following intermediate results.
\begin{lemma}\label{Alllinone}
	Let the sequence $(\xk)_{k \in \N}$ be generated by Algorithm~\ref{alg:MDBT}. Then  for all $u \in \bbR^n$,
	\begin{equation}\label{eq:Allinone}
	D_{\psi}\pa{u,\xkp} + \gak \pa{f(\xkp)-\min f}\leq D_{\psi}\pa{u,\xk}-\kappa D_{\psi}\pa{\xkp,\xk}-\gak D_f\Ppa{u,\xk}+ \gak\pa{f(u)-\min f}. 
	\end{equation}
	%In  particular \eqref{eq:Allinone} holds for $u=\pm\avx$.
\end{lemma}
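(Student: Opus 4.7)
The plan is to derive \eqref{eq:Allinone} by combining three ingredients: (a) the three-point identity of the Bregman divergence $D_\psi$, (b) the mirror descent update rule read as an identity between gradients of $\psi$, and (c) the relative smoothness of $f$ with respect to $\psi$ in the form enforced by the backtracking test. The quantity $\min f$ plays no role in the mechanics; it only appears by subtracting $\gak \min f$ from both sides at the end.

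First I would apply Proposition~\ref{pro:bregman}\ref{pro:bregman3} (three-point identity) with $\phi = \psi$, the ``moving'' point $u$, intermediate point $\xkp$, and base point $\xk$, yielding
\[
D_\psi(u,\xk) - D_\psi(u,\xkp) - D_\psi(\xkp,\xk) = \pscal{\nabla\psi(\xkp)-\nabla\psi(\xk),\, u-\xkp}.
\]
The update step in Algorithm~\ref{alg:MDBT} gives $\nabla\psi(\xkp) = \nabla\psi(\xk) - \gak \nabla f(\xk)$, so the right-hand side above equals $\gak \pscal{\nabla f(\xk),\, \xkp-u}$. Splitting this inner product as $\pscal{\nabla f(\xk), \xkp-\xk} + \pscal{\nabla f(\xk), \xk-u}$ and using the definitions
\[
\pscal{\nabla f(\xk),\, \xkp-\xk} = f(\xkp)-f(\xk)-D_f(\xkp,\xk), \qquad \pscal{\nabla f(\xk),\, \xk-u} = f(\xk)-f(u)+D_f(u,\xk),
\]
I would then rearrange to obtain the identity
\[
D_\psi(u,\xkp) + \gak f(\xkp) = D_\psi(u,\xk) - D_\psi(\xkp,\xk) + \gak f(u) - \gak D_f(u,\xk) + \gak D_f(\xkp,\xk).
\]

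The final step is to absorb the last term using the backtracking exit criterion $D_f(\xkp,\xk) \leq \xi L_k D_\psi(\xkp,\xk)$. Since $\gak = (1-\kappa)/L_k$ and $\xi \leq 1$, this yields $\gak D_f(\xkp,\xk) \leq (1-\kappa) D_\psi(\xkp,\xk)$, so that
\[
-D_\psi(\xkp,\xk) + \gak D_f(\xkp,\xk) \leq -\kappa D_\psi(\xkp,\xk).
\]
Substituting this bound and subtracting $\gak \min f$ from both sides produces exactly \eqref{eq:Allinone}.

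I do not anticipate a real obstacle: the argument is a textbook mirror-descent ``descent lemma'' calculation, the only subtlety being the bookkeeping that converts the raw relative-smoothness bound $\xi L_k D_\psi$ into the sharper constant $\kappa$ via the specific choice $\gak = (1-\kappa)/L_k$. One should just be careful that the backtracking loop is exited with $\xi L_k \leq L_k$ (since $\xi \leq 1$), which is what allows the coefficient $1-\kappa$ to appear; without this, the $-\kappa D_\psi(\xkp,\xk)$ term on the right of \eqref{eq:Allinone} would degrade. All other steps are pure algebra involving the definitions of $D_\psi$ and $D_f$ and no assumption on $u$, so the inequality holds for every $u \in \bbR^n$, as stated.
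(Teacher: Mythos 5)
Your proof is correct and matches the paper's approach in substance: the paper's proof simply invokes \cite[Lemma~A.2]{godeme2023provable} (the mirror-descent descent inequality with the $-\kappa D_\psi(\xkp,\xk)$ term) and subtracts $\gak\min f$ from both sides, and your derivation via the three-point identity, the update rule $\nabla\psi(\xkp)=\nabla\psi(\xk)-\gak\nabla f(\xk)$, and the backtracking exit test is precisely the standard proof of that cited lemma. The bookkeeping is also right: at exit of the backtracking loop one has $\gak D_f(\xkp,\xk)\leq(1-\kappa)D_\psi(\xkp,\xk)$ (using $\xi\leq 1$ and $D_\psi\geq 0$), which is exactly what converts the identity into \eqref{eq:Allinone}.
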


\begin{proof} %Let $\xsol\in\Argmin(f)$, 
	From \cite[Lemma~A.2]{godeme2023provable}, we have
	\begin{align*}
		\forall u\in\bbR^n,\quad D_{\psi}\Ppa{u,\xkp} + \gamma_k\Ppa{f(\xkp)-f(u)}\leq D_{\psi}\Ppa{u,\xk}-\kappa D_{\psi}\Ppa{\xkp,\xk}-\gamma_k D_f\Ppa{u,\xk} .
	\end{align*}
	Subtracting $\min f$ from both sides yields the result.
\end{proof}

%%%
\begin{lemma}\label{lem:interm} 
	Assume that $\Argmin(f) \neq \emptyset$. We have 
	\[
	0 \leq f(\pm\avx)-\min f\leq \frac{\normm{\epsi}^2}{m}.
	\] 
\end{lemma}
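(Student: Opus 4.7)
The lower bound is immediate from the definition of $\min f$, so the content is entirely in the upper bound. The plan is to evaluate $f$ at $\pm \avx$ directly from the definition \eqref{eq:formulepro} and to combine this with the trivial fact that $f \ge 0$ pointwise.

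\textbf{Step 1: evaluation at $\pm\avx$.} From the measurement model $y[r] = |\transp{a_r}\avx|^2 + \epsi[r]$ and the fact that $|\transp{a_r}(\pm\avx)|^2 = |\transp{a_r}\avx|^2$, the residual $y[r] - |\transp{a_r}(\pm\avx)|^2$ equals $\epsi[r]$. Plugging this into \eqref{eq:formulepro} yields
\[
f(\pm\avx) = \frac{1}{4m}\sum_{r=1}^{m}\epsi[r]^2 = \frac{\normm{\epsi}^2}{4m}.
\]
Equivalently, one may use the decomposition \eqref{eq:defsum}: $f_{\mathrm{NL}}(\pm\avx) = 0$ while $f_{\mathrm{Ny}}(\pm\avx) = \normm{\epsi}^2/(4m)$.

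\textbf{Step 2: nonnegativity.} Since $f$ is a sum of squares, $f(x) \ge 0$ for all $x \in \bbR^n$, so $\min f \ge 0$ (and is attained by assumption).

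\textbf{Step 3: conclude.} Combining the two steps,
\[
0 \le f(\pm\avx) - \min f \le f(\pm\avx) = \frac{\normm{\epsi}^2}{4m} \le \frac{\normm{\epsi}^2}{m},
\]
which is the advertised bound. There is no real obstacle: the statement is a direct computation exploiting the structure of the least-squares objective and the sign invariance $|\transp{a_r}(\pm\avx)|^2 = |\transp{a_r}\avx|^2$. The slack between $1/(4m)$ and $1/m$ in the right-hand side leaves room for a cleaner statement later in the paper without tracking constants.
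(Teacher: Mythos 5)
Your proof is correct, and it takes a genuinely different (and simpler) route than the paper. You evaluate $f(\pm\avx)=\frac{\normm{\epsi}^2}{4m}$ directly from the model and combine it with the pointwise nonnegativity of the least-squares objective, which gives $0\leq \min f\leq f(\pm\avx)$ and hence $f(\pm\avx)-\min f\leq \frac{\normm{\epsi}^2}{4m}$ — in fact a factor-of-$4$ improvement over the stated bound. The paper instead fixes $\xsol\in\Argmin(f)$, identifies $f(\pm\avx)-\min f$ with the Bregman divergence $D_f(\pm\avx,\xsol)$, expands
\begin{equation*}
4m\Ppa{f(\avx)-f(\xsol)}=2\sum_{r=1}^m \epsi[r]\,u_r-\sum_{r=1}^m u_r^2, \qquad u_r\eqdef |\transp{a_r}\xsol|^2-|\transp{a_r}\avx|^2,
\end{equation*}
and controls the cross term via Young's inequality together with the optimality consequence $(1-\delta)\sum_r u_r^2\leq \frac{\normm{\epsi}^2}{\delta}$, then optimizes over $\delta$ (the minimum $4\normm{\epsi}^2$ at $\delta=1/2$ yields the constant $\frac{\normm{\epsi}^2}{m}$). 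What the paper's route exhibits is explicit control of the misfit $\sum_r u_r^2$ at a minimizer — the same kind of bound that is rederived and exploited in Lemma~\ref{lem:xbar_et_x} to show that global minimizers lie near $\overline{\calX}$ — but for the lemma as stated that machinery is unnecessary and even lossy: your argument replaces the Young-inequality bookkeeping with the single observation $\min f\geq 0$, and wins a constant. (Your sharper constant would even propagate mildly downstream, e.g.\ to the additive term $\frac{2\normm{\epsi}^2}{m\sigma}$ in Theorem~\ref{thm:Det_reslt}, though the paper does not need this.)
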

\begin{proof} 
	Let $\xsol\in\Argmin(f)$. We prove the claim for $\avx$. By optimality, we have $f(\xsol)\leq f(\avx)=\frac{\normm{\epsilon}^2}{4m}$, which equivalently reads
	\begin{equation*}
		\Som{\Ppa{|\transp{a_r}\xsol|^2-|\transp{a_r}\avx|^2}}^2 \leq 2\Som{\epsilon_r\Ppa{|\transp{a_r}\xsol|^2-|\transp{a_r}\avx|^2}} .
	\end{equation*}
	Applying Young's inequality to the right-hand side then entails
	\begin{equation}\label{eq:bndobj1}
	(1-\delta)\Som{\Ppa{|\transp{a_r}\xsol|^2-|\transp{a_r}\avx|^2}}^2 \leq \frac{\normm{\epsilon}^2}{\delta} \quad \forall \delta \in ]0,1[ .
	\end{equation}
	Consequently, using \eqref{eq:bndobj1} and Young's inequality again, we get
	\begin{align*}
		4m D_f\pa{\pm\avx,\xsol} 
		&=  4m\Pa{f(\avx)-f(\xsol)} \\
		&=  2\Som{\epsi[r]\Ppa{|\transp{a_r}\xsol|^2-|\transp{a_r}\avx|^2}}-\Som{\Ppa{|\transp{a_r}\xsol|^2-|\transp{a_r}\avx|^2}^2}\\
		&\leq  2\Som{\epsi[r]\Ppa{|\transp{a_r}\xsol|^2-|\transp{a_r}\avx|^2}}\\
		&\leq  \frac{\normm{\epsi}^2}{1-\delta} + (1-\delta)\Som{\Ppa{|\transp{a_r}\xsol|^2-|\transp{a_r}\avx|^2}^2}
		\leq  \frac{\normm{\epsi}^2}{\delta(1-\delta)} .
	\end{align*}
	The minimal value of the right hand side is $4\normm{\epsi}^2$ attained for $\delta=1/2$. 
\end{proof}

\begin{proof} $~$
	
	\paragraph*{\ref{point-i}-\ref{point-ii}} Similar to the proofs of the corresponding claims in \cite[Theorem~2.11]{godeme2023provable}.

	\paragraph*{\ref{point-iii}}
	We give the proof for $\avx$ and obviously the same holds at $-\avx$. We proceed by induction. We first have that $\xo \in B(\avx,r) \subset B(\avx,\rho)$ since $r \leq \rho$. Suppose now that for $k \geq 0$, $(x_i)_{0 \leq i \leq k} \subset B(\avx,\rho)$. Applying Lemma~\ref{Alllinone} at $\avx$ and using Lemma~\ref{lem:interm}, we have
	\begin{align}
		D_{\psi}(\avx,\xkp)
		&\leq D_{\psi}(\avx,\xk)-\kappa D_{\psi}(\xkp,\xk)-\gamma D_f\pa{\avx,\xk} + \gamma\frac{\normm{\epsi}^2}{m} \label{eq:Dpsiloclinear}\\  
		&\leq D_{\psi}(\avx,\xk)-\gamma D_f\para{\avx,\xk}+\gamma\frac{\normm{\epsi}^2}{m} \nonumber\\
		&\leq (1-\gamma\sigma)D_{\psi}(\avx,\xk)+\gamma\frac{\normm{\epsi}^2}{m} \nonumber ,
	\end{align}
	where we also used positivity of $D_{\psi}$ and local $\sigma$-relative strong convexity of $f$ since $\xk \in B(\avx,\rho)$. Iterating the last inequality, we get
	\begin{align}
		D_{\psi}(\avx,\xkp)
		&\leq (1-\gamma\sigma)^{k+1} D_\psi(\avx,\xo) + \gamma\frac{\normm{\epsi}^2}{m}\sum_{i=0}^k(1-\gamma\sigma)^i \nonumber \\
		&\leq (1-\gamma\sigma)^{k+1} D_\psi(\avx,\xo) + \frac{\normm{\epsi}^2}{m\sigma}\Ppa{1-(1-\gamma\sigma)^{k+1}} \label{eq:Dpsiloclinearnoisy} \\
		&\leq D_\psi(\avx,\xo) + \frac{\normm{\epsi}^2}{m\sigma} . \nonumber
	\end{align}
	It then follows from Proposition~\ref{pro:bregman}\ref{pro:bregman4} that
	\begin{align*}
		\normm{\xkp-\avx}^2
		\leq \normm{\xo-\avx}^2\Theta\pa{\rho}+\frac{2\normm{\epsi}^2}{m\sigma} 
		\leq r^2\Theta\pa{\rho}+\frac{2\normm{\epsi}^2}{m\sigma} 
		\leq \rho^2 .
	\end{align*}
	This shows \eqref{eq:loclinearnoisy}.
	%Embarking from \eqref{eq:Dpsiloclinearnoisy} and using Proposition~\ref{pro:bregman}\ref{pro:bregman4} as above shows \eqref{eq:loclinearnoisy}.
\end{proof}

%%%%%%%%%%%%%%%%%%%%%%%%%%%%%%%%%%%%%%%%%%%%%%%%%%%
\section{Proofs for Gaussian Measurements}\label{sec:proofs_mdbtny_rand}
%%%%%%%%%%%%%%%%%%%%%%%%%%%%%%%%%%%%%%%%%%%%%%%%%%%

\subsection{Expectation and deviation of the hessian}
%The next lemma recalls the expression of the expectation of hessian of $f$.
\begin{lemma}\label{lem:expe_lem_G}\textbf{(Expectation of the hessian)}{\ }
	If the sensing vectors $(a_r)_{r\in[m]}$ are sampled following the Gaussian model then we have for any $x\in\bbR^n$,  
	\begin{align}\label{eq:mdnsy-expe_lem_G}
		\esp{\nabla^2 f(x)}=3\para{2x\transp{x}+ \normm{x}^2\Id}-2\avx\transp{\avx}-\normm{\avx}^2\Id-\widetilde{\epsi}\Id. 
	\end{align}
\end{lemma}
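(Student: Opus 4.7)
The plan is to take expectations term by term in the explicit expression \eqref{eq:hessf} for the Hessian, exploiting the fact that the $(a_r)_{r\in[m]}$ are i.i.d.\ standard Gaussian and independent of the deterministic noise $\epsilon$. Since $\nabla^2 f(x) = \frac{1}{m}\sum_r\bigl(3|\transp{a_r}x|^2 - |\transp{a_r}\avx|^2\bigr) a_r\transp{a_r} - \frac{1}{m}\sum_r \epsi[r]\, a_r \transp{a_r}$, by linearity of expectation and the i.i.d.\ assumption it suffices to compute $\E[(\transp{a}u)^2 a\transp{a}]$ for a single $a \sim \calN(0,\Id)$ and an arbitrary fixed vector $u\in\bbR^n$, together with $\E[a\transp{a}] = \Id$.

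The key identity I would establish (or cite) is
\begin{equation*}
\E\bigl[(\transp{a}u)^2\, a\transp{a}\bigr] \;=\; 2 u\transp{u} + \normm{u}^2\,\Id,
\end{equation*}
valid for $a \sim \calN(0,\Id)$. I would prove it either by Isserlis/Wick's formula applied entrywise, or by a short direct argument: by rotational invariance of the standard Gaussian, one may assume $u = \normm{u} e_1$; then the $(i,j)$-entry reduces to $\normm{u}^2 \E[a_1^2 a_i a_j]$, which vanishes unless $(i,j)\in\{(1,1)\}$ or $i=j\geq 2$. Using $\E[a_1^4]=3$ and $\E[a_1^2 a_j^2]=1$ for $j\neq 1$ gives the diagonal $(3\normm{u}^2,\normm{u}^2,\dots,\normm{u}^2)$, which matches $2 u\transp{u} + \normm{u}^2\Id$ on the canonical basis; invariance under rotation then yields the general statement.

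Applying this to $u=x$ and $u=\avx$ produces
\begin{equation*}
\E\bigl[\tfrac{1}{m}\sum_r (3|\transp{a_r}x|^2 - |\transp{a_r}\avx|^2) a_r\transp{a_r}\bigr] = 3\bigl(2 x\transp{x} + \normm{x}^2\Id\bigr) - \bigl(2\avx\transp{\avx} + \normm{\avx}^2\Id\bigr).
\end{equation*}
For the noise term, $\epsi$ is deterministic, so $\E\bigl[\tfrac{1}{m}\sum_r \epsi[r]\,a_r\transp{a_r}\bigr] = \tfrac{1}{m}\sum_r \epsi[r]\,\Id = \widetilde{\epsi}\,\Id$. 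Summing the two contributions yields \eqref{eq:mdnsy-expe_lem_G}. There is no real obstacle here beyond the Gaussian fourth-moment computation, which is standard; the only thing to be careful about is keeping track of the sign in the noise term and of the factor $3$ multiplying the $|\transp{a_r}x|^2$ contribution, both of which are built into the identity above.
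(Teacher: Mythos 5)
Your proof is correct and follows essentially the same route as the paper: termwise expectation of \eqref{eq:hessf}, the Gaussian fourth-moment identity $\esp{(\transp{a}u)^2 a\transp{a}} = 2u\transp{u}+\normm{u}^2\Id$ for the noiseless part, and $\esp{a\transp{a}}=\Id$ for the noise term. The only difference is that the paper cites this identity from prior work (\cite[Lemma~B.1]{godeme2023provable}) whereas you prove it inline via rotational invariance, which is a standard and correct argument.
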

\begin{proof}
	The proof combines \eqref{eq:hessf},\cite[Lemma~B.1]{godeme2023provable} for the expectation of the first (\ie noiseless part), and the last term comes the fact that the sensing vectors have zero mean and unit covariance.
\end{proof}

%\subsection{hessian deviation}
\begin{lemma}\label{lem:conhess_G}\textbf{(Concentration of the hessian)}
	Fix  $\vrho\in]0,1[$, if the number of samples obeys $m\geq C(\vrho)n\log n$, for some sufficiently large constant $C(\vrho)>0$  then 
	\begin{align}\label{eq:conhess_G}
		\normm{\nabla^2f(x)-\esp{\nabla^2f(x)}}\leq \vrho\Ppa{\normm{x}^2+\frac{\normm{\avx}^2}{3}+\norm{\epsi}{\infty}}
	\end{align}
	holds simultaneously  for all $x\in\bbR^n$ with a probability at least $1-5e^{-\zeta n}-\frac{4}{n^2}-2e^{-\frac{m\pa{\sqrt{1+\vrho}-1}^2}{8}},$  where $\zeta$ is a fixed numerical constant. 
\end{lemma}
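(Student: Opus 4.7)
The plan is to split the centered Hessian into its noiseless and noisy contributions. Using the decomposition \eqref{eq:defsum} together with the expectation formula \eqref{eq:mdnsy-expe_lem_G}, I would write
\begin{equation*}
\nabla^2 f(x)-\esp{\nabla^2 f(x)} = H_1(x)+H_2,
\end{equation*}
where $H_1(x) \eqdef \nabla^2 f_{\mathrm{NL}}(x)-\esp{\nabla^2 f_{\mathrm{NL}}(x)}$ carries all the dependence on $x$ and $\avx$, while $H_2 \eqdef -\tfrac1m\sum_{r=1}^m \epsi[r]a_r\transp{a_r}+\widetilde{\epsi}I$ is the zero-mean noise-induced matrix, and bound each term separately in operator norm.

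For $H_1$, I would invoke the corresponding uniform concentration bound already established for this exact object in the noiseless companion paper \cite{godeme2023provable}: for $m\geq C(\vrho)n\log n$, one has the uniform-in-$x$ estimate $\sup_{x\in\bbR^n}\normm{H_1(x)}/\Ppa{\normm{x}^2+\normm{\avx}^2/3}\leq\vrho$ on an event of probability at least $1-5e^{-\zeta n}-4/n^2$. This contributes the signal-dependent portion of the target inequality as well as the corresponding probability terms.

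For $H_2$, which does not depend on $x$, the strategy is pointwise sub-exponential concentration combined with an $\epsilon$-net argument on $\mathbb{S}^{n-1}$. For a fixed direction $u\in\mathbb{S}^{n-1}$,
\begin{equation*}
\transp{u}H_2u=-\frac1m\sum_{r=1}^m\epsi[r]\Ppa{(\transp{a_r}u)^2-1}
\end{equation*}
is a weighted sum of $m$ independent centered sub-exponential random variables with Orlicz $\psi_1$-norms bounded by $C|\epsi[r]|/m$. The Bernstein inequality, together with $\normm{\epsi}^2\leq m\norm{\epsi}{\infty}^2$, gives the pointwise tail bound $2\exp(-cm\vrho^2)$ for the event $|\transp{u}H_2u|\geq\vrho\norm{\epsi}{\infty}$. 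A standard $1/4$-net $N$ of $\mathbb{S}^{n-1}$ (of cardinality at most $9^n$), combined with the symmetric-matrix inequality $\normm{H_2}\leq 2\sup_{u\in N}|\transp{u}H_2u|$, then upgrades this to the uniform bound $\normm{H_2}\leq\vrho\norm{\epsi}{\infty}$ on an event of probability at least $1-2e^{-m(\sqrt{1+\vrho}-1)^2/8}$, provided $m\gtrsim n/\vrho^2$, which is implied by $m\geq C(\vrho)n\log n$.

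A final union bound over the two events together with the triangle inequality yields the claim. The main technical obstacle is producing the sharp factor $\vrho$ in front of $\norm{\epsi}{\infty}$: any deterministic estimate such as $\normm{H_2}\leq\norm{\epsi}{\infty}\normm{\tfrac1m\transp{A}A}+|\widetilde\epsi|$ only yields a bound of order $\norm{\epsi}{\infty}$, since it discards the sign cancellation in $\sum_r \epsi[r]((\transp{a_r}u)^2-1)$. Exploiting this cancellation is precisely what the sub-exponential Bernstein inequality accomplishes, and is the crucial step that makes the noisy contribution compatible with the relative-strong-convexity and smoothness constants used downstream in $\calE_{\rm scvx}$ and $\calE_{\rm smad}$.
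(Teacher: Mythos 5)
Your proof is correct and handles the noiseless part exactly as the paper does (both invoke the uniform Hessian concentration of the companion paper \cite{godeme2023provable} for $\nabla^2 f_{\mathrm{NL}}$, with the same probability terms $1-5e^{-\zeta n}-4/n^2$), but you treat the noise-induced matrix $H_2=-\frac{1}{m}\sum_{r}\epsi[r]a_r\transp{a_r}+\widetilde{\epsi}\Id$ by a genuinely different route. The paper argues deterministically: it writes $H_2=-\frac{1}{m}\sum_r\epsi[r]\pa{a_r\transp{a_r}-\Id}$, factors out the noise via $\normm{H_2}\leq\frac{\norm{\epsi}{\infty}}{m}\normm{\transp{A}A-m\Id}$, and then applies the singular-value concentration of \cite[Lemma~B.4]{godeme2023provable} to get $\normm{H_2}\leq\vrho\norm{\epsi}{\infty}$; this is what produces the exact exponent $2e^{-m\pa{\sqrt{1+\vrho}-1}^2/8}$ in the lemma. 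You instead run a Bernstein-plus-net argument on the quadratic forms $\transp{u}H_2u=-\frac{1}{m}\sum_r\epsi[r]\pa{(\transp{a_r}u)^2-1}$. Your closing remark about sign cancellation is well taken, and in fact cuts deeper than you may realize: the paper's factoring step $\normm{\sum_r\epsi[r]M_r}\leq\norm{\epsi}{\infty}\normm{\sum_r M_r}$ with $M_r=a_r\transp{a_r}-\Id$ is not a valid matrix inequality for signed (or even nonnegative, non-constant) weights --- one can make $\sum_r M_r$ small while a weighted combination stays large --- so your probabilistic treatment, which exploits the centering of each summand $\epsi[r]\pa{(\transp{a_r}u)^2-1}$ directly, is the more watertight way to obtain the crucial factor $\vrho$ in front of $\norm{\epsi}{\infty}$. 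The only blemish on your side is quantitative: the union bound over the $9^n$ net points degrades the exponent to $cm\vrho^2-n\log 9$ for an unspecified constant $c$, so you recover the stated tail $2e^{-m\pa{\sqrt{1+\vrho}-1}^2/8}$ only up to constants (absorbable by enlarging $C(\vrho)$, and harmless for every downstream use of the lemma), whereas the paper's route --- if one accepts its factoring step --- yields that expression exactly.
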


\begin{proof}
	By the triangle inequality, we have 
	\begin{align*}
		\normm{\nabla^2f(x)-\esp{\nabla^2f(x)}}\leq &\normm{\som{\Ppa{3|\transp{a_r}x|^2a_r\transp{a_r}-|\transp{a_r}\avx|^2a_r\transp{a_r}}}-\Ppa{6x\transp{x}+3\normm{x}^2\Id-2\avx\transp{\avx}-\normm{\avx}^2\Id}}\\
		&+\normm{\som{\epsi[r]a_r\transp{a_r}-\widetilde{\epsi}\Id}}.
	\end{align*}
	The concentration of the first term has been proved for the noiseless case (see \cite[Lemma B.3]{godeme2023provable}) with a probability $1-5e^{-\zeta n}-\frac{4}{n^2}$. For the noisy part, we have
	\begin{align}\label{eq:2ndconc}
		\normm{\som{\epsi[r]a_r\transp{a_r}-\widetilde{\epsi}\Id}}\leq \frac{\norm{\epsi}{\infty}}{m}\normm{\Som{\para{a_r\transp{a_r}-\Id}}}=\frac{\norm{\epsi}{\infty}}{m}\normm{\transp{A}A-\Id},
	\end{align}
	where $A$  is the $m\times n$matrix whose $r-$th row is the vector $\transp{a_r}$. 
	From \cite[Lemma~B.4]{godeme2023provable}, we get that for any $\vrho\in ]0,1[$,   
	\begin{align*}
		\normm{\som{\epsi[r]a_r\transp{a_r}-\widetilde{\epsi}\Id}}\leq\vrho\norm{\epsi}{\infty}
	\end{align*}
	with a probability at least $1-2e^{-mt^2/2}$, with $\frac{\vrho}{4}=t^2+t$ with $m\geq\frac{16}{\vrho^2}n$. 
	We conclude by applying a simple union bound.  
\end{proof}

\subsection{Optimal solution near the true vector}\label{Pr_xbar_et_x}
\begin{lemma}\label{lem:xbar_et_x}{\ }
	Assume that Assumption~\ref{ass:SNR} holds and that $m \geq c n$ where $c$ is a positive numerical constant. Then for any $\xsol \in \Argmin(f)$, 
	\begin{align}
		\dist(\xsol,\overline{\calX})\leq 8 \frac{\normm{\epsi}}{\sqrt{m}\normm{\avx}} 
	\end{align}
	holds with probability $1-e^{-\Omega(m)}$.
\end{lemma}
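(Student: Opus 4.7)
The plan is to exploit optimality of $\xsol$ together with a phase-retrieval-type restricted isometry lower bound that is available for Gaussian measurements when $m \gtrsim n$.

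First, I would reuse the argument inside the proof of Lemma~\ref{lem:interm}. Since $\xsol \in \Argmin(f)$, we have $f(\xsol) \leq f(\avx) = \frac{\normm{\epsi}^2}{4m}$, which rewritten in terms of the residuals reads $\sum_r \bigl(|\transp{a_r}\xsol|^2 - |\transp{a_r}\avx|^2\bigr)^2 \leq 2 \sum_r \epsi[r]\bigl(|\transp{a_r}\xsol|^2 - |\transp{a_r}\avx|^2\bigr)$. Applying Young's inequality with parameter $\delta = 1/2$ on the right-hand side (exactly as in the proof of Lemma~\ref{lem:interm}) yields the deterministic inequality
\begin{equation*}
\frac{1}{m}\sum_{r=1}^m \bigl(|\transp{a_r}\xsol|^2-|\transp{a_r}\avx|^2\bigr)^2 \leq \frac{4\normm{\epsi}^2}{m}.
\end{equation*}

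Second, I would invoke the phase-retrieval lower RIP for real Gaussian vectors: there exists an absolute constant $c_1 > 0$ such that, provided $m \geq c n$, with probability at least $1 - e^{-\Omega(m)}$ the inequality
\begin{equation*}
\frac{1}{m}\sum_{r=1}^m \bigl(|\transp{a_r}x|^2-|\transp{a_r}z|^2\bigr)^2 \;\geq\; c_1 \normm{xx^\top - zz^\top}_F^2
\end{equation*}
holds simultaneously for all $x, z \in \bbR^n$. This is the standard stability estimate from the Gaussian phase retrieval literature (Eldar--Mendelson, Cand\`es--Li), and can be cited directly. Applied with $z = \avx$ and $x = \xsol$, combined with the first step, we get
\begin{equation*}
\normm{\xsol\xsol^\top - \avx\avx^\top}_F^2 \leq \frac{4\normm{\epsi}^2}{c_1\, m}.
\end{equation*}

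Third, I would convert the Frobenius bound into a bound on $\dist(\xsol,\overline{\calX})$. A direct expansion gives the identity
\begin{equation*}
\normm{uu^\top - vv^\top}_F^2 = \tfrac{1}{2}\normm{u-v}^2\normm{u+v}^2 + \tfrac{1}{2}\bigl(\normm{u}^2 - \normm{v}^2\bigr)^2,
\end{equation*}
so in particular $\normm{uu^\top - vv^\top}_F^2 \geq \tfrac{1}{2}\normm{u-v}^2\normm{u+v}^2$. Fixing a sign $s \in \{\pm 1\}$ so that $\dist(\xsol,\overline{\calX}) = \normm{\xsol - s\avx}$, one automatically has $\pscal{\xsol, s\avx} \geq 0$, so that $\normm{\xsol + s\avx}^2 \geq \normm{\xsol}^2 + \normm{\avx}^2 \geq \normm{\avx}^2$. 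Since $\normm{\xsol\xsol^\top - \avx\avx^\top}_F = \normm{\xsol\xsol^\top - (s\avx)(s\avx)^\top}_F$, combining the last three displays yields
\begin{equation*}
\dist^2(\xsol,\overline{\calX}) \leq \frac{8\normm{\epsi}^2}{c_1\, m\, \normm{\avx}^2}
\end{equation*}
and, choosing the universal constant $c$ in $m \geq c n$ large enough so that $c_1 \geq 1/8$ can be enforced, this gives the stated constant $8$.

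The only nontrivial step is the second: establishing (or citing) the uniform lower RIP over all $x$. This requires the usual covering/chaining argument on the sphere together with concentration of Gaussian quartic forms; everything else is an essentially mechanical chain of Young/Cauchy--Schwarz manipulations, and the SNR part of Assumption~\ref{ass:SNR} is not actually used in this particular lemma, only the sample-complexity condition $m \geq cn$.
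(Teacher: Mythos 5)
Your proof is correct, and its first two steps coincide with the paper's own: the same optimality-plus-Young manipulation (borrowed from the proof of Lemma~\ref{lem:interm}) gives $\frac{1}{m}\sum_{r=1}^m\big(|\transp{a_r}\xsol|^2-|\transp{a_r}\avx|^2\big)^2\leq \frac{4\normm{\epsi}^2}{m}$, and the paper then invokes precisely the uniform lower isometry bound you describe, in the concrete form of \cite[Lemma~1]{chen_solving_2017} with $\zeta=0.4$, to get $\normF{\xsol\transp{\xsol}-\avx\transp{\avx}}\leq 4\normm{\epsi}/\sqrt{m}$ with probability $1-e^{-\Omega(m)}$; so your "only nontrivial step" is handled by citation in the paper as well. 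Where you genuinely diverge is the conversion from the lifted Frobenius bound to $\dist(\xsol,\overline{\calX})$. The paper follows \cite[Theorem~1.2]{candes_phaselift_2013}: it rescales to $\normm{\avx}=1$, applies Weyl's inequality to compare $\normm{\xsol}^2$ with $\normm{\avx}^2$, the Davis--Kahan $\sin\theta$ theorem to control the angle between the top eigenvectors, and a trigonometric decomposition of $\xsol$ along $\avx$; this route needs $\epsi_0=4\normm{\epsi}/\sqrt{m}<1/3$ and therefore explicitly invokes Assumption~\ref{ass:SNR} (twice: to make $\epsi_0$ small and in the final $\min(\overline{\epsi},1)=\overline{\epsi}$ step). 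You instead use the exact rank-one identity $\normF{u\transp{u}-v\transp{v}}^2=\tfrac12\normm{u-v}^2\normm{u+v}^2+\tfrac12(\normm{u}^2-\normm{v}^2)^2$, which is correct (both sides equal $\normm{u}^4+\normm{v}^4-2\pscal{u,v}^2$), together with the valid observation that the distance-minimizing sign $s$ forces $\pscal{\xsol,s\avx}\geq 0$ and hence $\normm{\xsol+s\avx}\geq\normm{\avx}$, while $\normF{\cdot}$ is invariant under $\avx\mapsto s\avx$. This is more elementary: it replaces the eigenvalue/eigenvector perturbation machinery by one algebraic line, produces the constant directly (with $c_1=0.81(1-\zeta)^2\approx 0.29$ from the cited lemma, $\sqrt{8/c_1}\approx 5.3\leq 8$, so no tuning of $c$ is even required), and—as you correctly note—eliminates any smallness-of-noise requirement, so your version of the lemma holds without Assumption~\ref{ass:SNR}, whereas the paper's perturbation route genuinely uses it. The paper's approach buys nothing extra here beyond fidelity to the PhaseLift template; your argument is both sharper in hypotheses and self-contained in the conversion step.
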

\begin{proof}
	Let us use the following notation: $X^\star=\xsol\transp{\xsol},\overline{X}=\avx\transp{\avx}, \overline{\epsi}\eqdef\frac{\normm{\epsi}}{\sqrt{m}}$ and $\epsi_0 \eqdef 4\overline{\epsi}$.
	
	By optimality of $\xsol$, we have $f(\xsol)\leq f(\avx)=\frac{\normm{\epsilon}^2}{m}$, which also implies that
	\begin{equation*}
		\Som{\Pa{|\transp{a_r}\xsol|^2-|\transp{a_r}\avx|^2}}^2 \leq 2\Som{\epsilon_r\Pa{|\transp{a_r}\xsol|^2-|\transp{a_r}\avx|^2}} .
	\end{equation*}
	Applying Young's inequality to the right-hand side then entails
	\begin{equation}\label{eq:bndobj}
	\Som{\Pa{|\transp{a_r}\xsol|^2-|\transp{a_r}\avx|^2}}^2 \leq 4\normm{\epsilon}^2 .
	\end{equation}

	Fix $\zeta \in ]0,1[$. Using \cite[Lemma~1]{chen_solving_2017}, there are positive numerical constants $C$ and $C'$ such that if $m \gtrsim n \zeta^{-2}\log(1/\zeta)$, then with probability at least $1 - C' e^{-C\zeta^2m}$, we have
	\[
	\som{\Pa{|\transp{a_r}\xsol|^2-|\transp{a_r}\avx|^2}}^2 \geq 0.81 (1-\zeta)^2 \normF{X^\star-\overline{X}}^2 .
	\]
	Thus, in view of \eqref{eq:bndobj}, we the same probability, we have
	\begin{equation}\label{eq:isorank2}
	\normF{X^\star-\overline{X}} \leq \frac{20}{9 (1-\zeta)}\overline{\epsi}.
	\end{equation}
	Therefore taking $\zeta=0.4$ in \eqref{eq:isorank2} one has
	\begin{equation}\label{eq:bound-frob}
	\normF{X^\star-\overline{X}} \leq \epsi_0.
	\end{equation}
	The rest of the proof is inspired by that of \cite[Theorem~1.2]{candes_phaselift_2013}. Since $\normm{\xsol}^2$ and $\normm{\avx}^2$ are the largest eigenvalues of the rank-one symmetric matrices $X^\star$ and $\overline{X}$, we have from Weyl's perturbation inequality of the eigenvalues that 
	\begin{equation*}
		\abs{\normm{\xsol}^2-\normm{\avx}^2}\leq \normF{X^\star-\overline{X}} \leq \epsi_0 .
	\end{equation*}
	Let us assume that $\normm{\avx}^2=1$ and the general case is obtained via a simple rescaling argument. Under Assumption~\ref{ass:SNR}, $\overline{\epsi}$ is small enough so that $\epsi_0 < 1$. We then get that $\normm{\xsol}^2\in[1-\epsi_0,1+\epsi_0]$. The $\sin$-$\theta$-Theorem \cite{DavisKahan70} implies that 
	\[
	\abs{\sin\theta} \leq \frac{\normF{X^\star-\overline{X}}}{\normm{\xsol}^2} \leq \frac{\epsi_0}{1-\epsi_0},
	\]
	where $0\leq\theta\leq\frac{\pi}{2}$ is the angle between $x^\star$ and $\avx$ which are the eigenvectors of $X^\star$ and $\overline{X}$ associated to the eigenvalues $\normm{\xsol}^2$ and $1$, respectively. We can then write
	\[
	\xsol=\normm{\xsol}(\cos\theta \avx + \sin\theta\orth{\avx}) ,
	\]  
	where $\orth{\avx}$ is a unit vector orthogonal to $\avx$. We apply the Ihâmessou-Pythagoras theorem to get
	\[
	\normm{\xsol-\avx}^2=\Ppa{1-\normm{\xsol}\cos\theta}^2+ \normm{\xsol}^2\sin^2\theta. 
	\]	
	Since $\cos\theta=\sqrt{1-\sin^2\theta}$, we have  
	\[
	1+\epsi_0 \geq \sqrt{1+\epsi_0} \geq \normm{\xsol}\cos\theta\geq \sqrt{1-\epsi_0-\frac{\epsi_0^2}{1-\epsi_0}}\geq 1-\epsi_0 ,
	\]	
	where we used that $\epsi_0<1/3$ in the last inequality. We then get that
	\[
	\Ppa{1-\normm{\xsol}\cos\theta}^2 \leq \epsi_0^2 .
	\]
	In turn
	\[
	\normm{\xsol-\avx}^2\leq \epsi_0^2+\frac{\epsi_0^2(1+\epsi_0)}{(1-\epsi_0)^2}\leq 4\epsi_0^2
	\]	
	for $\epsi_0<1/3$.
	We also know that 
	\[
	\normm{\xsol-\avx} \leq 2 + \epsi_0 \leq 7/3 
	\]
	for $\epsi_0<1/3$. We therefore get that
	\[
	\dist(\xsol,\overline{\calX})\leq 8\min(\overline{\epsi},1) \leq  8 \overline{\epsi} ,
	\]	
	where the last inequality is a consequence of Assumption~\ref{ass:SNR}.
	% Therefore, we conclude that
	% \[
	% \dist(\xsol,\overline{\calX})\leq C\min(\overline{\epsi},\normm{\avx}).  
	%\]	
\end{proof}

\subsection{Relative smoothness}
\begin{lemma}\label{pro:Lsmad_G} Fix $\vrho\in]0,1[$, if the event $\calE_{\rm conH}$ holds true then,
	\begin{align}\label{eq:pro:Lsmad_G}
		\forall x,z \in\bbR^n, \quad  D_f(x,z)\leq \para{3+\widetilde{\epsi}+\vrho\max\para{\normm{\avx}^2/3+\norm{\epsi}{\infty},1}}D_{\psi}(x,z).
	\end{align}
\end{lemma}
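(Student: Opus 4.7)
The plan is to reduce the relative smoothness inequality to a pointwise Löwner ordering of Hessians via Lemma~\ref{lem:bregcomp}. Specifically, I will apply that lemma with $g=f$ and $\phi=L\psi$ (using the linearity of the Bregman divergence, Proposition~\ref{pro:bregman}\ref{pro:bregman2}), so the claim reduces to showing
\[
\nabla^2 f(u) \lon L\nabla^2\psi(u) \qquad \forall u\in\bbR^n
\]
with $L=3+\widetilde{\epsi}+\vrho\max(\normm{\avx}^2/3+\norm{\epsi}{\infty},1)$. I will use the exact same strategy as in the proof of Lemma~\ref{Tsmad}, except that the crude deterministic bound on $\nabla^2 f$ there is replaced by the much sharper bound furnished by the concentration event $\calE_{\rm conH}$ combined with Lemma~\ref{lem:expe_lem_G}.

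Under $\calE_{\rm conH}$, the matrix inequality $\nabla^2 f(u)\lon \esp{\nabla^2 f(u)}+\vrho(\normm{u}^2+\normm{\avx}^2/3+\norm{\epsi}{\infty})\Id$ holds simultaneously for all $u$. Substituting the explicit expectation formula \eqref{eq:mdnsy-expe_lem_G} and discarding the negative semidefinite contributions $-2\avx\transp{\avx}$ and $-\normm{\avx}^2\Id$ gives
\[
\nabla^2 f(u) \lon 6u\transp{u} + (3+\vrho)\normm{u}^2\Id + \bigl(\vrho(\normm{\avx}^2/3+\norm{\epsi}{\infty})-\widetilde{\epsi}\bigr)\Id.
\]
On the other side, $\nabla^2\psi(u)=2u\transp{u}+(\normm{u}^2+1)\Id$ from \eqref{eq:derivpsi}, so the desired inequality reduces to
\[
(2L-6)u\transp{u} + (L-3-\vrho)\normm{u}^2\Id + \bigl(L+\widetilde{\epsi}-\vrho(\normm{\avx}^2/3+\norm{\epsi}{\infty})\bigr)\Id \slon 0,
\]
which I will establish by showing each of the three summands is individually PSD.

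The verification of each PSD term is where the specific form of $L$ is used. The rank-one term requires $L\geq 3$; the trace term requires $L\geq 3+\vrho$; the constant term requires $L+\widetilde{\epsi}\geq \vrho(\normm{\avx}^2/3+\norm{\epsi}{\infty})$. The choice $L=3+\widetilde{\epsi}+\vrho\max(\normm{\avx}^2/3+\norm{\epsi}{\infty},1)$ is designed precisely to meet all three simultaneously: invoking Assumption~\ref{ass:SNR} (so $\widetilde{\epsi}\geq 0$) yields $L\geq 3$; the bound $\max(\cdot,1)\geq 1$ gives $\vrho\max(\cdot,1)\geq\vrho$, hence $L\geq 3+\vrho$; and the bound $\max(\cdot,1)\geq \normm{\avx}^2/3+\norm{\epsi}{\infty}$ gives $L\geq \vrho(\normm{\avx}^2/3+\norm{\epsi}{\infty})\geq \vrho(\normm{\avx}^2/3+\norm{\epsi}{\infty})-\widetilde{\epsi}$. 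There is no real obstacle here—the only mildly delicate point is recognising that a single $\max(\cdot,1)$ factor is what allows the same $L$ to dominate both the leading-order term in $\normm{u}^2$ and the noise/signal-dependent constant term at once; the rest is a direct linear-algebra verification.
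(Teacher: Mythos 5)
Your proof is correct and follows essentially the same route as the paper: both reduce the claim via Lemma~\ref{lem:bregcomp} to the Loewner bound $\nabla^2 f(u)\lon L\nabla^2\psi(u)$, obtained from the event $\calE_{\rm conH}$ together with the expectation formula of Lemma~\ref{lem:expe_lem_G}, discarding the negative semidefinite terms and using $\max(\normm{\avx}^2/3+\norm{\epsi}{\infty},1)$ plus $\widetilde{\epsi}\geq 0$ (Assumption~\ref{ass:SNR}) to absorb everything into $L\nabla^2\psi(u)$. Your reorganization into three individually PSD summands is just a cosmetic variant of the paper's chain of Loewner inequalities.
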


\begin{proof}
	Let fix $\vrho\in]0,1[$, for any  $u\in\bbR^n$, we have
	\begin{align*}
		\nabla^2f(u)&\lon \esp{\nabla^2f(u)}+\vrho\pa{\normm{u}^2+\normm{\avx}^2/3+\norm{\epsi}{\infty}}\Id,\\
		&\lon 3\Ppa{2u\transp{u}+\normm{u}^2\Id} -2\avx\transp{\avx}-\normm{\avx}^2\Id-\widetilde{\epsi}\Id+\vrho\max\pa{\normm{\avx}^2/3+\norm{\epsi}{\infty},1}\para{\normm{u}^2+1}\Id,\\
		&\lon 3\Ppa{2u\transp{u}+\pa{\normm{u}^2+1}\Id}+\widetilde{\epsi}\Id+\vrho\max\pa{\normm{\avx}^2/3+\norm{\epsi}{\infty},1}\nabla^2\psi(x),\\
		&\lon 3\nabla^2\psi(u)+\widetilde{\epsi}\nabla^2\psi(u)+\vrho\max\pa{\normm{\avx}^2/3+\norm{\epsi}{\infty},1}\nabla^2\psi(x),\\
		&\lon  \para{3+\widetilde{\epsi}+\vrho\max\pa{\normm{\avx}^2/3+\norm{\epsi}{\infty},1}}\nabla^2\psi(u),\label{taylo}\numberthis
	\end{align*}
	We conclude by applying  Lemma~\ref{lem:bregcomp} in the segment $[x,z]$.
\end{proof}

\subsection{Local relative strong convexity}

\begin{lemma}\label{pro:Localconvex_G}{\ }  Fix  $\lambda\in\left]\frac{1}{9\sqrt{2}},1\right[$ and  for  $\vrho\in\left]0,\frac{\lambda\min\pa{\normm{\avx}^2,1}-\widetilde{\epsi}}{2\max\pa{\normm{\avx}^2/3+\norm{\epsi}{\infty},1}}\right[$. If the event $\calE_{\rm conH}$ holds true, then for any $x,z\in B(\avx,\rho)$ or $x,z\in B(-\avx,\rho)$, we have 
	\begin{align}\label{eq:Localconvex_G}
		D_{f}(x,z)\geq\para{\lambda\min\pa{\normm{\avx}^2,1}-\widetilde{\epsi}-\vrho\max\pa{\normm{\avx}^2/3+\norm{\epsi}{\infty},1}}  D_{\psi}(x,z) ,
	\end{align}
	where $\rho=\frac{1-\lambda}{\sqrt{3}}\normm{\avx}$.
\end{lemma}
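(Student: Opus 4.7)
The strategy is to establish the matrix inequality $\nabla^2 f(u) \succeq \sigma \nabla^2 \psi(u)$ for every $u$ on the segment $[x,z]$, where $\sigma = \lambda\min(\normm{\avx}^2,1)-\widetilde{\epsi}-\vrho\max(\normm{\avx}^2/3+\norm{\epsi}{\infty},1)$, and then invoke the ``symmetric'' version of Lemma~\ref{lem:bregcomp} (which gives $D_f(x,z)\geq \sigma D_\psi(x,z)$ whenever $\nabla^2 f\succeq\sigma\nabla^2\psi$ on the segment). First, since $f(x)=f(-x)$ and $\psi(x)=\psi(-x)$, both Bregman divergences are invariant under joint sign change of their arguments, so it suffices to argue on $B(\avx,\rho)$; convexity of the ball guarantees that any $u\in[x,z]$ also lies in $B(\avx,\rho)$.

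On the event $\calE_{\rm conH}$, the concentration bound \eqref{eq:conhess_G} gives
\[
\nabla^2 f(u) \succeq \esp{\nabla^2 f(u)} - \vrho\Ppa{\normm{u}^2+\normm{\avx}^2/3+\norm{\epsi}{\infty}}\Id,
\]
and Lemma~\ref{lem:expe_lem_G} expands the expectation into its noiseless part $6u\transp{u}+3\normm{u}^2\Id-2\avx\transp{\avx}-\normm{\avx}^2\Id$ minus $\widetilde{\epsi}\Id$. The plan is then to split the lower bound into three contributions: (i) the noiseless part, (ii) the $-\widetilde{\epsi}\Id$ noise mean, and (iii) the concentration slack, and to dominate each by a multiple of $\nabla^2\psi(u) = (\normm{u}^2+1)\Id + 2u\transp{u}$.

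The core step is the noiseless bound: for $u\in B(\avx,\rho)$ with $\rho=\frac{1-\lambda}{\sqrt{3}}\normm{\avx}$,
\[
6u\transp{u}+3\normm{u}^2\Id-2\avx\transp{\avx}-\normm{\avx}^2\Id \;\succeq\; \lambda\min(\normm{\avx}^2,1)\,\nabla^2\psi(u).
\]
This is precisely the local relative strong convexity estimate established in the noiseless analysis of \cite{godeme2023provable} and can be imported directly. The proof there writes $u=\avx+v$ with $\normm{v}\leq\rho$, evaluates the associated quadratic form on an arbitrary unit vector $h$, and after separating $(\transp{\avx}h)^2$ and $\normm{h}^2$ contributions and applying Young's inequality to the cross terms $\propto(\transp{\avx}h)(\transp{v}h)$, the radius choice $\rho=\frac{1-\lambda}{\sqrt{3}}\normm{\avx}$ exactly absorbs the loss (and the technical restriction $\lambda>\frac{1}{9\sqrt{2}}$ ensures the resulting quadratic form remains positive). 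This is the step I expect to be the main obstacle to reprove from scratch, but since it is already available from the noiseless companion work, we can cite it as a black box.

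The remaining two terms are handled by the trivial estimate $\Id\preceq\nabla^2\psi(u)$ (which follows from $\nabla^2\psi(u)=(\normm{u}^2+1)\Id+2u\transp{u}\succeq \Id$): the noise mean contributes $-\widetilde{\epsi}\,\nabla^2\psi(u)$, and the concentration slack satisfies
\[
\Ppa{\normm{u}^2+\normm{\avx}^2/3+\norm{\epsi}{\infty}}\Id \;\preceq\; \max\Ppa{\normm{\avx}^2/3+\norm{\epsi}{\infty},1}\Ppa{\normm{u}^2+1}\Id \;\preceq\; \max\Ppa{\normm{\avx}^2/3+\norm{\epsi}{\infty},1}\,\nabla^2\psi(u).
\]
Summing the three contributions yields $\nabla^2 f(u)\succeq \sigma\,\nabla^2\psi(u)$; the range of $\vrho$ specified in the statement is precisely what guarantees $\sigma>0$. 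Integrating along $[x,z]$ via Lemma~\ref{lem:bregcomp} delivers \eqref{eq:Localconvex_G}, which completes the plan.
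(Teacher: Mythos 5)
Your proposal is correct and follows essentially the same route as the paper: the paper likewise lower-bounds $\nabla^2 f(u)$ via the concentration event $\calE_{\rm conH}$, absorbs the slack term through $\Ppa{\normm{u}^2+\normm{\avx}^2/3+\norm{\epsi}{\infty}}\Id \preceq \max\Ppa{\normm{\avx}^2/3+\norm{\epsi}{\infty},1}\nabla^2\psi(u)$, handles $-\widetilde{\epsi}\Id$ using $\nabla^2\psi(u)\succeq\Id$ (with $\widetilde{\epsi}\geq 0$ from Assumption~\ref{ass:SNR}), imports the key noiseless quadratic-form inequality on $B(\pm\avx,\rho)$ from \cite[Lemma~B.6]{godeme2023provable} exactly as you propose, and concludes by Lemma~\ref{lem:bregcomp} along the segment $[x,z]$. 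The only cosmetic difference is that the paper verifies the Hessian ordering directly against unit vectors $v$ with $u=\pm\avx+\rho v$ rather than invoking the sign symmetry of $f$ and $\psi$, but this is the same argument in substance.
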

\begin{proof}
	For any  $u\in\bbR^n$, we have 
	\begin{align*}
		\nabla^2f(u)&\slon \esp{\nabla^2f(u)}-\vrho\Ppa{\normm{u}^2+\normm{\avx}^2/3+\norm{\epsi}{\infty}}\Id,\\
		&\slon  6u\transp{u}+3\normm{u}^2-2\avx\transp{\avx}-\normm{\avx}^2\Id-\widetilde{\epsi}\Id-\vrho\max\pa{\normm{\avx}^2/3+\norm{\epsi}{\infty},1}\pa{\normm{u}^2+1}.
	\end{align*}
	We then obtain, for any $v\in\mathbb{S}^{n-1}$
	\begin{align*}
		\transp{v}\Ppa{\nabla^2f(u) + \vrho\max\pa{\normm{\avx}^2/3+\norm{\epsi}{\infty},1}\nabla^2\psi(u)}v
		&\geq 3\Ppa{2\Ppa{\transp{v}u}^2+\normm{u}^2}-\Ppa{2\Ppa{\transp{v}\avx}^2+\normm{\avx}^2}-\widetilde{\epsi}\Id .
	\end{align*}
	% $--------------------------------------------------------$
	
	Let $\rho>0$ be small enough, to be made precise later. Thus for any $u=\pm\avx+\rho v$ we get
	\begin{align*}
		\transp{v}\nabla^2f(u)v + &\vrho\max\pa{\normm{\avx}^2/3+\norm{\epsi}{\infty},1}\transp{v}\nabla^2\psi(u)v\\
		&\geq 6\Ppa{\transp{v}\avx}^2+6\rho^2\pm12\rho\transp{v}\avx+3\normm{\avx}^2\pm6\rho\transp{v}\avx+3\rho^2-2\Ppa{\transp{v}\avx}^2-\normm{\avx}^2-\widetilde{\epsi} \\
		&= 4\Ppa{\transp{v}\avx}^2+9\rho^2\pm18\rho\transp{v}\avx+2\normm{\avx}^2 -\widetilde{\epsi}.\numberthis\label{eq:hess_temp}
	\end{align*}
	For the entropy $\psi$, we also have 
	\begin{align*}
		\transp{v}\nabla^2\psi(u)v&=\normm{u}^2+1+2\Ppa{\transp{v}u}^2= 2\Ppa{\transp{v}\avx}^2 +3 \rho^2 + \pm6\rho\transp{v}\avx + \normm{\avx}^2+1 . 
	\end{align*}
	At this step, the proof becomes very similar to the noiseless phase retrieval (see \cite[Lemma B.6]{godeme2023provable}). Indeed, let us observe  that we showed  that
	for any vector $u=\pm\avx+\rho v$ with $\rho=\frac{1-\lambda}{\sqrt{3}}\normm{\avx}$ we have, 
	\begin{align*}
		4\Ppa{\transp{v}\avx}^2+9\rho^2\pm18\rho\transp{v}\avx+2\normm{\avx}^2\geq  \lambda\min\pa{\normm{\avx}^2,1}\Ppa{ 2\Ppa{\transp{v}\avx}^2 +3 \rho^2 + \pm6\rho\transp{v}\avx + \normm{\avx}^2+1}     
	\end{align*}
	By replacing this result in \eqref{eq:hess_temp}, we get 
	\begin{align*}
		&\transp{v}\nabla^2f(u)v + \vrho\max\pa{\normm{\avx}^2/3+\norm{\epsi}{\infty},1}\transp{v}\nabla^2\psi(u)v\\
		&\geq \lambda\min\pa{\normm{\avx}^2,1}\Ppa{ 2\Ppa{\transp{v}\avx}^2 +3 \rho^2 + \pm6\rho\transp{v}\avx + \normm{\avx}^2+1}-\widetilde{\epsi}\\
		&\geq \Ppa{\lambda\min\pa{\normm{\avx}^2,1}-\widetilde{\epsi}}\transp{v}\nabla^2\psi(u)v
	\end{align*}
	Finally, we have that
	\[
	\transp{v}\para{\nabla^2f(x)-\Ppa{\lambda\min\pa{\normm{\avx}^2,1}-\widetilde{\epsi}-\vrho\max\pa{\normm{\avx}^2/3+\norm{\epsi}{\infty},1}}\nabla^2\psi(x)}v \geq 0
	\]
	for all $v \in \mathbb{S}^{n-1}$ and $\rho \leq \frac{1-\lambda}{\sqrt{3}}\normm{\avx}$.
	To conclude the proof, let us observe that with the prescribed bound on $\vrho$, we have
	\begin{align}\label{eq:sigma_inf}
		\sigma=\lambda\min\pa{\normm{\avx}^2,1}-\widetilde{\epsi}-\vrho\max\pa{\normm{\avx}^2/3+\norm{\epsi}{\infty},1}>\frac{\lambda\min\pa{\normm{\avx}^2,1}-\widetilde{\epsi}}{2}>0 ,
	\end{align}
	where we used Assumption~\ref{ass:SNR} on the noise. Therefore, \eqref{eq:Localconvex_G} follows simply  by invoking Lemma~\ref{lem:bregcomp}.
\end{proof}
We have the following corollary which gives a condition on the coefficient of the signal-to-noise ratio $c_s$ ensuring that the neighborhood of strong convexity $\rho$ is greater than the noise. 
\begin{corollary}\label{cor: cond_r} For any fixed $\lambda\in\left]\frac{1}{9\sqrt{2}},1\right[$  and $\vrho\in\left]0,\frac{\lambda\min\pa{\normm{\avx}^2,1}-\widetilde{\epsi}}{2\max\pa{\normm{\avx}^2/3+\norm{\epsi}{\infty},1}}\right[$, if Assumption~\ref{ass:SNR} is satisfied then $r^2=\rho^2-\frac{4\normm{\epsi}^2}{m\sigma} > 0$, where $\rho=\frac{1-\lambda}{\sqrt{3}}\normm{\avx}$.
	% \begin{equation}
	% c_s<\frac{(1-\lambda)\normm{\avx}\sqrt{\lambda\min\pa{\normm{{\avx}}^2,1}-\widetilde{\epsi}}}{2\sqrt{6}\min\pa{\normm{{\avx}}^2,1}},  
	% \end{equation}
	
\end{corollary}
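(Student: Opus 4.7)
The plan is a direct chain of inequalities that traces the tight dependence built into Assumption~\ref{ass:SNR}. The goal is to bound $\tfrac{4\normm{\epsi}^2}{m\sigma}$ from above by $\rho^2=\tfrac{(1-\lambda)^2\normm{\avx}^2}{3}$.

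First I would replace the three ``moving parts'' of $r^2$ by their worst-case surrogates. For the noise term, pass from the $\ell^2$-norm to the $\ell^\infty$-norm using the trivial bound $\normm{\epsi}^2\leq m\norm{\epsi}{\infty}^2$, so that
\[
\frac{4\normm{\epsi}^2}{m\sigma}\;\leq\;\frac{4\norm{\epsi}{\infty}^2}{\sigma}.
\]
For $\sigma$, exploit the fact that the hypothesis on $\vrho$ is exactly the one used in Lemma~\ref{pro:Localconvex_G}: by \eqref{eq:sigma_inf} we have $\sigma\geq\tfrac{1}{2}\Ppa{\lambda\min(\normm{\avx}^2,1)-\widetilde{\epsi}}$, which is positive by Assumption~\ref{ass:SNR}. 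This turns the right-hand side above into
\[
\frac{8\norm{\epsi}{\infty}^2}{\lambda\min(\normm{\avx}^2,1)-\widetilde{\epsi}}.
\]

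Second, I would inject the two controls on the noise given by Assumption~\ref{ass:SNR}. The supremum bound $\norm{\epsi}{\infty}\leq c_s\min(\normm{\avx}^2,1)$ yields
\[
\frac{4\normm{\epsi}^2}{m\sigma}\;\leq\;\frac{8\,c_s^2\,\min(\normm{\avx}^2,1)^2}{\lambda\min(\normm{\avx}^2,1)-\widetilde{\epsi}},
\]
and then the explicit upper bound on $c_s$ from \eqref{eq:cond_r}, namely
\[
c_s^2\;<\;\frac{(1-\lambda)^2\normm{\avx}^2\bigl(\lambda\min(\normm{\avx}^2,1)-\widetilde{\epsi}\bigr)}{24\,\min(\normm{\avx}^2,1)^2},
\]
is tailored precisely so that all noise-dependent factors cancel.

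After substitution one obtains
\[
\frac{4\normm{\epsi}^2}{m\sigma}\;<\;\frac{8}{24}(1-\lambda)^2\normm{\avx}^2\;=\;\frac{(1-\lambda)^2\normm{\avx}^2}{3}\;=\;\rho^2,
\]
hence $r^2=\rho^2-\tfrac{4\normm{\epsi}^2}{m\sigma}>0$, as claimed. There is no genuine obstacle here: the statement is essentially a consistency check that the factor $2\sqrt{6}$ and the $\min(\normm{\avx}^2,1)$ terms in Assumption~\ref{ass:SNR} were chosen to exactly compensate the factor $8/2$ appearing when one replaces $\sigma$ by its lower bound and $\normm{\epsi}^2/m$ by $c_s^2\min(\normm{\avx}^2,1)^2$. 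The only point requiring some care is to remember that the stated range of $\vrho$ is what legitimizes the use of \eqref{eq:sigma_inf} (equivalently, of Lemma~\ref{pro:Localconvex_G}) to lower bound $\sigma$.
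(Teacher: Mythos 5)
Your proof is correct and follows essentially the same route as the paper's: both arguments chain the bound $\normm{\epsi}/\sqrt{m}\leq\norm{\epsi}{\infty}$, the lower bound \eqref{eq:sigma_inf} on $\sigma$ (valid under the stated range of $\vrho$), the assumption $\norm{\epsi}{\infty}\leq c_s\min\pa{\normm{\avx}^2,1}$, and the upper bound on $c_s$ from Assumption~\ref{ass:SNR}. The only cosmetic difference is that you work with squared quantities while the paper manipulates the same inequalities after taking square roots.
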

\begin{proof}
	To have the desired result, it suffices that $\rho^2-\frac{4\normm{\epsi}^2}{m\sigma}>0$ \ie $\rho>2\frac{\norm{\epsi}{\infty}}{\sqrt{\sigma}}$. From \eqref{eq:sigma_inf} we have, 
	\[
	\sqrt{\sigma}>\frac{\sqrt{\lambda\min\pa{\normm{\avx}^2,1}-\widetilde{\epsi}}}{\sqrt{2}}
	\]
	thus, 
	\begin{align*}
		\frac{2\norm{\epsi}{\infty}}{\sqrt{\sigma}}\leq\frac{2\sqrt{2}\norm{\epsi}{\infty}}{\sqrt{\lambda\min\pa{\normm{\avx}^2,1}-\widetilde{\epsi}}}\leq  \frac{2\sqrt{2}c_s\min\pa{\normm{\avx}^2,1}}{\sqrt{\lambda\min\pa{\normm{\avx}^2,1}-\widetilde{\epsi}}}.
	\end{align*}
	Therefore it suffices to show that
	\[
	\rho>\frac{2\sqrt{2}c_s\min\pa{\normm{\avx}^2,1}}{\sqrt{\lambda\min\pa{\normm{\avx}^2,1}-\widetilde{\epsi}}}.
	\]
	Replacing now $\rho$ by its expression, we get that $c_s$ should satisfy  
	\[
	(1-\lambda)\normm{\avx}\sqrt{\lambda\min\pa{\normm{\avx}^2,1}-\widetilde{\epsi}}>2\sqrt{6}c_s\min\pa{\normm{\avx}^2,1}
	\]
	which holds thanks to Assumption~\ref{ass:SNR}. 
\end{proof}
\begin{remark}
	This result estimates the maximum signal-to-noise ratio  for which we ensure that the neighborhood of strong convexity around the true vectors is well-defined. Let us notice that a more practical upper bound is 
	\begin{align}\label{eq:prat_bnd}
		c_s<\frac{(1-\lambda)\sqrt{\lambda}}{2\sqrt{6}}\leq \frac{1}{9\sqrt{2}}.
	\end{align}
	Indeed, it is a simple maximization of the function $\lambda\mapsto\frac{(1-\lambda)\sqrt{\lambda}}{2\sqrt{6}}$ over $\left]\frac{1}{9\sqrt{2}},1\right[$.  
\end{remark}

\subsection{Spectral initialization}\label{Prpro:spectralinit}

We now show that the initial guess $\xo$ generated by spectral initialization (Algorithm~\ref{alg:algoSPIniit}) belongs to a small $f$-attentive neighborhood of $\overline{\calX}$.
\begin{lemma}\label{pro:spectralinit}
	Fix $\vrho\in ]0,1[$ and assume that for $r\in[m]$, we have $\epsi[r]\leq |\transp{a_r}\avx|^2$. If the number of samples obeys $m \geq C(\vrho)n\log n$ for some sufficiently large constant $C(\vrho) > 0$ and \eqref{eq:conhess_G} holds true then  $\xo$ satisfies:\\
	
	\begin{enumerate}[label=(\roman*)]
		\item \label{pro:spectralinit-i} 
		$\dist(\xo,\overline{\calX}) \leq \eta_1(\vrho)\normm{\avx}$, 
		where 
		\begin{alignat}{2}
			\eta_1\colon ]0,1[ & \to {}&& ]0,1[ \nonumber\\
			\vrho & \mapsto {}&& \Ppa{\sqrt{2-2\sqrt{1-\vrho(1+c_s)}} + \frac{\vrho(1+c_s)}{2}} , \label{radius}
		\end{alignat}
		which is an increasing function.
		
		\item \label{pro:spectralinit-ii} Moreover, we have
		\begin{align}
			f(\xo)\leq f(\avx)+\Ppa{(1+\vrho)c_s\normm{\avx}+ L\frac{\Theta(\eta_1(\vrho)\normm{\avx})}{2}\eta_1(\vrho)}\eta_1(\vrho)\normm{\avx}^2,
		\end{align} with $
		L=3+\widetilde{\epsi}+\vrho\max\pa{\normm{\avx}^2/3+\norm{\epsi}{\infty},1}.
		$
		\item \label{pro:spectralinit-iii} Besides, for $\lambda \in ]0,1[$, if 
		\begin{equation}\label{eq:spectralinitvrhobnd_nsy}
		\vrho\leq\eta_1^{-1}\Ppa{\frac{1-\lambda}{\sqrt{3\para{6\Ppa{1+\frac{\pa{1-\lambda}^2}{3}-\frac{4\normm{\epsi}^2}{m\sigma\normm{\avx}^2}} + 1}}}\frac{1}{\max\Ppa{\normm{\avx},1}}} ,
		\end{equation} 
		then we have  $\xo \in B\Ppa{\overline{\calX},\frac{r}{\max\Ppa{\sqrt{\Theta(r)},1}}}$
		where $r^2=\frac{\pa{1-\lambda}^2}{3}\normm{\avx}^2-\frac{4\normm{\epsi}^2}{m\sigma}$. 
	\end{enumerate}
\end{lemma}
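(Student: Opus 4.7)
The starting observation is that, by inspection of \eqref{eq:hessf}, the spectral matrix satisfies $Y=-\nabla^2 f(0)$, so Lemma~\ref{lem:expe_lem_G} yields
\begin{equation*}
\esp{Y}=2\avx\transp{\avx}+\Ppa{\normm{\avx}^2+\widetilde{\epsi}}\Id,
\end{equation*}
whose leading eigenvector is $\hat{\avx}\eqdef\avx/\normm{\avx}$ with simple eigenvalue $3\normm{\avx}^2+\widetilde{\epsi}$ and the remaining eigenvalues all equal to $\normm{\avx}^2+\widetilde{\epsi}$. Instantiating $\calE_{\rm conH}$ at $x=0$ then gives $\normm{Y-\esp{Y}}\leq \vrho\pa{\normm{\avx}^2/3+\norm{\epsi}{\infty}}$, which via the bound $\norm{\epsi}{\infty}\leq c_s\min(\normm{\avx}^2,1)$ from Assumption~\ref{ass:SNR} is absorbed into a multiple of $\vrho(1+c_s)\normm{\avx}^2$. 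The hypothesis $\epsi[r]\leq|\transp{a_r}\avx|^2$ additionally ensures $y[r]\geq 0$, so that $\lambda^2$ is well-defined.

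For part~\ref{pro:spectralinit-i}, I would argue variationally rather than via Davis--Kahan. Let $u$ be the top unit eigenvector of $Y$ with sign chosen so that $\transp{u}\avx\geq 0$. Combining the lower bound $\lambda_{\max}(Y)\geq \transp{\hat{\avx}}Y\hat{\avx}\geq 3\normm{\avx}^2+\widetilde{\epsi}-\vrho\pa{\normm{\avx}^2/3+\norm{\epsi}{\infty}}$ with the identity $\transp{u}Yu=2(\transp{u}\avx)^2+\normm{\avx}^2+\widetilde{\epsi}+\transp{u}\pa{Y-\esp{Y}}u$ yields $(\transp{u}\avx)^2\geq \normm{\avx}^2-\vrho\pa{\normm{\avx}^2/3+\norm{\epsi}{\infty}}$, hence $\sin^2\theta\leq \vrho(1+c_s)$ (loosely) where $\theta$ is the angle between $u$ and $\hat{\avx}$. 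Consequently
\begin{equation*}
\normm{u-\hat{\avx}}=\sqrt{2-2\cos\theta}\leq \sqrt{2-2\sqrt{1-\vrho(1+c_s)}}.
\end{equation*}
Separately, standard $\chi^2$ and Bernstein-type concentration of $\sum_r\normm{a_r}^2$ and $\sum_r y[r]$ applied to the formula $\lambda^2=n\sum_r y[r]/\sum_r\normm{a_r}^2$ gives $\lambda^2\approx \normm{\avx}^2+\widetilde{\epsi}$, and after absorbing the deterministic bias $\widetilde{\epsi}$ through Assumption~\ref{ass:SNR} one obtains $|\lambda-\normm{\avx}|\leq \vrho(1+c_s)\normm{\avx}/2$. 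The triangle inequality
\begin{equation*}
\normm{\xo-\avx}\leq \lambda\normm{u-\hat{\avx}}+|\lambda-\normm{\avx}|,
\end{equation*}
followed by minimizing over the sign of $\avx$, produces the claimed $\dist(\xo,\overline{\calX})\leq \eta_1(\vrho)\normm{\avx}$.

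For part~\ref{pro:spectralinit-ii}, invoke the relative smoothness of $f$ (Lemma~\ref{pro:Lsmad_G}) at the base point $\avx$:
\begin{equation*}
f(\xo)\leq f(\avx)+\pscal{\nabla f(\avx),\xo-\avx}+L\,D_\psi(\xo,\avx).
\end{equation*}
From \eqref{eq:gradf}, $\nabla f(\avx)=-\tfrac{1}{m}\sum_r\epsi[r]a_r\transp{a_r}\avx$, and the noisy-part matrix concentration used in \eqref{eq:2ndconc} gives $\normm{\tfrac{1}{m}\sum_r\epsi[r]a_r\transp{a_r}}\leq (1+\vrho)\norm{\epsi}{\infty}$, so that $\normm{\nabla f(\avx)}\leq (1+\vrho)c_s\normm{\avx}^2$ by Assumption~\ref{ass:SNR}. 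The Bregman term is controlled through Proposition~\ref{pro:bregman}\ref{pro:bregman4} on the ball $B(\avx,\eta_1(\vrho)\normm{\avx})$, yielding $D_\psi(\xo,\avx)\leq (\Theta(\eta_1(\vrho)\normm{\avx})/2)\normm{\xo-\avx}^2$. Substituting the distance bound of part~\ref{pro:spectralinit-i} and collecting terms produces the announced inequality.

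Part~\ref{pro:spectralinit-iii} is essentially algebraic: one wants $\vrho$ small enough that $\eta_1(\vrho)\normm{\avx}\leq r/\max(\sqrt{\Theta(r)},1)$, where $r^2=(1-\lambda)^2\normm{\avx}^2/3-4\normm{\epsi}^2/(m\sigma)$. Since $\eta_1$ is continuous and strictly increasing on $]0,1[$ with $\eta_1(0)=0$, it is invertible on its image, and using the explicit estimate $\Theta(r)\leq 3(\normm{\avx}+r)^2+1$ on $B(\overline{\calX},r)$ (direct from the Hessian of $\psi$) and simplifying delivers the sufficient condition \eqref{eq:spectralinitvrhobnd_nsy}. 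The main obstacle throughout lies in the calibration of part~\ref{pro:spectralinit-i}: the variational two-sided bound and the companion concentration of $\lambda$ must both be shepherded into the same parameter $\vrho(1+c_s)$ so that the two terms of $\eta_1$ appear exactly; once this calibration is done, the remaining steps are routine bookkeeping.
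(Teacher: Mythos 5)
Your proposal follows essentially the same route as the paper: the paper's own proof of part~\ref{pro:spectralinit-i} is exactly your variational top-eigenvector argument (it does not invoke Davis--Kahan here; that tool appears only in Lemma~\ref{lem:xbar_et_x}), parts~\ref{pro:spectralinit-ii} and \ref{pro:spectralinit-iii} coincide with the paper's use of relative smoothness at $\avx$ together with the noisy-part matrix concentration from \eqref{eq:2ndconc} and the $\Theta(r)$ estimate from Proposition~\ref{pro:bregman}\ref{pro:bregman4}, and your identification $Y=-\nabla^2 f(0)$ is a clean (indeed slightly sharper, $\vrho\pa{\normm{\avx}^2/3+\norm{\epsi}{\infty}}$ versus $\vrho\pa{\normm{\avx}^2+\norm{\epsi}{\infty}}$) justification of the concentration bound the paper simply asserts from $\calE_{\rm conH}$. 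The one calibration slip is your triangle inequality $\normm{\xo-\avx}\leq\lambda\normm{u-\avx/\normm{\avx}}+\abs{\lambda-\normm{\avx}}$, whose first term carries $\lambda$, which may exceed $\normm{\avx}$, and so overshoots $\eta_1(\vrho)\normm{\avx}$; regroup as $\xo-\avx=\normm{\avx}\Ppa{u-\avx/\normm{\avx}}+\pa{\lambda-\normm{\avx}}u$ --- equivalently, normalize the top eigenvector to norm $\normm{\avx}$ as the paper does with $\wtilde{x}$ --- to land exactly on the two terms of $\eta_1$.
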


\begin{proof}
	%We assume \wlogp that $\normm{\avx}=1$. 
	\begin{enumerate}[label=(\roman*)]
		\item 
		Denote the matrix
		\[
		Y=\som{y[r]a_r\transp{a_r}}=\som{\pa{|\transp{a_r}\avx|^2+\epsi[r]}a_r\transp{a_r}}.
		\]
		We have \whp 
		\[
		\normm{Y-\esp{Y}}\leq\vrho\pa{\normm{\avx}^2+\norm{\epsi}{\infty}}\leq\vrho\pa{1+c_s}\normm{\avx}^2.
		\]
		Let $\wtilde{x}$ be the eigenvector associated with the largest eigenvalue $\wtilde{\lambda}$ of $Y$ such that $\normm{\wtilde{x}}=\normm{\avx}$ (obviously $\wtilde{\lambda}$ is nonnegative since $Y$ is semidefinite positive). Then, 
		\begin{align*}
			\vrho\pa{1+c_s}\normm{\avx}^4&\geq\abs{\transp{\wtilde{x}}\Ppa{Y-2\avx\transp{\avx}-\normm{\avx}^2\Id-\widetilde{\epsi}\Id}\wtilde{x}} \\
			&= \abs{\wtilde{\lambda}\normm{\avx}^2 -2\pa{\transp{\wtilde{x}}\avx}^2-\normm{\avx}^4-\widetilde{\epsi}\normm{\avx}^2}
		\end{align*}
		Hence 
		\[
		2\pa{\transp{\wtilde{x}}\avx}^2 \geq \wtilde{\lambda}\normm{\avx}^2 -\normm{\avx}^4-\widetilde{\epsi}\normm{\avx}^2-\vrho\pa{1+c_s}\normm{\avx}^4.
		\]
		Moreover, since $\wtilde{\lambda}$ is the largest eigenvalue of $Y$, applying the concentration inequality at $\avx$  \whp
		\begin{align*}
			\wtilde{\lambda}\normm{\avx}^2 \geq \transp{\avx}Y\avx &\geq \transp{\avx}\Ppa{2\avx\transp{\avx}+\normm{\avx}^2\Id-\widetilde{\epsi}\Id}\avx - \vrho\pa{1+c_s}\normm{\avx}^4\\
			&=3\normm{\avx}^4+\widetilde{\epsi}\normm{\avx}^2-\vrho\pa{1+c_s}\normm{\avx}^4.
		\end{align*}
		Merging the last two inequalities, we get
		\begin{align*}
			2\pa{\transp{\wtilde{x}}\avx}^2 &\geq 3\normm{\avx}^4+\widetilde{\epsi}\normm{\avx}^2-\vrho\pa{1+c_s}\normm{\avx}^4-\normm{\avx}^4-\widetilde{\epsi}\normm{\avx}^2-\vrho\pa{1+c_s}\normm{\avx}^4\\
			&=2\normm{\avx}^4-2\vrho\pa{1+c_s}\normm{\avx}^4.    
		\end{align*}
		Which implies that
		\[
		\dist(\wtilde{x},\overline{\calX}) \leq \sqrt{2 - 2\sqrt{1-\vrho\pa{1+c_s}}}\normm{\avx} .
		\]
		By definition of $\xo$ in Algorithm~\ref{alg:algoSPIniit}, $\xo=\sqrt{\frac{1}{m}\sum_r y[r]} \frac{\wtilde{x}}{\normm{\avx}}=\sqrt{\frac{1}{m}\sum\limits_r\pa{|\transp{a_r}\avx|^2+\epsi[r]}} \frac{\wtilde{x}}{\normm{\avx}}$, and thus \whp
		\[
		\normm{\xo-\wtilde{x}} = \abs{\sqrt{\frac{m^{-1}\sum_r y[r]}{\normm{\avx}^2}} - 1}\normm{\avx} = \abs{\sqrt{\frac{m^{-1}\sum_r\pa{|\transp{a_r}\avx|^2+\epsi[r]}}{\normm{\avx}^2}} - 1}\normm{\avx}\leq \frac{\vrho\pa{1+c_s}\normm{\avx}}{
			2},
		\]
		it comes out that,
		\[
		\dist(\xo,\overline{\calX}) \leq \dist(\wtilde{x},\overline{\calX}) + \normm{\xo-\wtilde{x}} \leq \Ppa{\sqrt{2-2\sqrt{1-\vrho(1+c_s)}} + \frac{\vrho(1+c_s)}{2}}\normm{\avx} .
		\]
		
		\item Under our sampling complexity bound, event $\calE_{\rm conH}$ defined by \eqref{eq:uniconcen_G} holds true \whp. It then follows from Lemma~\ref{pro:Lsmad_G} applied at $\avx$ and $\xo$, that
		\begin{align*}
			D_{f}(\xo,\avx) \leq L D_{\psi}(\xo,\avx).
		\end{align*}
		The latter implies that
		\begin{align*}
			&f(\xo)\leq f(\avx)+\pscal{\nabla f(\avx),\xo-\avx}+ L D_{\psi}(\xo,\avx),\\
			&f(\xo)\leq f(\avx)+\normm{\nabla f(\avx)}\normm{\xo-\avx}+ L\frac{\Theta(\eta_1(\vrho)\normm{\avx})}{2}\normm{\xo-\avx}^2,
		\end{align*}
		Since  $\nabla f(\avx)=\frac{1}{m}\sum\limits_{r=1}^{m}\epsi[r]a_r\transp{a_r}\avx$, we obtain from \eqref{eq:2ndconc} that
		\[
		\normm{\nabla f(\avx)}\leq \norm{\epsi}{\infty}\normm{\frac{1}{m}\sum\limits_{r=1}^{m}a_r\transp{a_r}}\normm{\avx}\leq(1+\vrho)\norm{\epsi}{\infty}\normm{\avx}. 
		\]
		Combining the two last inequality yields to 
		\begin{align*}
			f(\xo)\leq f(\avx)+\Ppa{(1+\vrho)c_s\normm{\avx}+ L\frac{\Theta(\eta_1(\vrho)\normm{\avx})}{2}\eta_1(\vrho)}\eta_1(\vrho)\normm{\avx}^2.
		\end{align*}
		
		\item In view of \ref{pro:spectralinit-i}, it is sufficient to show that $\eta_1(\vrho)\normm{\avx} \leq \frac{r}{\max\Ppa{\sqrt{\Theta(r)},1}}$. Since from Proposition~\ref{pro:bregman}\ref{pro:bregman4},  we have 
		\[
		\Theta(r) \leq 6(\normm{\avx}^2+r^2) + 1 \leq \para{6\Ppa{1+\frac{\pa{1-\lambda}^2}{3}-\frac{4\normm{\epsi}^2}{m\sigma\normm{\avx}^2}} + 1}\max\Ppa{\normm{\avx}^2,1} ,
		\]
		and $\eta_1$ is an increasing function, we conclude.
	\end{enumerate}
\end{proof}

%%%%%%%%%%%%%%%%%%%%%%%%%%%%%%%%%%%%%%%%%%%%%%%%%%%
\section{Landscape of the Noise-Aware Objective with Gaussian Measurements}\label{landscapestudy}  
%%%%%%%%%%%%%%%%%%%%%%%%%%%%%%%%%%%%%%%%%%%%%%%%%%%
\subsection{Warm up: Critical points of $\esp{f}$}
%\subsubsection{Critical points of \texorpdfstring{$\esp{f}$}{TEXT}}
We start by studying and characterizing the set of critical points of $\esp{f}$. This can be seen as the asymptotic behavior of the critical points of $f$ when the number of measurements $m$ grows to $+\infty$. 
\begin{proposition}\label{pro: crit_E(f)}We have 
	\[ 
	\crit(\esp{f}) = \ens{0} \cup  \overline{\calX}_\epsi \cup\enscondlr{x\in\bbR^n}{\transp{\avx}x=0, \normm{x}^2=\frac{1}{3}\Ppa{\normm{\avx}^2+\widetilde{\epsi}}},
	\]
	where $\overline{\calX}_\epsi\eqdef\ensB{\pm\avx\sqrt{1+\frac{\widetilde{\epsi}}{3\normm{\avx}^2}}}$. Those sets are respectively, the local maximizer, the set of global minimizes, and strict saddle points of $\esp{f}$.
\end{proposition}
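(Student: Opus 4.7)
The plan is to combine the expected Hessian formula from Lemma~\ref{lem:expe_lem_G} with an explicit computation of $\esp{\nabla f}$, extract the critical points by a case split, and then classify them via the signature of $\esp{\nabla^2 f}$ at each. Taking the expectation of \eqref{eq:gradf} and using the standard Gaussian moment identity $\esp{|\transp{a_r}x|^2 a_r\transp{a_r}} = 2x\transp{x} + \normm{x}^2\Id$ together with $\esp{a_r\transp{a_r}} = \Id$, a direct calculation gives
\[
\esp{\nabla f(x)} = \Ppa{3\normm{x}^2 - \normm{\avx}^2 - \widetilde{\epsi}}x - 2\pscal{\avx}{x}\avx.
\]
Setting this to zero and splitting cases on whether $\pscal{\avx}{x}$ vanishes yields the claimed partition. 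If $\pscal{\avx}{x} = 0$, the equation reduces to $(3\normm{x}^2 - \normm{\avx}^2 - \widetilde{\epsi})x = 0$, producing either $x=0$ or the ring $\enscondlr{x}{\pscal{\avx}{x}=0,\ \normm{x}^2 = \Ppa{\normm{\avx}^2+\widetilde{\epsi}}/3}$. If $\pscal{\avx}{x}\neq 0$, then necessarily $x = \alpha\avx$, and substitution forces $\alpha^2 = 1 + \widetilde{\epsi}/(3\normm{\avx}^2)$, recovering $\overline{\calX}_\epsi$.

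For the classification, I would evaluate $\esp{\nabla^2 f}$ at each critical point using Lemma~\ref{lem:expe_lem_G}. At $x=0$,
\[
\esp{\nabla^2 f(0)} = -2\avx\transp{\avx} - (\normm{\avx}^2 + \widetilde{\epsi})\Id ,
\]
which is negative definite since $\widetilde{\epsi} \geq 0$ by Assumption~\ref{ass:SNR}; hence $0$ is a strict local maximizer. At $x = \pm\beta\avx$ with $\beta^2 = 1+\widetilde{\epsi}/(3\normm{\avx}^2)$, using $\normm{x}^2 = \normm{\avx}^2 + \widetilde{\epsi}/3$ collapses the $\Id$ terms and yields
\[
\esp{\nabla^2 f(\pm\beta\avx)} = \pa{6\beta^2 - 2}\avx\transp{\avx} + 2\normm{\avx}^2\Id ,
\]
which is positive definite, so these points are strict local minimizers. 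Finally, on the ring, the $\Id$ pieces again cancel and
\[
\esp{\nabla^2 f(x)} = 6x\transp{x} - 2\avx\transp{\avx} .
\]
Since $\pscal{\avx}{x}=0$, the quadratic form is strictly positive in direction $x$ and strictly negative in direction $\avx$, exhibiting both a direction of positive curvature and a direction of negative curvature, so $x$ is a strict saddle.

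It remains to upgrade the local minimizers $\overline{\calX}_\epsi$ to global minimizers. For this I would note that $\esp{f}$ is a quartic polynomial whose leading behavior at infinity is $\tfrac{3}{4}\normm{x}^4$, so $\esp{f}$ is coercive and $\Argmin(\esp{f})$ is nonempty and contained in $\crit(\esp{f})$. Since $0$ is a strict local maximum and the ring consists of strict saddles, the infimum must be attained on $\overline{\calX}_\epsi$. If desired, one cements this by a one-line comparison of values, evaluating $\esp{f}$ at each class of critical points. The main obstacle is purely bookkeeping: getting the Gaussian moment identities right and carefully handling the case split in the gradient equation; there is no deeper difficulty.
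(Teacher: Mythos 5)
Your proof is correct, and it enumerates the critical points by a genuinely different route than the paper. You solve $\esp{\nabla f(x)}=0$ directly: rewriting it as $\para{3\normm{x}^2-\normm{\avx}^2-\widetilde{\epsi}}x = 2\pscal{\avx,x}\avx$ and splitting on whether $\pscal{\avx,x}$ vanishes immediately yields $x=0$, the orthogonal ring $\normm{x}^2=\frac{1}{3}\para{\normm{\avx}^2+\widetilde{\epsi}}$, or $x\in\Span(\avx)$ with $\alpha^2=1+\frac{\widetilde{\epsi}}{3\normm{\avx}^2}$. The paper instead partitions $\bbR^n$ into norm shells and excludes critical points from the three open regions by sign bounds on $\pscal{x,\nabla\esp{f(x)}}$ and an eigenvalue contradiction for the middle shell, before identifying the critical points on the two remaining spheres. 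Your algebraic solve is shorter and cleaner for the population objective; the paper's shell argument is longer but deliberately foreshadows the region decomposition $\calR_1,\calR_2,\calR_3$ used for the finite-$m$ landscape in Proposition~\ref{pro:Noisyland}, where no closed-form solve of $\nabla f=0$ exists and one must settle for gradient lower bounds on regions. The Hessian classification at the three families of critical points is identical in both arguments, and both invoke $\widetilde{\epsi}\geq 0$ from Assumption~\ref{ass:SNR} at the origin and at $\overline{\calX}_\epsi$. One point in your favor: you make explicit the upgrade from local to global minimality --- coercivity of the quartic $\esp{f}$ (leading term $\frac{3}{4}\normm{x}^4$) gives that $\Argmin(\esp{f})$ is nonempty and contained in $\crit(\esp{f})$, and eliminating the strict local maximizer $0$ and the strict saddles leaves only $\overline{\calX}_\epsi$ --- a step the paper's proof asserts without spelling out. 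Your optional value comparison is not even needed: $\esp{f}$ depends on $x$ only through $\normm{x}^2$ and $\abs{\transp{\avx}x}^2$, so it takes the same value at both points of $\overline{\calX}_\epsi$, and the elimination argument alone suffices.
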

Before proving this result, we the closed form expressions of the expectation of $f$ and its derivatives.
\begin{lemma}\label{espCDP} For all $x\in\bbR^n$, we have: 
	\begin{align*}
		&\esp{f(x)}=\frac{3}{4}\para{\normm{x}^4+\normm{\avx}^4}-\frac{1}{2}\normm{\avx}^2\normm{x}^2-(|\transp{\avx}x|^2+\frac{\normm{\epsilon}^2}{4m}-\frac{\widetilde{\epsi}\pa{\normm{x}^2-\normm{\avx}^2}}{2},\\
		&\nabla\esp{f(x)}=3\normm{x}^2x-2\avx\pa{\transp{\avx}x}-\normm{\avx}^2x-\widetilde{\epsi}x,\\ 
		&\nabla^2\esp{f(x)}=3\Ppa{2x\transp{x}+\normm{x}^2\Id}-2\avx\transp{\avx}-\normm{\avx}^2\Id-\widetilde{\epsi}\Id. 
	\end{align*}
\end{lemma}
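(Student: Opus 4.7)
The plan is to exploit the additive decomposition $f=f_{\mathrm{NL}}+f_{\mathrm{Ny}}$ recorded in \eqref{eq:defsum} together with linearity of expectation and of differentiation. Since $f$ is a degree-four polynomial in the entries of the $a_r$'s, each $|\transp{a_r}x|^{2k}$ has finite moments of every order, so dominated convergence lets me swap expectation with $\nabla$ and $\nabla^2$; consequently it suffices to compute $\esp{f_{\mathrm{NL}}}$, $\esp{f_{\mathrm{Ny}}}$ and their derivatives separately.

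For the noiseless part I would simply quote \cite[Lemma~B.1]{godeme2023provable}, which furnishes the closed forms $\esp{f_{\mathrm{NL}}(x)}=\frac{3}{4}(\normm{x}^4+\normm{\avx}^4)-\frac{1}{2}\normm{\avx}^2\normm{x}^2-|\transp{\avx}x|^2$, $\nabla\esp{f_{\mathrm{NL}}(x)}=3\normm{x}^2x-2\avx(\transp{\avx}x)-\normm{\avx}^2x$, and $\nabla^2\esp{f_{\mathrm{NL}}(x)}=3(2x\transp{x}+\normm{x}^2\Id)-2\avx\transp{\avx}-\normm{\avx}^2\Id$. These in turn rest on the standard Isserlis/Wick identities $\esp{|\transp{a_r}x|^2}=\normm{x}^2$, $\esp{|\transp{a_r}x|^4}=3\normm{x}^4$ and $\esp{|\transp{a_r}x|^2|\transp{a_r}\avx|^2}=\normm{x}^2\normm{\avx}^2+2|\transp{\avx}x|^2$ for the standard Gaussian ensemble.

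For the noisy part I would use the explicit formula $f_{\mathrm{Ny}}(x)=-\frac{1}{2m}\sum_r\epsi[r]\bigl(|\transp{a_r}x|^2-|\transp{a_r}\avx|^2\bigr)+\frac{\normm{\epsi}^2}{4m}$, together with $\esp{a_r\transp{a_r}}=\Id$ (so $\esp{|\transp{a_r}x|^2}=\normm{x}^2$), to obtain termwise $\esp{f_{\mathrm{Ny}}(x)}=-\frac{\widetilde{\epsi}}{2}(\normm{x}^2-\normm{\avx}^2)+\frac{\normm{\epsi}^2}{4m}$, $\esp{\nabla f_{\mathrm{Ny}}(x)}=-\widetilde{\epsi}x$ (read off from the noisy term in \eqref{eq:gradf}) and $\esp{\nabla^2 f_{\mathrm{Ny}}(x)}=-\widetilde{\epsi}\Id$ (from the noisy term in \eqref{eq:hessf}). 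Adding the noiseless and noisy expressions term by term yields the three claimed identities.

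There is no real obstacle here: the argument reduces to careful bookkeeping of Gaussian moments, with the one mildly delicate point being the fourth-order Isserlis identity, and even that is already absorbed in the cited result for $f_{\mathrm{NL}}$. The only nontrivial step I would flag is justifying the interchange of $\esp{\cdot}$ with $\nabla$ and $\nabla^2$, which is immediate from polynomial growth and dominated convergence.
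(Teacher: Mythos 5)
Your proposal is correct and follows essentially the same route as the paper: decompose $f=f_{\mathrm{NL}}+f_{\mathrm{Ny}}$, invoke \cite[Lemma~B.1]{godeme2023provable} (equivalently the Wick identities) for the noiseless part, and compute the noisy part from $\esp{a_r\transp{a_r}}=\Id$. The only cosmetic difference is that the paper obtains the gradient and Hessian by differentiating the closed-form expression for $\esp{f}$ (which needs no interchange argument at all), whereas you read them off \eqref{eq:gradf}--\eqref{eq:hessf} and justify swapping $\esp{\cdot}$ with $\nabla$; both are valid and yield the same identities.
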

\begin{proof} 
	By linearity of the expectation, we have $\esp{f(x)}=\esp{f_{\mathrm{NL}}(x)}+ \esp{f_{\mathrm{Ny}}(x)}$. Linearity again yields
	\begin{align*}
		\esp{f_{\mathrm{Ny}}(x)}&=-\hsom{\epsi[r]\Ppa{\transp{x}\esp{a_r\transp{a_r}}x-\transp{\avx}\esp{a_r\transp{a_r}}\avx}}+\frac{\normm{\epsi}^2}{4m},\\
		&=-\hsom{\epsi[r]\para{\normm{x}^2-\normm{\avx}^2}}+\frac{\normm{\epsi}^2}{4m}.\numberthis\label{eq:efn}
	\end{align*}
	We also have
	\begin{align*}
		\esp{f_{\mathrm{NL}}(x)}=\qsom{\esp{|\transp{a_r}x|^4}}+\qsom{\esp{|\transp{a_r}\avx|^4}}-\hsom{|\transp{a_r}x|^2|\transp{a_r}\avx|^2}.
	\end{align*}
	From \cite[Lemma B.1]{godeme2023provable}, we know that 
	\[
	\forall x\in\bbR^n, \quad\esp{\Som{|\transp{a_r}x|^2}a_r\transp{a_r}}=2x\transp{x}+\normm{x}^2\Id. 
	\]
	Therefore we have 
	\[
	\esp{\qsom{|\transp{a_r}x|^4}}=\transp{x}\Ppa{\esp{\qsom{|\transp{a_r}x|^2}a_r\transp{a_r}}}x=\frac{1}{4}\transp{x}\Ppa{2x\transp{x}+\normm{x}^2}x=\frac{3}{4}\normm{x}^4,
	\]
	and
	\[
	\esp{\hsom{|\transp{a_r}x|^2|\transp{a_r}\avx|^2}}=\transp{x}\Ppa{\avx\transp{\avx}+\frac{1}{2}\normm{\avx}^2}x=|\transp{\avx}x|^2+\frac{1}{2}\normm{\avx}^2\normm{x}^2.
	\]
	Whence we have 
	\begin{align}\label{eq:efnl}
		\esp{f_{\mathrm{NL}}(x)}=\frac{3}{4}\para{\normm{x}^4+\normm{\avx}^4}-\frac{1}{2}\normm{\avx}^2\normm{x}^2-|\transp{\avx}x|^2.
	\end{align}
	
	The claim follows simply by summing  \eqref{eq:efn} and \eqref{eq:efnl}.
	
	We deduce the gradient and the hessian by straightforward derivation of $\esp{f}$.
\end{proof}

\begin{proof}(Proposition~\ref{pro: crit_E(f)})
	\begin{itemize}
		\item \textbf{The origin :} Let us observe that $\nabla\esp{f(0)}=0$, it follows that the value of the hessian at zero satisfies
		\begin{align*}
			\nabla^2\esp{f(0)}&=-\normm{\avx}^2\Id-2\avx\transp{\avx}-\widetilde{\epsi}\Id \lon -\pa{\normm{\avx}^2+\widetilde{\epsi}}\Id-2\avx\transp{\avx}\prec0,
		\end{align*}
		where we have used that $\widetilde{\epsi}$ is non-negative (see Assumption~\ref{ass:SNR}).  It follows that $0$ is a local maximizer of $\esp{f}$. 
		\item \textbf{ Subspaces with no critical points:} 
		\begin{enumerate}
			\item For any point in $\ensB{x\in\bbR^n: 0<\normm{x}^2<\frac{\normm{\avx}^2+\widetilde{\epsi}}{3}}$, we have 
			\begin{align*}
				\pscal{x,\nabla\esp{f(x)}}&=\para{3\normm{x}^2-\normm{\avx}^2-\widetilde{\epsi}}\normm{x}^2-2|\transp{\avx}x|^2 \leq \para{3\normm{x}^2-\normm{\avx}^2-\widetilde{\epsi}}\normm{x}^2 < 0 .
			\end{align*}
			We deduce that, $\pscal{x,\nabla\esp{f(x)}}<0$ which implies that $\normm{\nabla\esp{f(x)}}$ is bounded away from zero on this region, and thus that there are no critical points there.
			
			\item Consider a point in $\ensB{x\in\bbR^n: \frac{\normm{\avx}^2+\widetilde{\epsi}}{3} < \normm{x}^2<\normm{\avx}^2+\frac{\widetilde{\epsi}}{3}}$ and recall that it is a critical point if and only if 
			\begin{equation}\label{eq:relaD2}
			\para{3\normm{x}^2-\normm{\avx}^2-\widetilde{\epsi}}x=2(\avx\transp{\avx})x.
			\end{equation}
			Combining Assumption~\ref{ass:SNR} and the fact that $\normm{x}^2>\frac{\normm{\avx}^2+\widetilde{\epsi}}{3}$, we get that $\para{3\normm{x}^2-\normm{\avx}^2-\widetilde{\epsi}}/2$ is the positive eigenvalue of the rank-one matrix matrix $\avx\transp{\avx}$. This is equivalent to $3\normm{x}^2-\normm{\avx}^2-\widetilde{\epsi}=2\normm{\avx}^2$, \ie, $\normm{x}^2=\normm{\avx}^2+\frac{\widetilde{\epsi}}{3}$ which contradicts the definition of this region, showing again that there are no critical points there. 
			
			\item For a point in the region $\ensB{x\in\bbR^n: \normm{x}^2>\normm{\avx}^2+\frac{\widetilde{\epsi}}{3}}$, we have the lower bound
			\begin{align*}
				\pscal{x,\nabla\esp{f(x)}}&=\para{3\normm{x}^2-\normm{\avx}^2-\widetilde{\epsi}}\normm{x}^2-2|\transp{\avx}x|^2,\\
				&\geq \para{3\normm{x}^2-3\normm{\avx}^2-\widetilde{\epsi}}\normm{x}^2>0.
			\end{align*}
			Hence, $\normm{\nabla\esp{f(x)}}$ is bounded away from zero on this region yielding the same conclusion.
		\end{enumerate}

		\item \textbf{Strict saddle points:} A point in the sphere $\ensB{x\in\bbR^n: \normm{x}^2=\frac{\normm{\avx}^2+\widetilde{\epsi}}{3}}$ is a critical point if it is orthogonal to the true vector $\avx$. Indeed we have, 
		\[
		\nabla\esp{f(x)}=0 \iff \Ppa{3\normm{x}^2-\normm{\avx}^2-\widetilde{\epsi}}x = 2\avx\transp{\avx}x \iff \transp{\avx}x=0.
		\]
		Besides, for any $v\in\bbR^{n}$ we have 
		\begin{align*}
			\pscal{v,\nabla^2\esp{f(x)}v}&=6|\transp{v}x|^2+3\normm{x}^2\normm{v}^2-2|\transp{v}\avx|^2-\normm{\avx}^2\normm{v}^2-\widetilde{\epsi}\normm{v}^2 \\
			&=6|\transp{v}x|^2-2|\transp{v}\avx|^2+\normm{v}^2\Ppa{3\normm{x}^2-\normm{\avx}^2-\widetilde{\epsi}} \\
			&=6|\transp{v}x|^2-2|\transp{v}\avx|^2.
		\end{align*}
		In the direction $v=x$, we deduce that 
		\begin{align*}
			\pscal{x,\nabla^2\esp{f(x)}x}=6\normm{x}^4>0 , 
		\end{align*}
		and in the direction $v=\avx$ we have
		\begin{align*}
			\pscal{\avx,\nabla^2\esp{f(x)}\avx}=-2\normm{\avx}^4<0 ,
		\end{align*}
		where we have used orthogonality of $x$ and $\avx$. These facts show that the critical points in this region, \ie points orthogonal to $\avx$, are strict saddle points of $\esp{f}$.

		\item \textbf{Global minimizers:} In view of the above, local/global minimizers can only occur on the sphere $\ensB{x\in\bbR^n: \normm{x}^2=\normm{\avx}^2+\frac{\widetilde{\epsi}}{3}}$. Any point in his set is a critical point of $\esp{f}$ if and only if 
		\begin{align*}
			\nabla\esp{f(x)}=0 \iff & \para{3\normm{x}^2-\normm{\avx}^2-\widetilde{\epsi}}x=2\avx\transp{\avx}x \iff \para{\normm{\avx}^2\Id-\avx\transp{\avx}}x=0.  
		\end{align*}
		Therefore, $x$ is a critical point on this region if and only if $x$ is an eigenvector of $\avx\transp{\avx}$, that is $x \in \Span(\avx)$, or equivalently, $\exists \beta \in \bbR$ such that $x=\beta\avx$ with
		\begin{align*}
			\normm{x}^2=\beta^2\normm{\avx}^2=\normm{\avx}^2+\frac{\widetilde{\epsi}}{3} \iff \beta=\pm \sqrt{1+\frac{\widetilde{\epsi}}{3\normm{\avx}^2}}.
		\end{align*}
		The set of critical points in this region is reduced to $\overline{\calX}_\epsi\eqdef\ensB{\pm\avx\sqrt{1+\frac{\widetilde{\epsi}}{3\normm{\avx}^2}}}$. For $x\in\overline{\calX}_\epsi$ we have
		\begin{align*}
			\nabla^2\esp{f(x)}&=6x\transp{x}-2\avx\transp{\avx}+
			\Ppa{3\normm{x}^2-\normm{\avx}^2-\widetilde{\epsi}}\Id\\
			&=(6\beta^2-2)\avx\transp{\avx}+ 2\normm{\avx}^2\Id \succeq 2\normm{\avx}^2\Id.
		\end{align*}
		Indeed, we have
		\[
		6\beta^2-2=4+\frac{2\widetilde{\epsi}}{\normm{\avx}^2}\geq 4 > 0 ,
		\]
		where we use again the non-negativity of $\widetilde{\epsi}$ in Assumption~\ref{ass:SNR}. We conclude that $\overline{\calX}_\epsi$ is the set of global minimizers of $\esp{f}$. 
	\end{itemize}
\end{proof}

\subsection{Main result: Critical points of $f$}
In this section, we study the landscape of the objective function $f$ for the Gaussian measurement model.  Our main result hereafter characterizes the set of critical points of $f$ for $m$ large enough.
\begin{theorem}\label{thm: Landscapethm}\textbf{(Critical points of $f$)} Fix $\lambda\in\left]\frac{1}{9\sqrt{2}},1\right[$. Let us assume that the noise vector satisfies Assumption~\ref{ass:SNR}. If $m \gtrsim n\log(n)^3$, then
	\begin{equation}\label{eq:crit_charac}
	\crit(f)=\Argmin(f) \cup \strisad(f) 
	\end{equation}
	where $\Argmin(f)=\ens{\pm \xsol}$. This holds with probability of at least $1-\frac{c}{m}$ where $c$ is a positive numerical constant.
	%\begin{enumerate}[label=(\roman*)]
	%    \item\label{thm: Landscapethm-i} If the number of samples obeys $m \gtrsim n\log(n)$ then the set $\degcrit(f)$ is empty. 
	%    \item \label{thm: Landscapethm-ii} If the number of sample obeys  $m \gtrsim n\log(n)^3$  then
	%\begin{equation}\label{eq:crit_charac}
	%\crit(f)=\Argmin(f) \cup \strisad(f) .  
	%\end{equation}
	% \end{enumerate}
	%Both statements are true with a probability of at least  $1-\frac{c}{m}$ where  $c$ is a positive numerical constant.
\end{theorem}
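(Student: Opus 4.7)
The plan is to transfer the characterization of $\crit(\esp f)$ established in Proposition~\ref{pro: crit_E(f)} to $\crit(f)$ via uniform concentration of $\nabla f$ and $\nabla^2 f$ around their expectations. First, combining formula~\eqref{eq:gradf} with the injectivity event $\calE_{\rm inj}$ and Assumption~\ref{ass:SNR}, a direct computation of $\pscal{x,\nabla f(x)}$ shows that outside a ball $B(0,R)$ of radius $R=O(\normm{\avx})$ this inner product is strictly positive, so $\nabla f(x)\neq 0$ there. This reduces the problem to analysing $\crit(f)\cap B(0,R)$.

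Inside $B(0,R)$ I would mimic the partition used in the proof of Proposition~\ref{pro: crit_E(f)} and show that, up to perturbations controlled by the concentration events, critical points of $f$ can only occur in neighbourhoods of the critical points of $\esp f$, where they are either strict saddles or global minimizers:
\begin{enumerate}[label=(\alph*)]
\item At the origin, $\nabla f(0)=0$ identically and, under Assumption~\ref{ass:SNR} and $\calE_{\rm conH}$, $\nabla^2 f(0)=-\frac{1}{m}\sum_r\pa{|\transp{a_r}\avx|^2+\epsi[r]}a_r\transp{a_r}\prec 0$, so $0\in\strisad(f)$.
\item On the shell $\{0<\normm{x}^2<(\normm{\avx}^2+\widetilde{\epsi})/3-\delta\}$, $\pscal{x,\nabla\esp{f(x)}}\leq -c\normm{x}^2$ for some $c>0$; promoting this to $f$ via uniform gradient concentration rules out critical points in this region.
\item In a thin neighbourhood of the saddle sphere $\{\normm{x}^2=(\normm{\avx}^2+\widetilde{\epsi})/3\}$, Hessian concentration yields $\transp{\avx}\nabla^2 f(x)\avx<0$ when $x$ is sufficiently close to the subspace $\{\transp{\avx}x=0\}$; when $x$ is far from this subspace, the component of $\nabla f(x)$ along $\avx$ is bounded away from zero. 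In either case, any critical point there is a strict saddle.
\item In the outer annulus $\{(\normm{\avx}^2+\widetilde{\epsi})/3+\delta<\normm{x}^2<\normm{\avx}^2+\widetilde{\epsi}/3-\delta\}$, gradient concentration again excludes critical points.
\item In balls of radius $\rho=\frac{1-\lambda}{\sqrt 3}\normm{\avx}$ around $\pm\avx\sqrt{1+\widetilde{\epsi}/(3\normm{\avx}^2)}$, the local relative strong convexity of Lemma~\ref{pro:Localconvex_G} forces at most one critical point in each ball, which must be a global minimizer, giving $\Argmin(f)=\{\pm\xsol\}$.
\end{enumerate}
Concatenating these regional statements yields $\crit(f)\subset\Argmin(f)\cup\strisad(f)$; the reverse inclusion is immediate.

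The chief technical obstacle is establishing a uniform concentration of $\nabla f$ over $B(0,R)$ with tolerance proportional to $\normm{x}^3+\normm{\avx}^3+\norm{\epsi}{\infty}\normm{x}$, holding with high probability as soon as $m\gtrsim n\log^3 n$. This requires an $\epsilon$-net argument over $B(0,R)$ combined with Bernstein-type inequalities for cubic Gaussian polynomials $(\transp{a_r}x)^3$; the sub-Weibull tails of these variables produce the extra $\log^2 n$ factor beyond the $n\log n$ sample complexity of $\calE_{\rm conH}$. A secondary difficulty is making the partition quantitative: the widths $\delta$ of the annular/spherical bands must be tuned so that the gradient and Hessian concentration tolerances simultaneously control the strict-negativity in (b)--(d) and the strict saddle direction in (c). The remaining arguments follow the noiseless template of \cite{godeme2023provable} adapted to the noisy decomposition~\eqref{eq:defsum}, with Assumption~\ref{ass:SNR} ensuring that the noise-induced corrections in $\esp{\nabla f}$ and $\esp{\nabla^2 f}$ do not flip any of the sign conditions exploited above.
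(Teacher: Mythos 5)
Your overall architecture is the same as the paper's: partition $\bbR^n$ into regions mirroring the critical structure of $\esp{f}$ (Proposition~\ref{pro: crit_E(f)}), transfer negative curvature, large gradient, and local relative strong convexity to $f$ by uniform concentration, and conclude near the minimizers via Lemma~\ref{pro:Localconvex_G} (the paper does this with the regions $\calR_1,\calR_2^x,\calR_2^h,\calR_3$ adapted from \cite{sun_geometric_2018} and the statements collected in Proposition~\ref{pro:Noisyland}). However, as written your plan has a genuine gap near the origin: your stated uniform gradient tolerance $\vrho\Ppa{\normm{x}^3+\normm{\avx}^3+\norm{\epsi}{\infty}\normm{x}}$ does not vanish as $x\to 0$, while the expected radial derivative on your inner shell is only $\pscal{x,\nabla\esp{f(x)}}\leq -3\delta\normm{x}^2$; hence for $\normm{x}\lesssim \vrho\normm{\avx}^3/\delta$ the deviation swamps the signal and step (b) cannot rule out spurious critical points in a punctured ball around $0$ (treating the single point $x=0$ separately, as in your step (a), does not close this). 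The fix is either to exploit the multiplicative structure $\nabla f(x)=M(x)x$ with $M(x)=\frac{1}{m}\sum_r\Ppa{|\transp{a_r}x|^2-|\transp{a_r}\avx|^2-\epsi[r]}a_r\transp{a_r}$, so the deviation is proportional to $\normm{x}$ (essentially the Hessian-type concentration of Lemma~\ref{lem:conhess_G}), or to do what the paper does: define a negative-curvature region $\calR_1$ via $\pscal{\avx,\esp{\nabla^2 f(x)}\avx}$, which contains a full neighborhood of the origin, and declare every critical point there a strict saddle through \eqref{eq:negcurv}, so that no gradient-based exclusion near $0$ is needed at all.

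A second concrete problem is that your regions (a)--(e) do not cover $\bbR^n$: the shell of radii between the top of your annulus (d) and the complement of $B(0,R)$, minus the minimizer balls (e), is unaccounted for --- for instance points with $\normm{x}\approx\normm{\avx}$ at intermediate angle to $\avx$. On that shell the radial derivative degenerates as $x$ aligns with $\Span(\avx)$ (the minimizers sit there), so $\pscal{x,\nabla f(x)}$ cannot be bounded below uniformly and your step-one reduction does not extend inward. This is precisely the paper's region $\calR_2^h$, handled with the directional bound $\pscal{d_x,\nabla f(x)}\gtrsim \normm{\avx}^2\normm{x}\normm{d_x}$ where $d_x$ points toward $\pm\avx$; that directional concentration (borrowed from \cite[Proposition~2.6]{sun_geometric_2018}) is in fact where $m\gtrsim n\log^3(n)$ enters, and the verification that $\calR_2\subset\calR_2^x\cup\calR_2^h$ in Proposition~\ref{pro:Noisyland} is a delicate quantitative covering with explicit constants in which the noise quantities $\widetilde{\epsi}$ and $c_s$ must be tracked throughout --- your proposal only gestures at this tuning. (A minor additional point: your step (e) centers the strong-convexity balls at $\pm\avx\sqrt{1+\widetilde{\epsi}/(3\normm{\avx}^2)}$, whereas Lemma~\ref{pro:Localconvex_G} is stated on $B(\pm\avx,\rho)$; under Assumption~\ref{ass:SNR} the shift is small and a recentering argument fixes this, but it should be said.) In short: right strategy, but the two regions where the paper does its hardest work --- near the origin and on the near-sphere shell --- are exactly the ones your partition mishandles or omits.
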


\begin{remark}\label{rem: Landscapethm} {\ }
	\begin{itemize}
		\item In \cite[Theorem~2.2]{sun_geometric_2018}, the authors study the geometry of $f$ in the noiseless case here coined as $f_{\mathrm{NL}}$. We aim with our result to extend it to the noisy case with small enough noise (see Assumption~\ref{ass:SNR}).
		
		%\item  The  first claim of Theorem~\ref{thm: Landscapethm} implies that there are no flat directions in the landscape of the objective function $f$ as soon as the number of samples is large enough. This shows that the set of critical points of $f$ does not contain bad critical points therefore the function just admits  either strict saddle points or local/global minimizers. The strict saddle avoidance of the Mirror descent for minimizing $f$ will imply that the sequence will  always converge to local/global minimizers of the  function  $f$.  
		
		\item This result shows that when the number of measurements $m$ is sufficiently large and the noise $\epsi$ is very small compared to the  true vector which is entailed  by a large SNR, then the set of critical points of the objective function $f$ is reduced to the set of global minimizers $\Argmin(f)$ and the set of strict saddle points $\strisad(f)$. The strict saddle avoidance of mirror descent  will then imply that the sequence provided by mirror descent will always converge to global minimizers of the function $f$. 
	\end{itemize} 
\end{remark}

We recall the radius $\rho=\frac{1-\lambda}{\sqrt{3}}\normm{\avx}$ defined in Lemma~\ref{pro:Localconvex_G}. To prove Theorem~\ref{thm: Landscapethm}, we consider the following regions of $\bbR^n$ which are helpful to characterize the landscape of $f$:
\begin{align}
	&\calR_1=\ensB{x\in\bbR^n: \pscal{\avx,\esp{\nabla^2f(x)}\avx}\leq -\frac{1}{100}\normm{x}^2\normm{\avx}^2-\frac{1}{50}\normm{\avx}^4},\label{eq:calR_1}\\
	&\calR_3=\ensB{x\in\bbR^n: \dist(x,\overline{\calX})\leq \rho},\label{eq:calR_2}\\
	&\calR_2=(\calR_1\cup\calR_3)^c.\label{eq:calR_3} 
\end{align}
We also define specific regions $\calR_2^x$ and $\calR_2^h$,
\begin{align*}
	\calR_2^x&=\ensB{x\in \bbR^n: \pscal{x,\esp{\nabla f(x)}}\geq \frac{1}{500}\normm{x}^2\normm{\avx}^2+\frac{1}{100}\normm{x}^4 },\\
	\calR_2^h=&\ensB{x\in\bbR^n: \pscal{d_x,\esp{\nabla f(x)}}\geq\frac{1}{250}\normm{x}\normm{d_x}\normm{\avx}^2,\frac{11}{20}\normm{\avx}\leq\normm{x}\leq\normm{\avx},\dist(x,\overline{\calX})\geq\frac{\normm{\avx}}{3}}. 
\end{align*}
where $d_x=\frac{\pm\avx-x}{\normm{\pm\avx-x}}$ if $x \neq \pm\avx$ and any vector on the unit sphere otherwise.

Let us observe that these regions are similar to those defined in \cite{sun_geometric_2018} replacing $f$ by $f_{\mathrm{NL}}$.
%\begin{align*}
%&\calR_1=\ensB{x\in\bbR^n: \pscal{\avx,\esp{\nabla^2f_{\mathrm{NL}}(x)}\avx}\leq -\frac{1}{100}\normm{x}^2\normm{\avx}^2-\frac{1}{50}\normm{\avx}^4},\\
%&\calR_3=\ensB{x\in\bbR^n: \dist(x,\overline{\calX})\leq \frac{1}{\sqrt{7}}\normm{\avx}},\\
%&\calR_2=(\calR_1\cup\calR_3)^c.
%\end{align*}
Indeed, the idea  behind  our assumptions  is the fact  that small noise will introduce small perturbations  in the function $f$, and therefore under our assumption of small noise, the latter has benign influence on the landscape of $f$ (see Figure~\ref{fig: fonction landscape}). Mainly, in the region $\calR_1$ the function $f$  still has negative curvature. In the region $\calR_2$, $f$ has a large gradient and in $\calR_3$ relative strong convexity with respect to our chosen entropy $\psi$. It is important to observe that in the noisy case, it is not true that the true vectors $\overline{\calX}$ are critical points of $f$ or even $\esp{f}$. However, we have already shown in Lemma~\ref{lem:interm} that $\pm \avx$ are actually $\frac{\normm{\epsi}^2}{m}$-minimizers. Moreover, we have already given in Proposition~\ref{pro: crit_E(f)} a  description of the set of critical points of $\esp{f}$, providing a hint that in the large oversampling regime, the geometry of $f$ is close to that of $f_{\mathrm{NL}}$. This result shows that the set of critical points of $\esp{f}$ is also reduced to the set of strict saddle points with symmetric global minimizers of $\esp{f}$. This set of minimizers, that we denoted $\overline{\calX}_\epsi$ (see Proposition~\ref{pro: crit_E(f)}), are direct perturbations of the true vectors $\overline{\calX}$ by the noise; see also Lemma~\ref{lem:xbar_et_x} which quantifies the distance of global minimizers of $f$ to $\overline{\calX}$ in probability. 
%In sum, the main contribution of this result is the fact that small noise will not change the geometry of the function $f$ provided that the number of samples $m$ is of the same order as the polylogarithmic  order request for $f_{\mathrm{NL}}$ in \cite[Theorem 2.2]{sun_geometric_2018}.  
\begin{figure}[htbp]
	\centering
	\includegraphics[trim={0cm 8cm 0 8cm},clip,width=0.8\linewidth]{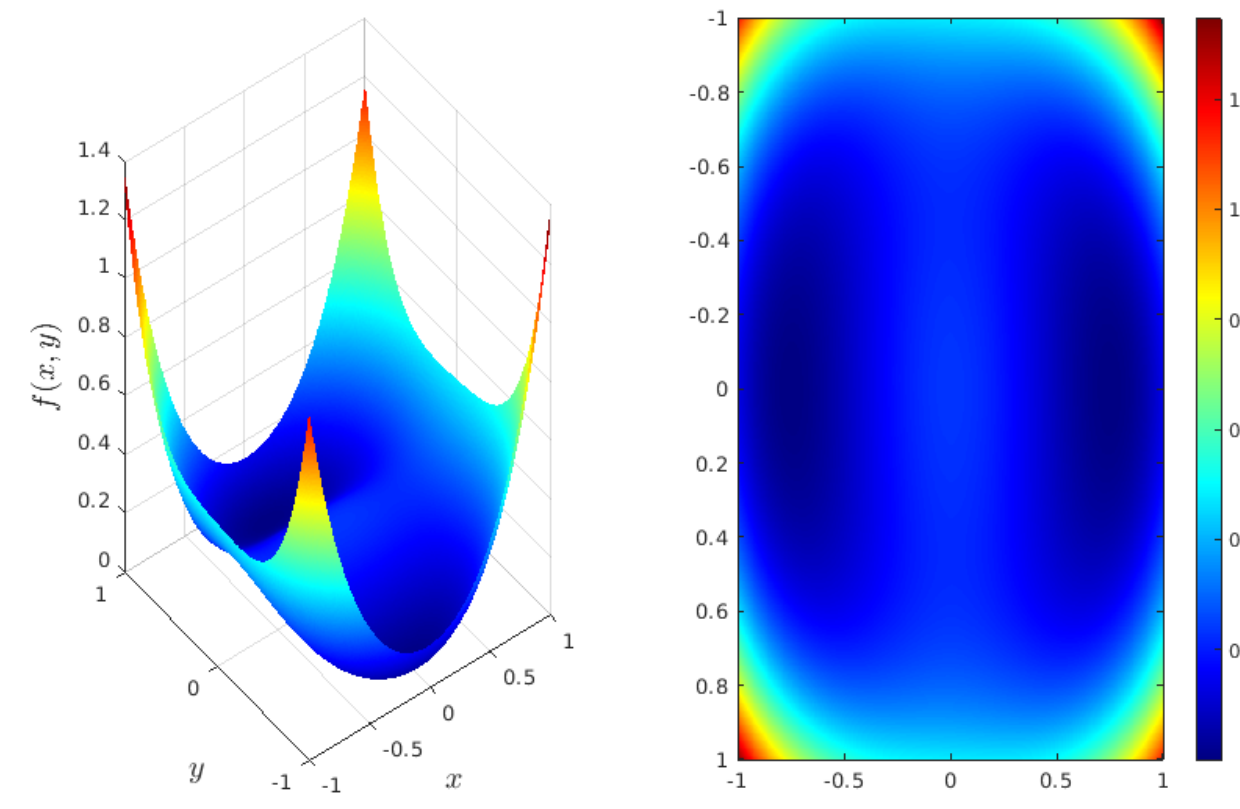}
	\caption{Landscape of the function $f$ as $m\to\infty$; we have $(m,n)=(200,2)$ and the true vectors are $[\pm3/4,0]$. The noise vector is generated at uniform in [-1,1] such that $\widetilde{\epsi}\approx 5. 10^{-3}$.  One  clearly sees that the geometry of the landscape of $f$ is preserved and that the only minimizers of $f$ are  very close to the true vectors. }
	\label{fig: fonction landscape}
\end{figure}

\begin{proof} 
	In the following, all assertions are to be understood in high probability sense. The proof consists in invoking properly the statements of Proposition~\ref{pro:Noisyland}. 
	In the region $\calR_1$, Proposition~\ref{pro:Noisyland}-\ref{pro:Noisyland:item1} shows that 
	\begin{equation}\label{eq:str_R1}
	\forall x\in \calR_1, \quad \pscal{\avx,\nabla^2f(x)\avx}\leq-\frac{1}{100}\pa{1-c_s}\normm{\avx}^4,
	\end{equation}
	\ie $f$ has a negative curvature in the direction of the true vectors $\xoverline{\calX}$ which means that any critical point in $\calR_1$ is a strict saddle point for $f$. From Proposition~\ref{pro:Noisyland}-\ref{pro:Noisyland:item3} and \ref{pro:Noisyland:item4}, we deduce that 
	\begin{equation}\label{eq:prooftheo}
	\forall x\in \calR_2^x\cup \calR_2^{h}, \quad \normm{\nabla f(x)} \geq \frac{1}{500}\pa{1-c_s }\normm{x}\normm{\avx}^2. 
	\end{equation}
	Moreover, Proposition~\ref{pro:Noisyland}-\ref{pro:Noisyland:item5} entails that  $\calR_2\subset\calR_2^x\cup\calR_2^h$ which means that \eqref{eq:prooftheo} holds true for all $x\in\calR_2$. Thus the gradient of function $f$ is bounded away from zero on $\calR_2$ which means that there are no critical points in this region. 
	%\paragraph{\ref{thm: Landscapethm-i}} When $m \gtrsim n\log(n)$, let us assume that there exists a point $\xpa\in\degcrit(f)$. Proposition~\ref{pro:Noisyland}-\ref{pro:Noisyland:item1}-\ref{pro:Noisyland:item2}-\ref{pro:Noisyland:item3}-\ref{pro:Noisyland:item5}
	%implies that $\xpa\notin \calR_1\cup\calR_2^x\cup\calR_3$ therefore $\xpa\in\calR_2^h$. The latter implies that
	%\begin{align*}
	% \normm{\nabla f(\xpa)-\esp{\nabla f(\xpa)}}=\normm{\nabla\esp{f(\xpa)}}\geq\frac{1}{250}\normm{\xpa}\normm{\avx}^2 \geq\frac{\normm{\xpa}^3}{250}. 
	%\end{align*}
	%The conclusion follows.
	%\paragraph{\ref{thm: Landscapethm-ii}}  
	Therefore, local/global minimizers of $f$ are necessarily located in the region $\calR_3$. It remains to show that the only critical points in the domain $\calR_3$ are just the elements of $\Argmin(f)$\footnote{Remember that $\Argmin(f)$ is a nonempty compact set by injectivity of $A$ under the assumed measurement bound.} which contains only two points $\pm \xsol$. This will be a consequence of $\sigma$-strong convexity of $f$ on $\calR_3$. In the following, since $\calR_3 = B(\avx,\rho) \cup B(-\avx,\rho)$, we prove the claim only on $B(\avx,\rho)$ and the same holds for the symmetric case with $-\avx$. Let $x \in B(\avx,\rho)\backslash\{\xsol\}$. In view of Proposition~\ref{pro:Noisyland}-\ref{pro:Noisyland:item2}, we have
	\begin{align*}
		D_{f}(x,\xsol)&= f(x) - \min f \geq \sigma D_{\psi}(x,\xsol) \geq \frac{\sigma}{2} \normm{x-\xsol}^2 .
	\end{align*} 
	The right hand side is positive since $x \neq \xsol$, which means that $f$ has a unique minimizer on $B(\avx,\rho)$. Moreover,
	\begin{align*}
		D_{f}(\xsol,x)&= \min f - f(x) - \pscal{\nabla f(x),\xsol-x} \geq \sigma D_{\psi}(\xsol,x) \geq \frac{\sigma}{2} \normm{x-\xsol}^2 ,
	\end{align*} 
	and thus
	\[
	\pscal{\nabla f(x),x-\xsol} \geq D_{f}(\xsol,x) \geq \frac{\sigma}{2} \normm{x-\xsol}^2 .
	\]
	Cauchy-Schwarz then entails
	\[
	\normm{\nabla f(x)} \geq \frac{\sigma}{2} \normm{x-\xsol} > 0 
	\]
	meaning that $f$ has no other critical point than $\xsol$ on $B(\avx,\rho)$. This completes the proof.
\end{proof}

The proof of the above result heavily relies on the behaviour of $f$ on each region. This is the subject of the next proposition.

%\todo{Provide a bit more details on the concentration inequalities for the gradient terms.}

\begin{proposition}\label{pro:Noisyland} If the number of samples obeys $m\gtrsim n\log^3(n)$ then with probability $1-\frac{c}{m}$ where $c$ is a positive numerical constant, we have the following statements. 
	\begin{enumerate}[label=(\roman*)]
		\item \label{pro:Noisyland:item1} 
		In the region $\calR_1,$ the objective $f$  has a negative curvature \ie,
		\begin{equation}\label{eq:negcurv}
		\forall x\in\calR_1,\quad\pscal{\avx,\nabla^2f(x)\avx}\leq-\frac{1}{100}\pa{1-c_s}\normm{\avx}^4.
		\end{equation}
		\item \label{pro:Noisyland:item2} 
		In $\calR_3$, $f$ is $\sigma$-strongly convex where $\sigma > 0$ is given in Proposition~\ref{pro:Localconvex_G}.
		
		\item \label{pro:Noisyland:item3} 
		The gradient is bounded from away from zero in  $\mathcal{R}_2^x$. More precisely, 
		\begin{equation}
		\forall x\in\calR_2^x,\quad  \pscal{x,\nabla f(x)} \geq \frac{1}{1000}\pa{1-c_s }\normm{x}^2\normm{\avx}^2.
		\end{equation}
		\item \label{pro:Noisyland:item4}  
		We have
		\begin{equation}
		\forall x\in\calR^h_2,\quad  \pscal{d_x,\nabla f(x)}\geq\frac{1}{500}\pa{1-c_s }\normm{\avx}^2\normm{x}\normm{d_x}.
		\end{equation}
		\item \label{pro:Noisyland:item5} We have $
		\calR_2\subset\calR_2^x\cup\calR_2^h.$
	\end{enumerate}
\end{proposition}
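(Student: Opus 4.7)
The plan is to decompose $f = f_{\mathrm{NL}} + f_{\mathrm{Ny}}$ as in \eqref{eq:defsum} and transfer, region by region, the noiseless geometric estimates of \cite{sun_geometric_2018} for $f_{\mathrm{NL}}$ to the full noisy objective $f$, while controlling the noisy perturbation $f_{\mathrm{Ny}}$ uniformly in $x$ using Assumption~\ref{ass:SNR}. The key observation is that $\nabla f_{\mathrm{Ny}}(x) = -\bigl(\tfrac{1}{m}\sum_r\epsi[r]a_r\transp{a_r}\bigr)x$ and $\nabla^2 f_{\mathrm{Ny}}(x) = -\tfrac{1}{m}\sum_r\epsi[r]a_r\transp{a_r}$, so a single operator-norm bound -- exactly the one already obtained in \eqref{eq:2ndconc} via \cite[Lemma~B.4]{godeme2023provable} -- yields uniform-in-$x$ control of both the noisy gradient and the noisy Hessian by a multiple of $\norm{\epsi}{\infty} \leq c_s\min(\normm{\avx}^2,1)$.

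For claim~\ref{pro:Noisyland:item1} I would start from Lemma~\ref{lem:conhess_G}, which with the claimed probability gives
\[
\pscal{\avx,\nabla^2 f(x)\avx} \leq \pscal{\avx,\esp{\nabla^2 f(x)}\avx} + \vrho\bigl(\normm{x}^2+\tfrac{\normm{\avx}^2}{3}+\norm{\epsi}{\infty}\bigr)\normm{\avx}^2 .
\]
On $\calR_1$ the first summand is by definition at most $-\tfrac{1}{100}\normm{x}^2\normm{\avx}^2 - \tfrac{1}{50}\normm{\avx}^4$, so taking $\vrho$ small enough absorbs the deviation term while Assumption~\ref{ass:SNR} absorbs the $\norm{\epsi}{\infty}$ piece into the factor $(1-c_s)$, producing the claimed negative curvature. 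Claim~\ref{pro:Noisyland:item2} is immediate by applying Lemma~\ref{pro:Localconvex_G} on each of the two balls $B(\pm\avx,\rho)$ whose union is $\calR_3$.

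Claims~\ref{pro:Noisyland:item3} and \ref{pro:Noisyland:item4} are the stochastic heart of the proposition and follow the noiseless template of \cite[Section~7]{sun_geometric_2018}: an $\epsilon$-net plus chaining argument for the non-Lipschitz processes $x\mapsto\pscal{x,\nabla f_{\mathrm{NL}}(x)}$ and $x\mapsto\pscal{d_x,\nabla f_{\mathrm{NL}}(x)}$ uniformly produces the required sharpened inequalities on $\calR_2^x$ and $\calR_2^h$, at the price of the heavier sample complexity $m\gtrsim n\log^3 n$. The noisy contribution satisfies $\abs{\pscal{v,\nabla f_{\mathrm{Ny}}(x)}} \leq (1+\vrho)\norm{\epsi}{\infty}\normm{x}\normm{v} \leq (1+\vrho)c_s\min(\normm{\avx}^2,1)\normm{x}\normm{v}$ uniformly in $x,v$, so it is dominated by the noiseless lower bounds and yields the promised $(1-c_s)$ factor. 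Claim~\ref{pro:Noisyland:item5} is purely deterministic and follows as in \cite[Proposition~7.17]{sun_geometric_2018}, because $\calR_1, \calR_2^x, \calR_2^h$ are defined directly through $\esp{\nabla^2 f}$ or $\esp{\nabla f}$ and the inclusion reduces to elementary geometry in $\bbR^n$ involving only the explicit $\widetilde{\epsi}$-terms already present in those expectations.

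The main obstacle I anticipate is the bookkeeping of numerical constants so that the admissible ranges of $\vrho$ and $c_s$ dictated by Assumption~\ref{ass:SNR} are precisely what is needed for the noisy contribution to be strictly dominated by the noiseless curvature/gradient lower bounds and for the final inequalities to take the form $\tfrac{1}{C}(1-c_s)\times(\text{signal scale})$ with the same universal constants $C$ as in the noiseless analysis. Because the heavy probabilistic machinery (the uniformisation of the non-Lipschitz gradient process) is inherited unchanged from \cite{sun_geometric_2018}, no new concentration ingredient beyond Lemma~\ref{lem:conhess_G} and \eqref{eq:2ndconc} is required; the $n\log^3 n$ sample complexity is simply that already demanded by the noiseless argument.
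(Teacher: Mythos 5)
Your proposal is correct and matches the paper's own proof in all essentials: the same decomposition $f=f_{\mathrm{NL}}+f_{\mathrm{Ny}}$ with a single operator-norm bound \eqref{eq:2ndconc} controlling the noisy gradient and Hessian uniformly, Lemma~\ref{lem:conhess_G} with the definition of $\calR_1$ (the paper fixes $\vrho=1/100$) for claim~\ref{pro:Noisyland:item1}, Lemma~\ref{pro:Localconvex_G} for claim~\ref{pro:Noisyland:item2}, the inherited uniformization machinery of \cite{sun_geometric_2018} (which is where $m\gtrsim n\log^3 n$ enters, needed only for claim~\ref{pro:Noisyland:item4}; claim~\ref{pro:Noisyland:item3} gets by with the Hessian-type concentration alone) for the gradient lower bounds, and a deterministic covering with explicit $(\alpha,\beta,\eps)$-bookkeeping of the $\widetilde{\epsi}$-terms for claim~\ref{pro:Noisyland:item5}. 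The constant-tracking you flag as the main obstacle is indeed exactly what the paper carries out, via the auxiliary region $\calR_2^{h'}$ and the covering sets $\calR_a,\ldots,\calR_d$ with $\eps\in\left[0,\tfrac{1}{9\sqrt{2}}\right]$.
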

\begin{remark} The previous result extends the series of propositions (\cite[Proposition~2.3-2.7]{sun_geometric_2018}) to the noisy case. All the statements depend on the (inverse) signal-to-noise coefficient $c_s$ which obviously less than $1$ under our assumption.  In the noiseless case, let us  observe that we recover all the Propositions mentioned above. 
\end{remark}

\begin{proof}$~$
	\begin{enumerate}[label=(\roman*)] 
		
		\item[\ref{pro:Noisyland:item1}] For any $x\in\mathcal{R}_1$, we have 
		\begin{equation*}
			\pscal{\avx,\nabla^2f(x)\avx}=\som{3\abs{\transp{a_r}x}^2\abs{\transp{a_r}\avx}^2}-\som{\abs{\transp{a_r}\avx}^4}-\som{\epsi[r]\abs{\transp{a_r}\avx}^2}.
		\end{equation*}
		By using similar concentration inequalities as in Lemma~\ref{lem:conhess_G} we have the following 
		\begin{align*}
			&\som{3\abs{\transp{a_r}x}^2\abs{\transp{a_r}\avx}^2}\leq \esp{\som{3\abs{\transp{a_r}x}^2\abs{\transp{a_r}\avx}^2}}+\vrho\normm{\avx}^2\normm{x}^2,\\
			&\som{\abs{\transp{a_r}\avx}^4}\geq \esp{\som{\abs{\transp{a_r}\avx}^4}}-\vrho\normm{\avx}^4,\\
			& \som{\epsi[r]\abs{\transp{a_r}x}^2}\geq \esp{\som{\epsi[r]\abs{\transp{a_r}x}^2}}-\vrho\norm{\epsi}{\infty}\normm{x}^2.
		\end{align*}
		After summing, we get
		\begin{equation*}
			\pscal{\avx,\nabla^2f(x)\avx}\leq \pscal{\avx,\esp{\nabla^2f(x)}\avx}+\vrho\normm{x}^2\normm{\avx}^2+\vrho\normm{\avx}^4+\vrho c_s\normm{\avx}^2\normm{x}^2.
		\end{equation*}
		We choose now  $\vrho=\frac{1}{100}$,  and  since $x\in\calR_1$, we finally obtain that 
		\begin{align*}
			\pscal{\avx,\nabla^2f(x)\avx}&\leq-\frac{1}{100}\normm{x}^2\normm{\avx}^2-\frac{1}{50}\normm{\avx}^4+ \frac{1}{100}\normm{x}^2\normm{\avx}^2+\frac{1}{100}\normm{\avx}^4+\frac{1}{100} c_s\normm{\avx}^2\normm{x}^2,\\
			&=-\frac{1}{100}\pa{1-c_s}\normm{\avx}^4. 
		\end{align*}
		
		\item[\ref{pro:Noisyland:item2}] Combine Lemma~\ref{pro:Localconvex_G} and $1-$strong convexity of $\psi$. 
		
		\item[\ref{pro:Noisyland:item3}]Let $x\in\calR_2^x,$
		\begin{align*}
			\pscal{x,\nabla f(x)}=\som{\abs{\transp{a_r}x}^4}-\som{\abs{\transp{a_r}\avx}^2\abs{\transp{a_r}x}^2}-\som{\epsi[r]\abs{\transp{a_r}x}^2}.
		\end{align*}
		using the same concentration arguments as in the proof of Lemma~\ref{lem:conhess_G}, we get 
		\begin{align*}
			\pscal{x,\nabla f(x)}\geq&\pscal{x,\esp{\nabla f(x)}}-\frac{1}{100}\normm{x}^4-\frac{1}{1000}\normm{x}^2\normm{\avx}^2-\frac{\normm{x}^2\norm{\epsilon}{\infty}}{1000},\\
			\geq& \frac{1}{500}\normm{x}^2\normm{\avx}^2+\frac{1}{100}\normm{x}^4 -\frac{1}{100}\normm{x}^4-\frac{1}{1000}\normm{x}^2\normm{\avx}^2-\frac{\normm{x}^2\norm{\epsilon}{\infty}}{1000},\\
			=&\frac{1}{1000}(1-c_s)\normm{\avx}^2\normm{x}^2 ,
		\end{align*}  
		where we used Assumption~\ref{ass:SNR} in the last inequality.
		\item[\ref{pro:Noisyland:item4}] We have, 
		\begin{align*}
			\pscal{d_x,\nabla f(x)}=\pscal{d_x,\nabla f_{\mathrm{NL}}(x)}+\pscal{d_x,\nabla f_{\mathrm{Ny}}(x)}.
		\end{align*}
	On the one hand, we have
	\begin{align*}
		\pscal{d_x,\nabla f_{\mathrm{NL}}(x)}=\som{\abs{\transp{a_r}x}^2\pa{\transp{d_x}a_r\transp{a_r}x}}-\som{\abs{\transp{a_r}\avx}^2\pa{\transp{d_x}a_r\transp{a_r}x}}.
	\end{align*}	
		Therefore, the proof bears exactly the same path as the proof of \cite[Proposition 2.6]{sun_geometric_2018}. Indeed using Lemma~\ref{lem:conhess_G} more precisely \cite[Lemma~B.2]{godeme2023provable}, we have for $m \gtrsim n\log(n)$ 
		\begin{align*}
			\som{\abs{\transp{a_r}\avx}^2\pa{\transp{d_x}a_r\transp{a_r}x}}&\leq \transp{d_x}\esp{\som{\abs{\transp{a_r}\avx}^2a_r\transp{a_r}}}x+\frac{1}{1000}\normm{\avx}^2\normm{d_x}\normm{x}, 
		\end{align*}
	with probability larger than $1-\frac{C}{m}$, $C > 0$.

To bound the first term, we use the line of reasoning used in the proof of \cite[Proposition~2.6]{sun_geometric_2018} to get that when $m \gtrsim n\log(n)^3$, with a probability at least $1-\frac{C'}{m}$, $C' > 0$, we have 	
		\begin{align*}
		\som{\abs{\transp{a_r}x}^2\pa{\transp{d_x}a_r\transp{a_r}x}}&\geq \transp{d_x}\esp{\som{\abs{\transp{a_r}x}^2a_r\transp{a_r}}}x-\frac{1}{1000}\normm{\avx}^2\normm{d_x}\normm{x}, 
	\end{align*}
we refer to \cite[Section~6.5]{sun_geometric_2018} for the details of the proofs. Then, provided that $m \gtrsim n\log(n)^3$, with a union bound, we have with probability at least $1-\frac{C+C'}{m}$ that	
		\begin{align*}
			\pscal{d_x,\nabla f_{\mathrm{NL}}(x)}\geq \pscal{d_x,\esp{\nabla f_{\mathrm{NL}}(x)}}-\frac{1}{500}\normm{\avx}^2\normm{x}\normm{d_x} .
		\end{align*}
		On the other hand, with similar arguments as in the proof of Lemma~\ref{lem:conhess_G} and using again Assumption~\ref{ass:SNR}, we have with probability larger than $1-\frac{C+C'}{m}$
		\begin{align*}
			\pscal{d_x,\nabla f_{\mathrm{Ny}}(x)}&=\transp{d_x}\Ppa{\som{\epsi[r]a_r\transp{a_r}}}x
			%    &\leq \transp{d_x}\Ppa{\widetilde{\epsi}\Id}+\frac{c_s}{500}\normm{\avx}^2x,\\
			\geq{\pscal{d_x,\esp{\nabla f_{\mathrm{Ny}}(x)}}}-\frac{c_s}{500}\normm{\avx}^2\normm{x}\normm{d_x}.
		\end{align*}
		We combine now the last two inequalities to get 
		\begin{align*}
			\pscal{d_x,\nabla f(x)}\geq \pscal{d_x,\esp{\nabla f(x)}}-\frac{1}{500}\normm{\avx}^2\normm{x}\normm{d_x}-\frac{c_s}{500}\normm{\avx}^2\normm{x}\normm{d_x}
		\end{align*}
		with probability at least $1-2\frac{C+C'}{m}$. Using the definition of the region $\calR_2^h$ we obtain the claimed result.
				
		\item[\ref{pro:Noisyland:item5}] The proof is similar to that of \cite[Proposition 2.7]{sun_geometric_2018} which consists of showing that  $\bbR^n=\calR_1\cup\calR_2^x\cup\calR_2^h\cup\calR_3$. We then get out claim by definition of $\calR_2$ and that $\bbR^n=\calR_1\cup\calR_2\cup\calR_3$. The idea is to divide the $\bbR^n$ into several overlapping regions and show that we can cover them with our good partition. To achieve this task we will use the set
		\begin{align*}
			\calR_2^{h'}=\ensB{x\in\bbR^n: \pscal{d_x,\esp{\nabla f(x)}}\geq \frac{1}{250}\normm{\avx}^2\normm{x}\normm{d_x}, \normm{x}\leq\normm{\avx}}.
		\end{align*}
		
		%eq:prat_bnd
		\begin{itemize}
			\item We can cover the set $\calR_a\eqdef\ensB{x\in\bbR^n:|\transp{x}\avx|\leq\frac{1}{2}\normm{x}\normm{\avx}}$ with both $\calR_1$ and $\calR_2^x$. If $\normm{x}^2\leq\frac{298}{451}\normm{\avx}^2$
			\begin{align*}
				\pscal{\avx,\nabla^2f(x)\avx}+\frac{1}{100}\normm{\avx}^2\normm{x}^2&=6\pa{\transp{x}\avx}^2+\frac{301}{300}\normm{x}^2\normm{\avx}^2-3\normm{\avx}^4-\widetilde{\epsi}\normm{\avx}^2\\
				&\leq\Ppa{\frac{3}{2}+\frac{301}{100}}\normm{\avx}^2\normm{x}^2-\frac{149}{50}\normm{\avx}^4-\widetilde{\epsi}\normm{\avx}^2-\frac{1}{50}\normm{\avx}^4\\
				&\leq \frac{298}{100}\normm{\avx}^4-\frac{149}{50}\normm{\avx}^4-\widetilde{\epsi}\normm{\avx}^2-\frac{1}{50}\normm{\avx}^4\\
				&\leq \frac{1}{50}\normm{\avx}^4.
			\end{align*}
			If $\normm{x}^2\geq\frac{626}{995}\normm{\avx}^2$, 
			\begin{align*}
				\pscal{x,\esp{\nabla f(x)}}-\frac{1}{500}\normm{x}^2\normm{\avx}^2&=3\normm{x}^4-2\pa{\transp{x}\avx}^2-\frac{501}{500}\normm{\avx}^2\normm{x}^2-\widetilde{\epsi}\normm{x}^2\\
				&\geq 3\normm{x}^4-\frac{1}{2}\normm{x}^2\normm{\avx}^2-\frac{501}{500}\normm{\avx}^2\normm{x}^2-\widetilde{\epsi}\normm{x}^2\\
				&\geq \frac{1}{100}\normm{x}^4+\frac{299}{100}\normm{x}^4-\frac{751}{500}\normm{\avx}^2\normm{x}^2-\widetilde{\epsi}\normm{x}^2\\
				&\geq \frac{1}{100}\normm{x}^4+\frac{299}{100}\normm{x}^4-\Ppa{\frac{751}{500}+\frac{1}{9\sqrt{2}}}\Ppa{\frac{995}{626}}\normm{x}^4\\
				&\geq \frac{1}{100}\normm{\avx}^4,
			\end{align*}
			where we have used the fact that $\widetilde{\epsi}\geq 0$  combined with the practical upper bound on $c_s$ \eqref{eq:prat_bnd}. Since $\frac{298}{451}\geq \frac{626}{995}$, we  conclude that $\calR_a\subset\calR_1\cup\calR_2^x$.
			\item The set $\calR_b\eqdef\ensB{x\in\bbR^n: |\transp{x}\avx|\geq\frac{1}{2}\normm{x}\normm{\avx}; \normm{x}\leq\frac{57}{100}\normm{\avx}}$ is covered by the set $\calR_1$. Indeed for any $x\in\calR_b$ we have, 
			\begin{align*}
				\pscal{\avx,\nabla^2f(x)\avx}+\frac{1}{100}\normm{\avx}^2\normm{x}^2&=6\pa{\transp{x}\avx}^2+\frac{301}{300}\normm{x}^2\normm{\avx}^2-3\normm{\avx}^4-\widetilde{\epsi}\normm{\avx}^2\\
				&\leq\frac{901}{100}\normm{x}^2\normm{\avx}^2-\frac{149}{50}\normm{\avx}^4-\widetilde{\epsi}\normm{\avx}^2-\frac{1}{50}\normm{\avx}^4\\
				&\leq\frac{901}{100}\Ppa{\frac{57}{100}}^2\normm{\avx}^2-\frac{149}{50}\normm{\avx}^4-\widetilde{\epsi}\normm{\avx}^2-\frac{1}{50}\normm{\avx}^4\\
				&\leq -\widetilde{\epsi}\normm{\avx}^2-\frac{1}{50}\normm{\avx}^4\leq -\frac{1}{50}\normm{\avx}^4,
			\end{align*}
			since  $\widetilde{\epsi}\geq0$.
			\item Let consider the set $\calR_c\eqdef\ensB{ x\in\bbR^n:\frac{1}{2}\normm{x}\normm{\avx}\leq|\transp{x}\avx|\leq \frac{99}{100}\normm{x}\normm{\avx}}$, which is covered by $\calR^x_2$ and $\calR_2^{h'}$. For any $x\in \calR_c$ such that $\normm{x}\geq \sqrt{\frac{1996}{1973}}\normm{\avx}$ we have 
			\begin{align*}
				\pscal{x,\esp{\nabla f(x)} }-&\frac{1}{500}\normm{\avx}^2\normm{x}^2+\frac{1}{100}\normm{x}^4\\
				&=3\normm{x}^4-2\pa{\transp{x}\avx}^2-\normm{\avx}^2\normm{x}^2-\widetilde{\epsi}\normm{x}^2-\frac{1}{500}\normm{\avx}^2\normm{x}^2\\
				&\geq \frac{299}{100}\normm{x}^4+\frac{501}{500}\normm{\avx}^2\normm{x}^2-2\pa{\transp{x}\avx}^2-2\pa{\transp{x}\avx}^2-\widetilde{\epsi}\normm{x}^2\\
				&\geq \frac{299}{100}\normm{x}^4-\Ppa{2\Ppa{\frac{99}{100}}^2+\frac{501}{500}+\frac{1}{9\sqrt{2}}}\normm{\avx}^2\normm{x}^2\\
				&\geq \Ppa{\frac{299}{100}\Ppa{\frac{1996}{1973}}^2-\Ppa{2\Ppa{\frac{99}{100}}^2+\frac{501}{500}+\frac{1}{9\sqrt{2}}}}\normm{x}^4\\
				&\geq 0.
			\end{align*}
			Therefore, we have $\calR_c\cap\ensB{x\in\bbR^n:\normm{x}\geq \sqrt{\frac{1996}{1973}}\normm{\avx}}\subset\calR^x_2$. To show the remaining inclusion,  we use an  $(\alpha,\beta)-$type argument. 
			Let assume that $\normm{x}=\alpha\normm{\avx}$, $|\transp{x}\avx|=\beta=\normm{\avx}\normm{x}=\alpha\beta\normm{\avx}$ and $\widetilde{\epsi}=\eps\normm{\avx}^2$ with $\alpha\in\left[\frac{11}{20},\sqrt{\frac{1996}{1973}}\right]$, $\beta\in\left[\frac{1}{2},\frac{99}{100}\right]$ and $\eps\in\left[0,\frac{1}{9\sqrt{2}}\right]$. We have
			\begin{align*}
				\pscal{x-\avx,\esp{\nabla f(x)}}&=3\normm{x}^4+3\pa{\transp{x}\avx}\Ppa{\normm{\avx}^2-\normm{x}^2}-2\pa{\transp{x}\avx}^2-\normm{\avx}^2\normm{x}^2+\widetilde{\epsi}\Ppa{\pa{\transp{x}\avx}-\normm{x}}\\
				&=\normm{\avx}^4\alpha\Ppa{3\alpha^3+3\beta\pa{1-\alpha^2}-2\alpha\beta^2-\alpha+\eps\pa{\beta-\alpha}} .
			\end{align*}
			Whence we have 
			\begin{multline*}
				\frac{1}{\normm{\avx}^4\alpha}\Ppa{\pscal{x-\avx,\esp{\nabla f(x)}}-\frac{1}{250}\normm{\avx}^2\normm{x}\normm{d_x}}=3\alpha^3+3\beta\pa{1-\alpha^2}-2\alpha\beta^2\\
				-\alpha+\eps\pa{\beta-\alpha}-\frac{1}{250}\sqrt{1+\alpha^2-2\alpha\beta} .
			\end{multline*}
			It is straightforward that in this domain $\frac{1}{250}\sqrt{1+\alpha^2-2\alpha\beta}\leq \frac{1}{250}\sqrt{\frac{3969-2\sqrt{984527}}{1973}}\leq \frac{41}{10000}$. Therefore we  define the following  function
			\[
			p(\alpha,\beta,\eps)\eqdef3\alpha^3+3\beta\pa{1-\alpha^2}-2\alpha\beta^2 -\alpha+\eps\pa{\beta-\alpha}-\frac{41}{10000}
			\]
			Then $p$ has a unique minimizer arising at $\left(0.998237,\frac{99}{100},\frac{1}{9\sqrt{2}}\right)$  with a value  $\frac{87239}{2500000}$. We deduce that
			\begin{align*}
				\pscal{x-\avx,\esp{\nabla f(x)}}-&\frac{1}{250}\normm{\avx}^2\normm{x}\normm{d_x}\geq0
			\end{align*}
			Therefore $\calR_c\subset\calR_2^x\cup\calR_2^{h'}$.
			\item We now cover $\calR_d\eqdef\ensB{x\in\bbR^n:\frac{99}{100}\normm{\avx}\normm{x}\leq |\transp{x}\avx|\leq\normm{x}\normm{\avx}, \normm{x}\geq\frac{11}{20}\normm{\avx}}$ with $\calR_2^x,\calR_3$ and $\calR_2^{h'}$. 
			For any $x\in\calR_d,$ with $\normm{x}\geq\sqrt{\frac{1031}{1000}}\normm{\avx}$, we have 
			\begin{align*}
				\pscal{x,\esp{\nabla f(x)}}-\frac{1}{500}\normm{x}^2\normm{\avx}^2-&\frac{1}{100}\normm{\avx}^4=\frac{299}{100}\normm{x}^4-2\pa{\transp{x}\avx}^2-\frac{501}{500}\normm{\avx}^2\normm{x}^2-\widetilde{\epsi}\normm{x}^2\\
				&\geq \frac{299}{100}\normm{x}^4-\frac{1501}{500}\normm{\avx}^2\normm{x}^2-\widetilde{\epsi}\normm{x}^2\\
				&\geq \frac{299}{100}\normm{x}^4-\Ppa{\frac{1501}{500}+\frac{1}{9\sqrt{2}}}\normm{\avx}^2\normm{x}^2\\
				&\geq \Ppa{\frac{299}{100}-\Ppa{\frac{1501}{500}+\frac{1}{9\sqrt{2}}}\frac{1000}{1031}}\normm{x}^4 \geq 0.
			\end{align*}
			hence we have $\calR_d\cap\ensB{x\in\bbR^n:\normm{x}\geq\sqrt{\frac{1031}{1000}}\normm{\avx}}\subset \calR_2^x$.
			When $\frac{23}{25}\leq\normm{x}\leq\sqrt{\frac{1031}{1000}}$ we have, 
			\begin{align*}
				\normm{d_x}^2&=\normm{\avx}^2+\normm{x}^2-2\transp{x}\avx=\normm{\avx}^2\Ppa{1+\alpha^2-2\alpha\beta}, 
			\end{align*}
			where we have $\alpha\in\left[\frac{23}{25},\sqrt{\frac{1031}{1000}}\right]$ and $\beta\in\left[\frac{99}{100},1\right]$.  Therefore, we consider
			\begin{align*}
				p(\alpha,\beta,\lambda)=1+\alpha^2-2\alpha\beta-\frac{(1-\lambda)^2}{3},
			\end{align*}
			with $\lambda\in\left]\frac{1}{9\sqrt{2}},\frac{3}{5}\right]$. The maximum value of $p$ is taken at $(\alpha,\beta,\lambda)=\left(\frac{23}{25},\frac{99}{100},\frac{3}{5}\right)$ thus
			$p(\alpha,\beta,\lambda)\leq-\frac{107}{3750}$. We deduce that $\calR_d\cap\ensB{x\in\bbR^n,\frac{23}{25}\leq\normm{x}\leq\sqrt{\frac{1031}{1000}}}\subset \calR_3$. When $\frac{11}{20}\normm{\avx}\normm{x}\leq\frac{24}{25}\normm{\avx}$, we have 
			\begin{multline*}
				\frac{1}{\normm{\avx}^4\alpha}\Ppa{\pscal{x-\avx,\esp{\nabla f(x)}}-\frac{1}{250}\normm{\avx}^2\normm{x}\normm{d_x}}=3\alpha^3+3\beta\pa{1-\alpha^2}-2\alpha\beta^2\\
				-\alpha+\eps\pa{\beta-\alpha}-\frac{1}{250}\sqrt{1+\alpha^2-2\alpha\beta} 
			\end{multline*}
			where $\alpha\in\left[\frac{11}{20},\sqrt{\frac{24}{25}}\right]$, $\beta\in\left[\frac{99}{100},1\right]$ and $\eps\in\left[0,\frac{1}{9\sqrt{2}}\right].$ One check easily that 
			\[
			p(\alpha,\beta,\eps)=3\alpha^3+3\beta\pa{1-\alpha^2}-2\alpha\beta^2
			-\alpha+\eps\pa{\beta-\alpha}-\frac{1}{250}\sqrt{1+\alpha^2-2\alpha\beta}\geq 0.
			\]
			Consequently, we have $\calR_d\cap\ensB{x\in\bbR^n: \frac{11}{20}\normm{\avx}\normm{x}\leq\frac{24}{25}\normm{\avx}}\subset\calR_2^{h'}$. Finally $\calR_d\subset\calR_2^x\cup\calR_3\cup\calR_2^{h'}$. 
		\end{itemize}
		By construction, we have $\calR_a\cup\calR_b\cup\calR_c\cup\calR_d=\bbR^n$, and therefore 
		\begin{align*}
			\bbR^n&=\calR_a\cup\calR_b\cup\calR_c\cup\calR_d\\
			&\subset \calR_1\cup\calR_2^x\cup\calR_2^{h'}\cup\calR_3\\
			&= \calR_1\cup\calR_2^x\cup\Ppa{\calR_2^{h'}\cap\ensB{x\in\bbR^n:\frac{11}{20}\normm{\avx}\leq \normm{x}}}\cup\calR_3\\
			&= \calR_1\cup\calR_2^x\cup\Ppa{\calR_2^{h'}\cap\ensB{x\in\bbR^n:\frac{11}{20}\normm{\avx}\leq \normm{x}}\cap\calR_3^c}\cup\calR_3\\
			&= \calR_1\cup\calR_2^x\cup\calR_2^h\cup\calR_3.
		\end{align*}
	\end{enumerate} 
\end{proof}                                                      
    
\end{appendices}

\begin{acknowledgments}
The authors thank the French National Research Agency (ANR) for funding the project FIRST (ANR-19-CE42-0009). 
\end{acknowledgments}

\bibliographystyle{plain}
{
\bibliography{biblio/biblio3}

\begin{thebibliography}{10}

\bibitem{amra_instantaneous_2018}
C.~Amra, M.~Zerrad, S.~Liukaityte, and M.~Lequime.
\newblock Instantaneous one-angle white-light scatterometer.
\newblock {\em Opt. Express, OE}, 26(1):204--219, January 2018.

\bibitem{Bauschke19}
H.~H. Bauschke, J.~Bolte, J.~Chen, M.~Teboulle, and X.~Wang.
\newblock On linear convergence of non-euclidean gradient methods without
  strong convexity and {Lipschitz} gradient continuity.
\newblock {\em Journal of Optimization Theory and Applications},
  182(3):1068--1087, 2019.

\bibitem{bauschke_descent_2016}
H.~H. Bauschke, J.~Bolte, and M.~Teboulle.
\newblock A descent lemma beyond {Lipschitz} gradient continuity: First-order
  methods revisited and applications.
\newblock {\em Mathematics of Operations Research}, page~20, 2016.

\bibitem{birnbaum2011distributed}
B.~Birnbaum, N.~R. Devanur, and L.~Xiao.
\newblock Distributed algorithms via gradient descent for fisher markets.
\newblock In {\em Proceedings of the 12th ACM conference on Electronic
  commerce}, pages 127--136, 2011.

\bibitem{bolte_first_2017}
J.~Bolte, S.~Sabach, M.~Teboulle, and Y.~Vaisbourd.
\newblock First order methods beyond convexity and {Lipschitz} gradient
  continuity with applications to quadratic inverse problems.
\newblock {\em SIAM J. Optim.}, 28(3):2131--2151, 2018.

\bibitem{candes_solving_2014}
E.~Cand{\`e}s and X.~Li.
\newblock Solving quadratic equations via phaselift when there are about as
  many equations as unknowns.
\newblock {\em Found. Comput. Math.}, 2014.

\bibitem{Candes_WF_2015}
E.~Cand{\`e}s, X.~Li, and M.~Soltanolkotabi.
\newblock Phase retrieval via {W}irtinger flow: Theory and algorithms.
\newblock {\em IEEE Trans. Inform. Theory}, 61(4):1985--2007, 2015.

\bibitem{candes_phaselift_2013}
E.~Cand\'es, T.~Strohmer, and V.~Voroninski.
\newblock {PhaseLift}: Exact and stable signal recovery from magnitude
  measurements via convex programming.
\newblock {\em Communications on Pure and Applied Mathematics},
  66(8):1241--1274, 2013.

\bibitem{chen_solving_2017}
Y.~Chen and E.~Cand{\`e}s.
\newblock Solving random quadratic systems of equations is nearly as easy as
  solving linear systems.
\newblock {\em Comm. Pure Appl. Math.}, 70(5):822--883, May 2017.

\bibitem{DavisKahan70}
C.~Davis and W.~M. Kahan.
\newblock The rotation of eigenvectors by a perturbation. iii.
\newblock {\em SIAM J. Numer. Anal.}, 7:1--4, 1970.

\bibitem{davis_nonsmooth_2020}
D.~Davis, D.~Drusvyatskiy, and C.~Paquette.
\newblock The nonsmooth landscape of phase retrieval.
\newblock {\em IMA Journal of Numerical Analysis}, 40(4):2652--2695, October
  2020.

\bibitem{demanet_stable_2013}
L.~Demanet and P.~Hand.
\newblock Stable {Optimizationless} {Recovery} from {Phaseless} {Linear}
  {Measurements}.
\newblock {\em MIT web domain}, November 2013.

\bibitem{fannjiang_numerics_2020}
A.~Fannjiang and T.~Strohmer.
\newblock The numerics of phase retrieval.
\newblock {\em Acta Numerica}, 29:125--228, May 2020.

\bibitem{gao_stable_2016}
B.~Gao, Y.~Wang, and Z.~Xu.
\newblock Stable {Signal} {Recovery} from {Phaseless} {Measurements}.
\newblock {\em J Fourier Anal Appl}, 22(4):787--808, August 2016.

\bibitem{godeme2023provable}
J.-J. Godeme, J.~Fadili, X.~Buet, M.~Zerrad, M.~Lequime, and C.~Amra.
\newblock Provable phase retrieval with mirror descent.
\newblock {\em SIAM J. Imaging Sci.}, 16(3):1106--1141, September 2023.

\bibitem{JaganathanReview16}
K.~Jaganathan, Eldar~Y. C., and B.~Hassibi.
\newblock Phase retrieval: An overview of recent developments.
\newblock In A.~Stern, editor, {\em Optical Compressive Imaging}. CRC Press,
  2016.

\bibitem{krahmer_complex_2020}
F.~Krahmer and D.~St{\"o}ger.
\newblock Complex {Phase} {Retrieval} from {Subgaussian} {Measurements}.
\newblock {\em J. Fourier Anal Appl}, 26(6):89, November 2020.

\bibitem{lu2018relatively}
H.~Lu, R.~M. Freund, and Y.~Nesterov.
\newblock Relatively smooth convex optimization by first-order methods, and
  applications.
\newblock {\em SIAM Journal on Optimization}, 28(1):333--354, 2018.

\bibitem{luke_phase_2017}
D.~R. Luke.
\newblock Phase {Retrieval}, {What}{\textquoteright}s {New}?
\newblock {\em SIAG/OPT Views and News}, 25(1):1--6, 2017.

\bibitem{huang_performance_2021}
Huang M. and Xu~Z.
\newblock Performance bound of the intensity-based model for noisy phase
  retrieval, 2021.

\bibitem{netrapalli_phase_2015}
P.~Netrapalli, P.~Jain, and S.~Sanghavi.
\newblock Phase retrieval using alternating minimization.
\newblock {\em IEEE Transactions on Signal Processing}, 63(18):4814--4826,
  2015.

\bibitem{rockafellar_convex_1970}
R.~T. Rockafellar.
\newblock {\em Convex Analysis}.
\newblock Princeton University Press, 1970.

\bibitem{Sahinoglou91}
H.~Sahinoglou and S.~Cabrera.
\newblock On phase retrieval of finite-length sequences using the initial time
  sample.
\newblock {\em IEEE Transactions on Circuits and Systems}, 38(5):954--958,
  1991.

\bibitem{shechtman_phase_2014}
Y~Shechtman, Y.~C. Eldar, O.~Cohen, H.~N. Chapman, J.~Miao, and M.~Segev.
\newblock Phase retrieval with application to optical imaging: A contemporary
  overview.
\newblock {\em IEEE Signal Processing Magazine}, 32(3):87--109, May 2015.

\bibitem{Silveti22}
A.~Silveti-Falls, C.~Molinari, and J.~Fadili.
\newblock A stochastic {Bregman} primal-dual splitting algorithm for composite
  optimization.
\newblock {\em Pure and Applied Functional Analysis (special issue in honor of
  L. Bregman)}, 2022.

\bibitem{sun_geometric_2018}
J.~Sun, Q.~Qu, and J.~Wright.
\newblock A geometric analysis of phase retrieval.
\newblock {\em Found Comput Math}, 18(5):1131--1198, October 2018.

\bibitem{Teboulle18}
M.~Teboulle.
\newblock A simplified view of first order methods for optimization.
\newblock {\em Mathematical Programming}, 170(1):67--96, 2018.

\bibitem{vaswani_non-convex_2020}
N.~Vaswani.
\newblock Non-convex structured phase retrieval.
\newblock {\em arXiv:2006.13298 [cs, eess, math, stat]}, June 2020.

\bibitem{waldspurger_phase_2018}
I.~Waldspurger.
\newblock Phase retrieval with random gaussian sensing vectors by alternating
  projections.
\newblock {\em IEEE Transactions on Information Theory}, 64(5):3301--3312, May
  2018.

\bibitem{wang_solving_2017}
G.~Wang, G.~B. Giannakis, and Y.~C. Eldar.
\newblock Solving systems of random quadratic equations via truncated amplitude
  flow.
\newblock {\em arXiv:1605.08285 [cs, math, stat]}, August 2017.

\bibitem{xia_performance_2024}
Y.~Xia and Z.~Xu.
\newblock The performance of the amplitude-based model for complex phase
  retrieval.
\newblock {\em Information and Inference: A Journal of the IMA}, 13(1), 01
  2024.

\bibitem{zhang_nonconvex_2017}
H.~Zhang, Y.~Liang, and Y.~Chi.
\newblock A nonconvex approach for phase retrieval: Reshaped {Wirtinger} flow
  and incremental algorithms.
\newblock {\em Journal of Machine Learning Research}, 18(141):1--35, 2017.

\end{thebibliography}
}

\end{document}